\newcommand{\BibTeX}{{\scshape Bib}\kern-.08em\TeX}
\newcommand{\T}{\S\kern .15em\relax }
\newcommand{\AMS}{$\mathcal{A}$\kern-.1667em\lower.5ex\hbox
        {$\mathcal{M}$}\kern-.125em$\mathcal{S}$}
\DeclareMathOperator{\des}{des}
\DeclareMathOperator{\sudeg}{\mathrm{su}\widehat{\deg}_{\mathrm{n}}}
\DeclareMathOperator{\udeg}{\mathrm{u}\widehat{\deg}_{\mathrm{n}}}
\DeclareMathOperator{\ndeg}{\widehat{\deg}_{\mathrm{n}}}
\DeclareMathOperator{\Hom}{Hom}
\DeclareMathOperator{\rang}{rk}
\DeclareMathOperator{\Spec}{Spec}
\DeclareMathOperator{\rk}{rk}
\DeclareMathOperator{\Aut}{Aut}
\DeclareMathOperator{\dega}{\widehat{\deg}}
\DeclareMathOperator{\sudegg}{\mathrm{sudeg}}
\DeclareMathOperator{\udegg}{\mathrm{udeg}}
\def\degi#1{\widehat{\deg}{}_{\mathrm{n}}^{(#1)}}
\def\Z{\mathbb{Z}}
\def\PP{\mathbb{P}}
\def\Q{\mathbb{Q}}
\def\R{\mathbb{R}}
\def\C{\mathbb{C}}
\def\cX{\mathcal{X}}
\def\olE{\overline{E}}
\def\olF{\overline{F}}
\def\olG{\overline{G}}
\def\olK{\overline{K}}
\def\olV{\overline{V}}
\def\olL{\overline{L}}
\def\olM{\overline{M}}
\def\cO{\mathcal{O}}
\def\cQ{\mathcal{Q}}
\def\OK{{\mathcal{O}_K}}
\def\mua{\hat{\mu}}
\def\cf{\emph{cf.}\,}
\title[Tensor products in Arakelov geometry]{Concerning the semistability of \\ tensor products in Arakelov geometry}
\date{\today}
\author{Jean-Beno\^{i}t Bost}
\address{Universit\'e Paris Sud -- Orsay, D\'epartement de Math\'ematiques}
\email{jean-benoit.bost@math.u-psud.fr}
\author{Huayi Chen}
\address{Institut de Math\'ematiques de Jussieu, Universit\'e Paris Diderot and BICMR, Peking University}
\email{chenhuayi@math.jussieu.fr}
\begin{document}
\def\smfbyname{}

\begin{abstract}
We study the semistability of the tensor product of hermitian vector bundles by using the $\varepsilon$-tensor product and the geometric (semi)stability of vector subspaces in the tensor product of two vector spaces.

Notably, for any number field $K$ and any hermitian vector bundles $\olE$ and $\olF$ over $\Spec \OK$, we show that the maximal slopes of $\olE,$ $\olF$, and $\olE \otimes \olF$ satisfy the following inequality :
\[\hat{\mu}_{\max}(\overline E\otimes\overline F)
\leqslant\hat{\mu}_{\max}(\overline E)+\hat{\mu}_{\max}(\overline F)+\frac 12\min\big(\log (\rang E),\log(\rang F) \big).\]

We also prove that, for any $\OK$-submodule $V$ of $E \otimes F$ of rank $\leqslant 4,$ the slope of $\olV$ satisfies:
\[\hat{\mu}(\overline V)\leqslant\hat{\mu}_{\max}(\overline E)+\hat{\mu}_{\max}(\overline F).\] 
This shows that, if $\overline E$ and $\overline F$ are semistable and if $\rang E. \rang F \leqslant 9$, then $\overline E\otimes\overline F$  also is semistable.
\end{abstract}

\begin{altabstract}
Nous \'etudions la semi-stabilit\'e du produit tensoriel de fibr\'es vectoriels hermitiens en utlisant le produit $\varepsilon$-tensoriel et la (semi-)stabilit\'e g\'eom\'etrique des sous-espaces vectoriels dans le produit tensoriel de deux espaces vectoriels.

En particulier, si $K$ d\'esigne un corps de nombres et si  $\olE$ et $\olF$ sont deux fibr\'es vectoriels hermitiens sur $\Spec \OK$, nous montrons que les pentes maximales de  $\olE,$ $\olF$ et $\olE \otimes \olF$ satisfont \`a l'in\'egalit\'e :
\[\hat{\mu}_{\max}(\overline E\otimes\overline F)
\leqslant\hat{\mu}_{\max}(\overline E)+\hat{\mu}_{\max}(\overline F)+\frac 12\min\big(\log (\rang E),\log(\rang F)\big).\]

Nous prouvons aussi que, pour tout  sous $\OK$-module $V$ de $E \otimes F$ de rang  $\leqslant 4,$ la pente de $\olV$ v\'erifie:
\[\hat{\mu}(\overline V)\leqslant\hat{\mu}_{\max}(\overline E)+\hat{\mu}_{\max}(\overline F).\] 
Cela entra\^{\i}ne que, si $\rang E. \rang F \leqslant 9$ et que $\overline E$ et $\overline F$ sont semistables, le produit tensoriel   $\overline E\otimes\overline F$  l'est aussi.

\end{altabstract}

\maketitle

\tableofcontents

\section{Introduction}

\subsection{}\label{debut} This article is devoted to problems  in Arakelov geometry  motivated by classical results in the theory of vector bundles on a projective curve over a field. 

Notably we investigate the arithmetic analogue of the following:

\begin{theo}\label{Thm:NS} Let $k$ be a field of characteristic $0$, and let $E$ and $F$ be two vector bundles on a smooth projective integral curve $C$ over $k$. 

If $E$ and $F$ are semistable, then $E\otimes F$ also is semistable.
\end{theo}

When $k= \C,$ Theorem \ref{Thm:NS} is a straightforward consequence of the existence, first established by Narasimhan and Seshadri \cite{Nara_Se65}, of projectively flat metrics on stable vector bundles (see also \cite{Donaldson83} and \cite{Siu87}, notably Chapter I, Appendix A). This entails Theorem \ref{Thm:NS} for an arbitrary base field $k$ of characteristic zero,  by considering a subfield $k_0$ of $k$ of finite type over $\Q$ over which $C$ and $E,$ and $F$ are defined, and a field embedding of $k_0$ in $\C$   (see for instance \cite{HuybrechtsLehn2010}, Section 1.3, for the relevant invariance property of semistability under extension of the base field). 

Algebraic proofs of Theorem \ref{Thm:NS} that do not rely on the ``analytic crutch'' of special hermitian metrics have been devised by several authors.

Notably such as an algebraic proof may be easily derived from Bogomolov's work on stable vector bundles (\cite{Bogomolov78}; see also \cite{Raynaud81}). Actually the fact that, over curves, Bogomolov's notion of semistable vector bundles (the so-called tensorial semistability) is equivalent to the classical one (in terms of slopes of subbundles) directly implies Theorem \ref{Thm:NS}.

An extension of Theorem \ref{Thm:NS}, concerning semistable $G$-bundles for $G$ an arbitrary reductive group, has been established by Ramanan and Ramanathan \cite{Ramanan_Ramanathan}. Their technique of proof, which like Bogomolov's one relies on geometric invariant theory, turned out to be crucial in the study of  the preservation of semistability by tensor products in various contexts (see notably  \cite{Totaro96}), in particular in Arakelov geometry (\cite{Chen_pm}).

Besides, Theorem \ref{Thm:NS} may be derived from the basic theory of ample and nef line bundles on projective schemes over fields, as pointed out for instance in \cite{Miyaoka87} (see  \emph{loc. cit.}, Section 3, completed by a reference to \cite{Hart2}, Corollary 5.3; see also \cite{Gieseker79}, and the discussion in \cite{LazarsfeldII}, Chapter 6). We shall discuss related arguments in more details in paragraph \ref{geotens} below.

 \subsection{} Recall that Arakelov geometry is concerned with analogues in an arithmetic context of projective varieties over some base field $k$, of vector bundles $F$ over them, and of their $\Z$-valued invariants defined by means of characteristic classes and intersection theory. The objects of interest in Arakelov geometry are projective flat schemes $\cX$ over $\Z$, say regular  for simplicity, and \emph{hermitian vector bundles} over them: by definition, a hermitian vector bundle  over $\cX$ is a pair $\olE := (E, \Vert.\Vert)$  consisting of some vector bundle $E$ over $\cX$ and of some $C^\infty$ hermitian metric $\Vert.\Vert$, invariant under complex conjugation, on the analytic vector bundle $E^{an}_\C$ over the complex manifold $\cX(\C)$, that is deduced from $E$ by the base change $\Z \hookrightarrow \C$ and analytification. Arakelov geometric invariants attached to such data are real numbers.
 
 The simplest instance of schemes $\cX$ as above are ``arithmetic curves'' $\Spec \OK$, defined by the ring of integers $\OK$ of some number field $K$. A hermitian vector bundle $\olE$ over $\Spec \OK$ is precisely the data of some finitely generated projective $\OK$-module $E$, and of a family of hermitian  norms $(\Vert.\Vert_\sigma)_{\sigma: K\hookrightarrow \C}$ on the finite dimensional $\C$-vector spaces $E_\sigma:= E\otimes_{\OK, \sigma}\C$, invariant under complex conjugation.
 
Basic operations, such as $\oplus,$ $\otimes,$ $\Lambda^\bullet,$ may be defined on hermitian vector bundles in an obvious way. Moreover, if $f: \cX' \rightarrow \cX$ is a morphism between $\Z$-schemes as above, and $\olE$ some hermitian vector bundle over $\cX,$ we may form its pull-back $$f^\ast\olE:= (f^\ast E, f_\C^{an \ast}\Vert.\Vert),$$ which is a hermitian vector bundle over $\cX'.$
 
 When $K\hookrightarrow K'$ is an extension of number fields and when $f$ is the morphism
 \begin{equation*}\label{deff}
 f: \Spec \cO_{K'} \longrightarrow \Spec \OK
\end{equation*}
defined by the inclusion $\OK \hookrightarrow \cO_{K'},$ this pull-back construction maps the hermitian vector bundle
$$\olE:= (E, (\Vert.\Vert_\sigma)_{\sigma: K\hookrightarrow \C})$$
to
$$f^\ast\olE :=  (E', (\Vert.\Vert'_{\sigma'})_{\sigma': K'\hookrightarrow \C}),$$
where $E':=E\otimes_{\OK}\cO_{K'}$ and, for any $\sigma': K'\hookrightarrow \C$ of restriction $\sigma:=\sigma'_{\mid K},$ we let 
$$\Vert.\Vert'_{\sigma'} := \Vert.\Vert_\sigma \mbox{ on } E'_{\sigma'} \simeq E_\sigma.$$

The \emph{Arakelov degree} of the hermitian vector bundle $\olE$ over $\Spec \OK$ is defined as the real number 
$$\dega \olE := \log \vert E/\OK s\vert - \sum_{\sigma: K \hookrightarrow \C} \log \Vert s\Vert_\sigma$$
when $\rk E =1,$ where $s$ denotes any element of $E\setminus\{0\},$ and as 
$$\dega \olE := \dega \Lambda^{\rk E} \olE$$
in general.

It turns out to be convenient to introduce also the \emph{normalized} Arakelov degree :
$$\ndeg \olE := \frac{1}{[K:\Q]} \dega \olE.$$
It is indeed invariant by extension of the base field $K$. Namely, for any extension of number fields $K \hookrightarrow K',$ we have
\begin{equation}\label{ndeginv}
\ndeg \pi^\ast \olE = \ndeg \olE
\end{equation}
where 
$$\pi: \Spec \cO_{K'} \longrightarrow \Spec \OK$$
denotes the morphism defined by the inclusion $\OK \hookrightarrow \cO_{K'}.$

As pointed out by Stuhler %%
(when $\OK = \Z$) and Grayson, %%%
the Arakelov degree satisfies formal properties similar to the ones of the degree of vector bundles on smooth projective curves over some base field $k$. This allowed them to define some arithmetic analogues   of \emph{semistability}, of \emph{successive slopes}, and of the \emph{Harder-Narasimhan filtration},  concerning hermitian vector bundles over arithmetic curves (\cite{Stuhler76}, \cite{Grayson84}. We refer to \cite{Andre09} and \cite{Chen_hn} for general discussions of the formalisms of slopes and associated filtrations).

Indeed, if $\olE$ is an arithmetic vector bundle over $\Spec \OK,$ of positive rank $\rk E,$ its \emph{slope} is defined as the quotient 
$$\mua(\olE) := \frac{\ndeg \olE}{\rk E}.$$
The hermitian vector bundle $\olE := (E, \Vert.\Vert_{\sigma: K \hookrightarrow \C})$ is said to be \emph{semistable} if, for every non-zero $\OK$-submodule $F$ of $E,$ the hermitian vector bundle $\olF$ (defined by $F$ equipped with the restriction to $F_\sigma$ of the hermitian metric $\Vert.\Vert_\sigma$) satisfies
$$\mua(\olF) \leqslant \mua (\olE).$$

To define the successive slopes and the Harder-Narasimhan filtration of $\olE$, we consider the set of points in $\R^2$ of the form $(\rk F,\ndeg \olF),$ where $F$ varies over the $\OK$-submodules of $E.$ The closed convex hull of this set may be also described as 
$$\mathcal{C}_{\olE} := \{ (x,y) \in [0, \rk E] \times \R \mid y \leqslant P_{\olE}(x) \}$$ for some (uniquely determined) concave function
$$P_{\olE} : [0,\rk E] \longrightarrow \R,$$
which is  affine on every interval $[i-1,i],$ $i \in \{1,\ldots,\rk E\}.$ The \emph{$i$-th slope} $\mua_i(\olE)$ of $\olE$ is the derivative of $P_{\olE}$ on this interval. These slopes satisfy
$$\mua_1(\olE) \geqslant \mua_2(\olE) \geqslant \ldots \geqslant \mua_{\rk E}(\olE),$$
and
$$\sum_{i=1}^{\rk E} \mua_i(\olE) = \ndeg \olE.$$

Any vertex of the so-called canonical polygon $\mathcal{C}_{\olE}$ --- that is, any point $(i, P_{\olE}(i))$ where $i$ is either $0,$ $\rk E,$ or an integer in $]0,\rk E[$ such that 
$\mua_i(\olE) > \mua_{i+1}(\olE)$ --- is of the form $(\rk F, \ndeg \olF)$ for some uniquely determined $\OK$-submodule $F$ of $E.$ Moreover these submodules are saturated (a $
\mathcal O_K$-submodule $F$ of $E$ is said to be saturated if $E/F$ is torsion-free) and fit into a filtration
$$0 = E_0 \subsetneq E_1 \subsetneq \ldots \subsetneq E_N = E,$$
the so-called \emph{Harder-Narasimhan}, or \emph{canonical, filtration} of $\olE.$ 
It is straightforward that
\begin{equation*}
\begin{split}
\mua_{\max} (\olE) & := \mua_1 (\olE) \\
& = \max_{0 \neq F \subset E} \mua (\olF),
\end{split}
\end{equation*}
and that $\olE$ is semistable iff $\mua_{\max} (\olE)  = \mua (\olE)$, or equivalently iff $P_{\olE}$ is linear, or iff the Harder-Narasimhan filtration of $E$ is trivial (that is : $E_0 =E,$ $E_1=E$).

It is natural to consider the joint data  of the Harder-Narasimhan filtration $(E_k)_{0 \leqslant k \leqslant N}$ and of the successive slopes $$\mua (\overline{E_k/E_{k-1}}) = \mua_i(\olE) \mbox{ for } \rk E_{k-1} < i \leqslant \rk E_k.$$ 
They define the weighted Harder-Narasimhan filtration, an instance of weighted filtration, or weighted flag, in the $K$-vector space $E_K$ (in the sense of Section \ref{SubSec:weighted} \emph{infra}), or equivalently, of a point in the real vectorial building $I(\mathrm{GL}(E_K))$ attached to the reductive group $\mathrm{GL}(E_K)$ over $K$ (see \cite{Rousseau81a}).

\subsection{} The successive slopes $(\mua_i(\olE))_{1\leqslant i \leqslant \rk E}$ of a hermitian vector bundle $\olE$ satisfy remarkable formal properties.

For instance, from the basic properties of the Arakelov degree, one easily derives the following relations between the slopes of some hermitian vector bundle $\olE$ over $\Spec \OK$ and of its dual $\olE^\vee$ :
\begin{equation}\label{transfer}
\mua_i(\olE^\vee) = - \mua_{\rk E + 1 -i} (\olE).
\end{equation}

Besides, the invariance of the normalized Arakelov degree under number field extensions (\ref{ndeginv}) and the canonical character of the Harder-Narasimhan filtration, together with a simple Galois descent argument, imply (\cite{BostBour96}, Proposition A.2; compare \cite{HuybrechtsLehn2010}, Lemma 3.2.2) :
\begin{prop}\label{arpullback} Let $K$ denote a number field, $K'$ a finite extension of $K$, and 
$$\pi : \Spec \cO_{K'} \longrightarrow \Spec \OK$$
the scheme morphism defined by the inclusion $\OK \hookrightarrow \cO_{K'}.$

If $\olE$ is a hermitian vector bundle over $\Spec \OK$, and if $(E_i)_{0\leqslant i \leqslant N}$ denotes its Harder-Narasimhan filtration, then the Harder-Narasimhan filtration of $\pi^\ast \olE$ is $(\pi^\ast E_i)_{0\leqslant i \leqslant N}$ $(:= (E_i \otimes_{\OK} \cO_{K'}))$.

Consequently, for any $i \in \{1, \ldots, \rk E\},$
\begin{equation}\label{arpullbackslopes}
\mua_i(\pi^\ast \olE) = \mua_i (\olE),
\end{equation}
and $\pi^\ast \olE$ is semistable if (and only if) $\olE$ is semistable.
\end{prop}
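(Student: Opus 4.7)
The plan is to reduce to the case where $K'/K$ is Galois and then combine the canonicity of the Harder--Narasimhan polygon with Galois descent. First I would record the one inclusion that is immediate: for any saturated $\OK$-submodule $F \subset E$, the base change $\pi^* F = F \otimes_{\OK} \cO_{K'}$ is a saturated $\cO_{K'}$-submodule of $\pi^*E$ (the quotient $\pi^*(E/F)$ remains torsion-free by flatness of $\OK \hookrightarrow \cO_{K'}$) of the same rank as $F$, and equation (\ref{ndeginv}) applied to $\olF$ gives $\ndeg \pi^*\olF = \ndeg \olF$. Hence every vertex of $\mathcal{C}_{\olE}$ is realized by a saturated subbundle of $\pi^*\olE$ of the same rank and degree, so $\mathcal{C}_{\olE} \subseteq \mathcal{C}_{\pi^*\olE}$.

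Next I would assume $K'/K$ is Galois with group $G$; the general case follows by choosing a Galois closure $L/K$ containing $K'$ and applying the result to the morphisms $\Spec \cO_L \to \Spec \OK$ and $\Spec \cO_L \to \Spec \cO_{K'}$ separately. Since $\pi \circ g = \pi$ for every $g \in G$, each $g$ induces an isomorphism of hermitian vector bundles $g^*\pi^*\olE \xrightarrow{\sim} \pi^*\olE$: on the underlying $\cO_{K'}$-module $E \otimes_{\OK} \cO_{K'}$ the action is via the second factor, and at an archimedean place $g$ merely permutes the embeddings $\sigma' : K' \hookrightarrow \C$ with a common restriction to $K$, preserving the hermitian norms which are pulled back from $\olE$. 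Consequently the rank and Arakelov degree of any saturated $\cO_{K'}$-submodule $F' \subset \pi^*E$ coincide with those of its translate $g^*F'$, so $\mathcal{C}_{\pi^*\olE}$ is $G$-stable. By the uniqueness of the saturated subbundle realizing a given vertex of the canonical polygon, the Harder--Narasimhan filtration of $\pi^*\olE$ is then $G$-invariant.

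Finally, faithfully flat Galois descent guarantees that any $G$-invariant saturated $\cO_{K'}$-submodule $F'$ of $\pi^*E$ has the form $\pi^*F$ for a unique saturated $\OK$-submodule $F \subset E$. Applied to the steps of the Harder--Narasimhan filtration of $\pi^*\olE$, this produces saturated submodules $F_i \subset E$ whose pullbacks realize the vertices of $\mathcal{C}_{\pi^*\olE}$ with unchanged rank and degree; combined with the inclusion $\mathcal{C}_{\olE} \subseteq \mathcal{C}_{\pi^*\olE}$ of the first step, this forces equality of the two polygons and identifies the $F_i$ with the $E_i$. Equation (\ref{arpullbackslopes}) and the equivalence of semistability then follow at once. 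The main delicate point is the Galois descent together with the reduction to the Galois case; once these are granted, the statement is a formal consequence of the canonicity of the Harder--Narasimhan polygon and of the invariance (\ref{ndeginv}).
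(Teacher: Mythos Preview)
Your proposal is correct and follows exactly the approach the paper indicates: invariance of the normalized Arakelov degree under base change, the canonicity (uniqueness) of the Harder--Narasimhan filtration, and a Galois descent argument after passing to a Galois closure. The paper does not spell out a proof but simply cites \cite{BostBour96}, Proposition~A.2 and \cite{HuybrechtsLehn2010}, Lemma~3.2.2 for precisely this line of reasoning, so you have supplied the details of the intended argument.
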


A hermitian vector bundle $\olE := (E, \Vert. \Vert)$ over $\Spec \Z$ is nothing else than a  Euclidean lattice. Its Arakelov degree may be expressed in terms of its covolume ${\rm covol}\,  \olE$:
$$\widehat{\deg}\, \olE = - \log {\rm covol}\, \olE,$$
and its successive slopes are related to its successive minima, classically defined in geometry of numbers (\cite{Cassels97}, Chapter VIII). For instance, if we consider the first of those :
$$\lambda_1(\olE) := \min_{v \in E \setminus \{0\}} \Vert v \Vert,$$
Minkowski's First Theorem easily implies (see  \cite{Borek05}, or \cite{Bost_Kunnemann}, 3.2 and 3.3) :
\begin{equation}\label{compfirstmax}
0 \leqslant \mua_{\max} (\olE) - \log \lambda_1(\olE)^{-1} \leqslant \frac{1}{2} \log \rk E.
\end{equation}

In brief, successive slopes may be seen as variants of the classical successive minima of Euclidean lattices which make sense for hermitian vector bundles, not only over $\Spec \Z,$ but over arbitrary arithmetic curves, and are compatible with extensions of number fields. 

Moreover they appear to be ``better behaved'' than the classical successive minima. For instance  relations (\ref{transfer}) may be seen as an avatar of the ``transference theorems'' relating successive minima of some euclidean lattice and of its dual lattice (cf. \cite{Cassels97}, VIII.5, and  \cite{Banaszczyk93}). However these theorems are not equalities, but involve error terms depending on the rank of the lattice, whereas the slope relations (\ref{transfer}) are exact equalities.

These properties of the slopes of hermitian vector bundles make them especially useful when using Diophantine approximation techniques in the framework of Arakelov geometry : working with slopes turns out to be an alternative to the classical use of Siegel's Lemma; their ``good behavior'' --- notably their invariance under number fields extensions --- make them a particularly flexible tool, leading to technical simplifications in diverse Diophantine geometry proofs (see for instance \cite{BostBour96} and \cite{Bost2001}, and \cite{BostICM} for references).  

\subsection{} Let $K$ be a number field and let $\olE$ and $\olF$ be two hermitian vector bundles of positive rank over $\Spec \OK.$

Recall that we have :
$$\mua(\olE \otimes \olF) = \mua(\olE)+ \mua(\olF).$$
Applied to submodules of $E$ and $F$, this implies the inequality:
$$\mua_{\max} (\olE \otimes \olF) \geqslant  \mua_{\max}(\olE) + \mua_{\max}(\olF)$$
(see for instance \cite{Chen_pm}, Proposition 2.4).

 In this article, we study the following:
 
 \begin{enonce}{Problem}\label{mumaxconj} Let $K$ be a number field and let $\olE$ and $\olF$ be two hermitian vector bundles of positive rank over $\Spec \OK.$

Does the equality
\begin{equation}\label{equamax} 
\mua_{\max} (\olE \otimes \olF) = \mua_{\max}(\olE) + \mua_{\max}(\olF)
\end{equation}
hold ?

In other words, is it true that, for any $\OK$-submodule $V$ of positive rank in  $E\otimes F,$ the following inequality
\begin{equation}\label{equamaxbis} 
\mua (\olV) \leqslant \mua_{\max}(\olE) + \mua_{\max}(\olF)
\end{equation}
holds ?

\end{enonce}

This problem is easily seen to admit the following equivalent formulation:

\begin{enonce}{Problem}\label{semistableconj}
Does the semistability of $\olE$ and $\olF$ imply the one of $\olE \otimes \olF$ ?
\end{enonce}
\noindent and  the following one as well: 

\begin{enonce}{Problem}\label{HNtens} Does the weighted Harder-Narasimhan filtration of $\olE \otimes \olF$ coincide with the tensor product\footnote{See  Section \ref{Filtr}, \emph{infra}, for the definition of weighted flags or filtrations, and 
of their tensor products.} of 
% the weighted Harder-Narasimhan filtrations
the ones  of $\olE$ and $\olF$ ?
\end{enonce}

\subsection{}\label{geotens} To put these problems in perspective, we present a simple proof showing that their geometric counterparts,  concerning vector bundles on smooth projective curves over a field of characteristic zero, have a positive answer. This will demonstrate how Theorem 1.1 may be derived from the basic theory of ample line bundles on projective varieties. (Our argument is in the spirit of the arguments in \cite{Miyaoka87} and \cite{Hart2} alluded to in \ref{debut}, \emph{supra}. However it avoids to resort to the representation theory of the linear group. We refer the reader to \cite{HuybrechtsLehn2010}, sections 3.1-2, and \cite{LazarsfeldII}, Chapter 6, for related results and references.)\footnote{The content of this section was discussed in \cite{Bost97Bis} and partially appears in   \cite{Bost_Kunnemann}, Section 3.8. Closely related arguments have been obtained independently by Y. Andr\'e (\cf \cite{Andre10}, Section 2).} 

Let $C$ be a smooth projective integral curve over some base field $k$, that we shall assume algebraically closed for simplicity.

The analogue of the invariant $\log \lambda_1(\olE)^{-1}$ (attached to some Euclidean lattice $\olE$) for a vector bundle $E$ of positive rank over $C$ is its \emph{upper degree}\footnote{This notion, and the notion of stable upper degree defined below, are variants of the \emph{lower degree} $\rm{ld} (E)$ and \emph{stable lower degree} $\mathrm{sld}(E)$ introduced in \cite{EsnaultViehweg90}: we have $\udegg E =- \mathrm{ld}(E^\vee),$ and $\sudegg E = - \mathrm{sld}(E^\vee).$}:
%$$\udegg E := \max_{\stackrel{L \hookrightarrow E}{\rk L = 1}} \deg L.$$ 
$$\udegg E := \max_{L \hookrightarrow E, \rk L = 1} \deg L.$$

Recall that, if $C'$ is another smooth projective integral curve over $k$ and $\pi: C' \rightarrow C$ a dominant (and consequently surjective and finite) $k$-morphism, then for any vector bundle $E$ over $C$, we have: 
$$\deg \pi^\ast E = \deg \pi . \deg E,$$
where $\deg \pi$ denotes the degree of the field extension $\pi^\ast : k(C) \rightarrow k(C').$ Consequently the slope of vector bundles, defined as
$$\mu(E) := \frac{\deg E}{\rk E},$$
satisfies 
$$\mu(\pi^\ast E)  = \deg \pi. \mu(E)$$ 
and therefore the maximal slope, defined as 
$$\mu_{\max}(E):=\max_{0 \neq F \hookrightarrow E} \mu(F),$$
satisfies
$$\mu_{\max}(\pi^\ast E)  \geqslant  \deg \pi. \mu_{\max}(E).$$
In general, when the characteristic of $k$ is positive, this inequality may be strict  (\cite{Hart4}, \cite{Gieseker73}). However, when $k$ is a field of characteristic zero or, more generally, when $\pi$ is generically separable, a Galois descent argument establishes the equality %\footnote{}
\begin{equation}\label{pullbackmumax}
\mu_{\max}(\pi^\ast E)  =  \deg \pi. \mu_{\max}(E).
\end{equation}
(See for instance \cite{HuybrechtsLehn2010}, Lemma 3.2.2. It is the geometric counterpart  of  Proposition \ref{arpullback}. The occurrence of the factor $\deg \pi$ in (\ref{pullbackmumax}), that does not appear in the arithmetic analogue (\ref{arpullbackslopes}), comes from the use of the \emph{normalized} degree in the arithmetic case.) 

Similarly we have :
$$\udegg \pi^\ast E \geqslant \deg \pi. \udegg E.$$
Here the inequality may be strict even in the characteristic zero case\footnote{
Situations where this arises may be easily constructed as follows, say when $k= \C$. Consider an \'etale Galois covering $\pi: C'\rightarrow C,$ of Galois group  the full symmetric group $\mathfrak{S}_d,$ $d:= \deg \pi,$ and define $E$ as the kernel of the trace map $\pi_\ast \cO_{C'}\rightarrow \cO_C.$ This vector bundle of rank $d-1$ and degree zero is stable. (This follows from the irreducibily of the permutation representation of $\mathfrak{S}_d$ on $\C^{d-1} \simeq \{(x_1,\ldots,x_d) \in \C^d \mid x_1+ \cdots +x_d =0 \}.$) Consequently, when  $d\geqslant 3,$ we have 
$\udegg E < 0.$
Besides,  $\pi^\ast E$ is a trivial vector bundle of rank $d-1$, and consequently
$\udegg \pi^\ast E = 0.$}.
However the \emph{stable upper degree} of $E$,
$$\sudegg E := \sup_{\pi : C' \twoheadrightarrow C} \frac{1}{\deg \pi} \udegg \pi^\ast E,$$
defined by considering all the finite coverings $\pi : C' \twoheadrightarrow C$ as above, becomes compatible with finite coverings by its very definition. Namely, for every finite covering $\pi : C' \twoheadrightarrow C$, we have:
$$
\sudegg \pi^\ast E = \deg \pi.\sudegg E.
$$

The following result is well-known (compare \cite{LazarsfeldII}, Section 6.4), but does not seem to be explicitly stated in the literature.

\begin{prop}\label{mumsudegg} If $k$ has characteristic zero, then, for any vector bundle of positive rank over $C$, we have:
\begin{equation}\label{sumax}
\sudegg E = \mu_{\max}(E).
\end{equation}
\end{prop}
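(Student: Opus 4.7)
The inequality $\sudegg E \leq \mu_{\max}(E)$ is straightforward: by (\ref{pullbackmumax}), for any finite covering $\pi : C' \twoheadrightarrow C$ and any line subbundle $L \hookrightarrow \pi^\ast E$, one has $\deg L \leq \mu_{\max}(\pi^\ast E) = \deg \pi \cdot \mu_{\max}(E)$; dividing by $\deg \pi$ and taking suprema yields the claim.

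For the reverse inequality, I would first reduce to the case where $E$ is semistable: letting $F \subset E$ denote the first step of the Harder--Narasimhan filtration, $F$ is semistable with $\mu(F) = \mu_{\max}(E)$, and since any line subbundle of $\sigma^\ast F$ is also a line subbundle of $\sigma^\ast E$, we have $\sudegg F \leq \sudegg E$. So it suffices to show $\sudegg E \geq \mu(E)$ for $E$ semistable of slope $\mu$.

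The next step is to reformulate $\sudegg E$ as an intersection-theoretic invariant on the projective bundle $\pi : P := \PP(E^\vee) \to C$. Writing $\eta := c_1(\mathcal{O}_P(1))$ and $f := \pi^\ast[p_0]$, an irreducible curve $\Gamma \subset P$ dominating $C$ corresponds (after normalization) to a pair $(\sigma, L)$ where $\sigma : C' \twoheadrightarrow C$ is a finite cover and $L \hookrightarrow \sigma^\ast E$ a saturated line subbundle, with $\deg \sigma = \Gamma \cdot f$ and $\deg L = -\Gamma \cdot \eta$. Hence
\[
\sudegg E = \sup_\Gamma \frac{-\Gamma \cdot \eta}{\Gamma \cdot f}
\]
over dominant $\Gamma$. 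The semistability of $E$ and its pullbacks (via (\ref{pullbackmumax})) imply that the $\Q$-divisor $\eta + \mu f$ is nef on $P$.

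The main obstacle is producing, for every $\epsilon > 0$, an irreducible dominant curve $\Gamma_\epsilon \subset P$ with $(\eta + \mu f) \cdot \Gamma_\epsilon < \epsilon \, (\Gamma_\epsilon \cdot f)$. The plan is to first reduce to $\mu = 0$ by pulling back along a finite cover $\pi_0 : C_0 \to C$ of degree $q$ (where $\mu = p/q$) and twisting by a line bundle of degree $p$ on $C_0$, yielding a semistable bundle $E'$ of slope $0$ on $C_0$; a direct verification using the identification of line subbundles under pullback and twist gives $\sudegg E \geq \mu + (1/q) \sudegg E'$, so it suffices to prove $\sudegg E' = 0$. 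In the slope-zero case, $\eta'$ is nef on $P' := \PP(E'^\vee)$, and for small rational $\epsilon > 0$ the perturbation $\eta' + \epsilon f'$ is ample by the Nakai--Moishezon criterion (the positivity on subvarieties of the form $\PP(G^\vee) \subset P'$, for subsheaves $G \subset E'$, being guaranteed by the semistability of $E'$, after possibly further pullback to render the slopes of destabilising subsheaves arbitrarily small in the relevant normalized sense). A Bertini-type argument then produces a smooth irreducible dominant curve $\Gamma_\epsilon \subset P'$ as a complete intersection of $r - 1$ general members of a large multiple of $|\eta' + \epsilon f'|$; using $f'^2 = 0$ and $\eta'^r = \deg E' = 0$, one computes $\eta' \cdot \Gamma_\epsilon / (f' \cdot \Gamma_\epsilon) = (r-1) \epsilon$, which tends to $0$ as $\epsilon \to 0^+$, establishing $\sudegg E' \geq 0$ and hence $\sudegg E \geq \mu$.
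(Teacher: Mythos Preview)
Your argument is correct in outline and reaches the conclusion, but it follows a different and longer route than the paper's. Both proofs work on $P = \PP(E^\vee)$ and identify (implicitly or explicitly) $\sudegg E$ with the nef threshold of $\cO_P(1)$ against the fiber class. The paper, however, dispenses with your reductions (to $E$ semistable, then to slope zero) and avoids constructing any curves. Once it knows that $\cO_{E^\vee}(1) \otimes p^*\cO(\delta c)$ is nef for $\delta \geq \sudegg E$, it computes the top self-intersection of this class on the sub-projective-bundle $\PP_{\rm sub}(F)$ attached to an arbitrary subbundle $F \subset E$:
\[
c_1\bigl(\cO_{E^\vee}(1) \otimes p^*\cO(\delta c)\bigr)^{\rang F} \cdot [\PP_{\rm sub}(F)] \;=\; \rang(F)\,(\delta - \mu(F)).
\]
Nefness forces this to be non-negative, so $\sudegg E \geq \mu(F)$ for every $F$, i.e.\ $\sudegg E \geq \mu_{\max}(E)$. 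This is a single application of the nef property to a well-chosen higher-dimensional cycle, whereas your approach manufactures explicit witnessing curves via Bertini complete intersections --- valid, and perhaps more concrete, but requiring the slope-zero reduction and the ampleness of a perturbation.

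One point to tighten: your justification of the ampleness of $\eta' + \epsilon f'$ via Nakai--Moishezon on subvarieties ``of the form $\PP(G^\vee)$'' is not sufficient as stated, since arbitrary irreducible subvarieties of $P'$ need not be of this form. The clean fix is to note that $\eta'$ is nef (as you observe) and that $\eta' + M f'$ is ample for some large $M$ (relatively ample plus pullback of ample on $C_0$); then for any $\epsilon \in (0,M]$,
\[
\eta' + \epsilon f' \;=\; (1-\epsilon/M)\,\eta' \;+\; (\epsilon/M)\,(\eta' + M f')
\]
is a non-negative combination of nef and ample, hence ample. With this in place, your Bertini computation $\eta' \cdot \Gamma_\epsilon / (f' \cdot \Gamma_\epsilon) = (r-1)\epsilon$ goes through as written.
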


Let us briefly recall how this Proposition follows from the basic properties of nef line bundles over projective varieties\footnote{
A line bundle $M$ on some projective integral scheme $X$ over $k$ is \emph{nef} if, for every smooth projective integral curve $C'$ over $k$ and any $k$-morphism $\nu: C' \rightarrow X,$ the line bundle $\nu^\ast L$ on $C'$ has non-negative degree. Then, for any closed integral subscheme $Y$ in $X,$ the intersection number $c_1(M)^{\dim Y}. Y $ is non-negative. This definition and this property clearly depend only on the class of $L$ modulo numerical equivalence and immediately extend to $\Q$-line bundles.}. 

\begin{proof}[Proof of Proposition \ref{mumsudegg}]
Together with the invariance of the maximum slope by pull-back (\ref{pullbackmumax}), the trivial inequality
$$\udegg E \leqslant \mu_{\max} (E)$$
applied over finite coverings of $C$ shows that:
$$
\sudegg E \leqslant \mu_{\max}(E).
$$

To establish the opposite inequality,
consider the projective bundle
$$\PP_{\rm sub} (E) = \PP(E^\vee) := \mathbf{Proj} ({\rm Sym}(E^\vee)),$$
its structural morphism
$$p: \PP_{\rm sub} (E)\longrightarrow C,$$
and the canonical quotient line bundle over $\PP_{\rm sub} (E)$
$$p^\ast E^\vee \stackrel{q}{\longrightarrow} \cO_{E^\vee}(1).$$
Consider also a point $c$ in $C(k)$. The line bundle $\cO_{E^\vee}(1)$ is ample relatively to $p$, and $\cO(c)$ is ample over $C$. Consequently the set of rational numbers
$$\left\{ \delta \in \Q \mid \cO_{E^\vee}(1) \otimes p^\ast \cO (\delta.c) \mbox{ is nef} \right \}$$
is of the form $\Q \cap [\lambda, + \infty[$ for some (uniquely defined) real number $\lambda.$ 

We claim that
\begin{equation}\label{lambdasud}
\lambda = \sudegg E.
\end{equation}
Indeed, for any smooth projective integral curve $C'$ over $k$, the datum of a $k$-morphism 
$$\nu : C' \longrightarrow \PP_{\rm sub}(E)$$
is equivalent to the data of a $k$-morphism
$$\pi : C' \longrightarrow C$$
and of a quotient line bundle over $C'$ of $\pi^\ast E^\vee$: 
$$\label{Mq}
\pi^\ast E^\vee \longrightarrow M.$$
(This follows from the very definition of $\PP_{\rm sub}(E) \simeq \PP(E^\vee).$ To $\nu$ is attached the morphism  $\pi := p\circ \nu$ and the quotient line bundle 
$$\pi^\ast E^\vee \simeq \nu^\ast p^\ast E^\vee \stackrel{\nu^\ast q}{\longrightarrow} M:= \nu^\ast \cO_{\vee}(1).)$$ In turn, giving  the quotient line bundle $\pi^*E^\vee\rightarrow M$ is equivalent to giving a (saturated) sub-line bundle:
$$M^\vee \hookrightarrow \pi^\ast E.$$
Moreover, with the above notation, we have for every $\delta \in \Q$ :
\begin{equation*}
\begin{split}
\deg_{C'} \nu^\ast (\cO_{E^\vee}(1) \otimes p^\ast \cO(\delta.c)) & = \deg_{C'} M + \deg_{C'} \pi^\ast \cO(\delta.c) \\
& = - \deg_{C'} M^\vee + \delta \deg \pi.
\end{split}
\end{equation*}
This degree is always non-negative when $\pi$ is constant (then $\nu$ factorizes through a fiber of $p$, over which $\cO_{E^\vee}(1)$ is ample). Consequently it is non-negative for every $\nu: C' \rightarrow \PP_{\rm sub} (E)$ iff $\delta \geqslant \sudegg E.$ This establishes (\ref{lambdasud}).

To complete the proof, we are left to show
\begin{equation}\label{maxlambda}
\mu_{\rm max} (E) \leqslant \lambda.
\end{equation}

Let $e:= \rk E.$ Recall that the direct image by $p$ of the class $c_1(\cO_{E^\vee}(1))^e$ in the Chow group $\mathrm{CH}^e (\PP_{\rm sub}(E))$ is the class
$$p_\ast c_1(\cO_{E^\vee}(1))^e = - c_1(E)$$
in $\mathrm{CH}^1(C)$  and that
$$p_\ast c_1(\cO_{E^\vee}(1))^{e-1} = [C]$$
in $\mathrm{CH}^0(C)$ (see for instance \cite{Fulton}, Section 3.2). Consequently
\begin{equation*}
\begin{split}
c_1(\cO_{E^\vee}(1)\otimes p^\ast \cO(\delta.c))^e.[\PP_{\rm sub}(E)] & = p_\ast(c_1(\cO_{E^\vee}(1)\otimes p^\ast \cO(\delta.c))^e).[C] \\
& = \sum_{0\leqslant i \leqslant e} \binom{e}{i} p_\ast(c_1(\cO_{E^\vee}(1))^i) c_1(\cO(\delta.c))^{e-i}. [C]\\
& = (-c_1(E) + e \delta c_1(\cO(c)). [C]\\
& = - \deg E + e \delta \\
& = e (\delta - \mu(E)).
\end{split}
\end{equation*}

More generally, for any (saturated) subvector bundle $F$ of $E$, of positive rank $f$, the projective bundle $\PP_{\rm sub}(F)$ embeds canonically into  $\PP_{\rm sub}(E)$
and the restriction of $\cO_{E^\vee}(1)$ to $\PP_{\rm sub}(F)$ may be identified with $\cO_{F^\vee}(1),$ and we get :
\begin{equation}
c_1(\cO_{E^\vee}(1)\otimes p^\ast \cO(\delta.c))^f.[\PP_{\rm sub}(F)] = f (\delta - \mu(F)).
\end{equation}

When $\cO_{E^\vee}(1)\otimes p^\ast \cO(\delta.c)$ is nef, this intersection number is non-negative. This establishes  the required inequality (\ref{maxlambda}).
\end{proof}

Proposition \ref{mumsudegg} will allow us to derive Theorem \ref{Thm:NS}
in the equivalent form:

\begin{theo}\label{Thm:NS'} If $k$ has characteristic zero, then for any two vector bundles $E$ and $F$ of positive rank over $C,$ we have:
$$\mu_{\max}(E\otimes F) = \mu_{\max}(E) + \mu_{\max}(F).$$
\end{theo}

 Observe that, as in the arithmetic case, the slope of vector bundles of positive rank over a curve is additive under tensor product:
 $$\mu(E\otimes F) = \mu(E) + \mu(F),$$
 and that this immediately yields the inequality:
 $$\mu_{\max}(E\otimes F) \geqslant \mu_{\max}(E) + \mu_{\max}(F).$$
 Our derivation of the opposite inequality will rely on the following observation (compare \cite{Bost_Kunnemann}, Prop. 3.4.1 and 3.8.1):
 \begin{lemm}\label{Lemm:udegmax} For any two vector bundles $E$ and $F$ of positive rank over $C$, we have
 \begin{equation}\label{udegmax}
\udegg E \otimes F \leqslant \mu_{\max}(E) + \mu_{\max}(F).
\end{equation}
\end{lemm}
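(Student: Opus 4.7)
The plan is to fix an arbitrary line sub-bundle $L \hookrightarrow E\otimes F$ and show directly that
\[
\deg L \leqslant \mu_{\max}(E) + \mu_{\max}(F);
\]
since $\udegg(E\otimes F)$ is by definition the supremum of such degrees, the lemma will follow. The underlying idea is to turn the inclusion into a morphism whose image sits inside $F$ (so that its slope is bounded by $\mu_{\max}(F)$) while being a quotient of a twist of $E^\vee$ (so that its slope is bounded from below in terms of $\mu_{\max}(E)$).

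Concretely, I would first apply the tensor-hom adjunction
\[
\Hom(L, E\otimes F)\simeq \Hom(L\otimes E^\vee, F)
\]
to transform the nonzero inclusion $L\hookrightarrow E\otimes F$ into a nonzero morphism $\psi:L\otimes E^\vee\to F$. Setting $K:=\Image\psi$, the sheaf $K$ is a nonzero coherent subsheaf of the locally free $\cO_C$-module $F$, hence torsion-free; passing to its saturation in $F$ if needed (which can only increase its slope) one obtains
\[
\mu(K)\leqslant \mu_{\max}(F).
\]

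On the other hand, $K$ is a nonzero torsion-free quotient of $L\otimes E^\vee$, and it is a standard property of the Harder-Narasimhan filtration that any such quotient of a vector bundle $G$ has slope at least $\mu_{\min}(G)$. Since tensoring with the line bundle $L$ merely shifts the Harder-Narasimhan filtration, and since $\mu_{\min}(E^\vee)=-\mu_{\max}(E)$ by the usual duality between the Harder-Narasimhan filtrations of $E$ and of $E^\vee$, we get
\[
\mu(K)\geqslant \mu_{\min}(L\otimes E^\vee)=\deg L+\mu_{\min}(E^\vee)=\deg L-\mu_{\max}(E).
\]
Chaining the two inequalities yields $\deg L\leqslant \mu_{\max}(E)+\mu_{\max}(F)$, which was the desired estimate.

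There is no genuine obstacle in this argument: its two ingredients — the tensor-hom adjunction and the lower bound $\mu_{\min}(G)\leqslant \mu(Q)$ for $Q$ a torsion-free quotient of a vector bundle $G$ on $C$ — are both standard and characteristic-free. In particular, the lemma holds with no restriction on $\mathrm{char}(k)$, and the characteristic-zero hypothesis needed to derive Theorem \ref{Thm:NS'} will enter only through the use of Proposition \ref{mumsudegg}.
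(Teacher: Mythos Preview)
Your argument is correct. Both steps --- the tensor-hom adjunction producing a nonzero $\psi:L\otimes E^\vee\to F$, and the chain $\mu_{\min}(L\otimes E^\vee)\leqslant\mu(\Image\psi)\leqslant\mu_{\max}(F)$ --- are standard and characteristic-free, and your closing remark about where the characteristic-zero hypothesis actually enters is accurate.

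Your route, however, differs from the paper's. The paper passes to the generic point, identifies a nonzero section $l\in L_K$ with a linear map $T:E_K^\vee\to F_K$, and lets $V\subset E$ and $W\subset F$ be the saturated subsheaves with generic fibres $\Image({}^tT)$ and $\Image(T)$; these have common rank $r=\rho(L)$, the tensorial rank of $L$. It then builds a nonzero determinant morphism $\det_L:L^{\otimes r}\to\Lambda^rV\otimes\Lambda^rW$, which yields the sharper inequality
\[
\deg L\leqslant\mu(V)+\mu(W),
\]
from which the lemma follows since $\mu(V)\leqslant\mu_{\max}(E)$ and $\mu(W)\leqslant\mu_{\max}(F)$.

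What each approach buys: yours is shorter and is in fact the argument the paper itself deploys in the \emph{arithmetic} setting (Proposition~\ref{Pro:upp2}), where it is phrased as a slope inequality for the morphism $E^\vee\to F$. The paper's determinant argument, on the other hand, produces the refinement $\deg L\leqslant\mu(V)+\mu(W)$ involving the specific image subbundles rather than the maximal slopes; this refinement is what later (in Proposition~\ref{Pro:majoration}) yields the extra $-\tfrac12\log\rho(M)$ term that drives much of Section~\ref{Sec:small rank}. For the bare statement of the lemma, though, your proof is entirely adequate.
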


Observe that  the inequality (\ref{udegmax}) holds even when the base field $k$ has positive characteristic.

\begin{proof} Let us denote $K:= k(C)$ the function field of $C$.

Consider $L$ a line bundle in $E\otimes F$. By restriction to the generic point of $C$, it defines a one-dimensional vector subspace $L_K$ in the $K$-vector space $(E\otimes F)_K \simeq E_K \otimes_K F_K.$ Let $l$ be a non-zero element of $L_K$, and let $V_K$ and $W_K$ the minimal $K$-vector subspaces of $E_K$ and $F_K$ such that $l \in V_K \otimes_KW_K$ : the element $l$ of $E_K\otimes_K F_K$ may be identified with a $K$-linear map $T$ in $\Hom_K(E_K^\vee, F_K)$ (resp. with $^t T$ in $\Hom(F_K^\vee,E_K)$), and $W_K$ (resp. $V_K$) is the image of $T$ (resp. $^t T$). Moreover the $K$-linear map $\tilde{T}:V_K^\vee \rightarrow W_K$ defined by $l$ seen as an element of $V_K\otimes_K W_K$ is invertible.

Clearly $V_K$ and $W_K$ do not depend of the choice of $l$ in $L_K\setminus\{0\}$, and if $V$ and $W$ denote the saturated coherent subsheaves of $E$ and $F$ whose restrictions to the generic point of $C$ are $V_K$ and $W_K$, then $L$ is included in $V\otimes W$, which is saturated in $E\otimes F$.

Let $r$ denote the common rank of $V$ and $W$. Let us denote
$${\rm{det}}_{L,K} := L_K^{\otimes r} \stackrel{\sim}{\longrightarrow} (\Lambda^r V \otimes \Lambda^r W)_K$$
the unique  $K$-isomorphism (of one-dimensional $K$-vector spaces) that maps $l^{\otimes r}$ to $\det \tilde{T}$, which is an element of 
$$\Hom_K( \Lambda^r V_K^\vee, \Lambda^r W_K) \simeq (\Lambda^r V \otimes \Lambda^r W)_K.$$
Observe that, if $L$, $V$, and $W$ may be trivialized over some open subscheme $U$ of $C$, and if $l$ is a regular section of $L$ over $U,$ then $\det \tilde{T}$ defines a morphism of line bundles over $U$, from $\Lambda^r V_U^\vee$ to $\Lambda^r W_U$. This shows that ${\rm{det}}_{L,K}$ is  the restriction to the generic point of $C$ of a morphism of line bundles over $C$:
$${\rm{det}}_L : L^{\otimes r} \longrightarrow \Lambda^r V \otimes \Lambda^r W.$$

Since ${\rm{det}}_{L,K}\neq 0,$ we obtain :
$$\deg L^{\otimes r} \leqslant \deg (\Lambda^r V \otimes \Lambda^r W),$$
or equivalently:
$$\deg L \leqslant \mu(V) +\mu(W).$$
This shows that
$$\deg L \leqslant \mu_{\max}(E) + \mu_{\max}(F).$$
\end{proof}
 
 \begin{proof}[Proof of Theorem \ref{Thm:NS'}] For any finite covering $\pi : C' \rightarrow C,$ we may apply Lemma \ref{Lemm:udegmax} to $\pi^\ast E$ and $\pi^\ast F$. So we have :
 $$\frac{1}{\deg \pi} \udegg \pi^\ast(E\otimes F) \leqslant \frac{1}{\deg \pi} \mu_{\max} (\pi^\ast E) + \frac{1}{\deg \pi} \mu_{\max}(\pi^\ast F).$$
 When $k$ is a field of characteristic zero, we may use the ``invariance'' of the maximal slope under finite coverings (\ref{pullbackmumax}), and we obtain
 \begin{equation}\label{sudeggmax}
 \sudegg E\otimes F \leqslant \mu_{\max}(E) +\mu_{\max}(F).
\end{equation}

Together with Proposition \ref{mumsudegg}, this establishes the inequality 
$$\mu_{\max}(E \otimes F)  \leqslant \mu_{\max}(E) +\mu_{\max}(F).$$
\end{proof}

\subsection{}\label{Subsec:sudegree}In the arithmetic situation, we may attach analogues of the upper and stable upper  degree to hermitian vector bundles over arbitrary arithmetic curves.

Namely, if $K$ is a number field and $\olE$ a hermitian vector bundle of positive rank over $\Spec \OK$, we define its \emph{upper arithmetic degree} $\udeg \olE$ as the maximum of the (normalized) Arakelov degree $\ndeg \olL$ of a $\OK$-submodule $L$ of rank $1$ in $E$ equipped with the hermitian structure induced by the one of $\olE$ (\cf \cite{Bost_Kunnemann}, 3.3), and its \emph{stable  upper arithmetic degree} as
$$\sudeg \olE := \sup_{K'} \udeg \pi^\ast \olE,$$
where $K'$ varies over all finite extensions of $K,$ and $\pi: \Spec \cO_{K'} \rightarrow \Spec \OK$ denotes the associated morphism. 

The analogue of Lemma \ref{Lemm:udegmax} still holds in this context. In other words, the conjectural estimate (\ref{equamaxbis}) is true when $\rk V = 1.$ (\cf \cite{Bost_Kunnemann}, Prop. 3.4.1). Together with the invariance of arithmetic slopes under extensions of number fields (Proposition \ref{arpullback}), this establishes:

\begin{prop}\label{sudegtens} For any two hermitian vector bundles of positive rank $\olE$ and $\olF$ over $\Spec \OK,$
\begin{equation}
\sudeg \olE \otimes \olF \leqslant \mua_{\max}(\olE) + \mua_{\max}(\olF).
\end{equation}
\end{prop}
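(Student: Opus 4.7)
The plan is to transpose, in the arithmetic setting, the derivation of Theorem \ref{Thm:NS'} from Lemma \ref{Lemm:udegmax}. The two ingredients needed are the rank-1 arithmetic analogue of Lemma \ref{Lemm:udegmax} --- that is, the inequality $\udeg(\olE' \otimes \olF') \leqslant \mua_{\max}(\olE') + \mua_{\max}(\olF')$ for any hermitian vector bundles $\olE'$, $\olF'$ over any $\Spec \cO_{K'}$ --- and the invariance of maximal slopes under finite base extension furnished by Proposition \ref{arpullback}.

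First I would establish (or invoke, as the paper does, as Proposition 3.4.1 of \cite{Bost_Kunnemann}) the rank-1 estimate. Its proof directly transposes the argument of Lemma \ref{Lemm:udegmax}: given a saturated rank-1 sub-$\cO_{K'}$-module $L$ of $E' \otimes F'$ and a non-zero $l \in L_{K'}$, one extracts the minimal subspaces $V_{K'} \subset E'_{K'}$ and $W_{K'} \subset F'_{K'}$ with $l \in V_{K'} \otimes_{K'} W_{K'}$, saturates them to $V \subset E'$ and $W \subset F'$ of common rank $r$, and constructs a canonical non-zero morphism
$${\rm det}_L : L^{\otimes r} \longrightarrow \Lambda^r V \otimes_{\cO_{K'}} \Lambda^r W$$
of rank-1 $\cO_{K'}$-modules. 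Provided this morphism has operator norm at most $1$ at every archimedean place with respect to the metrics induced from those of $\olE'$ and $\olF'$, the basic degree inequality for morphisms of hermitian line bundles gives $\ndeg \olL \leqslant \mua(\olV) + \mua(\olW) \leqslant \mua_{\max}(\olE') + \mua_{\max}(\olF')$, and maximizing over $L$ yields the rank-1 bound.

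With this in hand, the proof of the proposition is essentially formal. Applying the rank-1 bound to the pullbacks $\pi^* \olE$ and $\pi^* \olF$ over an arbitrary finite extension $\pi : \Spec \cO_{K'} \to \Spec \OK$, and using that tensor product commutes with pullback, yields
$$\udeg \pi^*(\olE \otimes \olF) \leqslant \mua_{\max}(\pi^* \olE) + \mua_{\max}(\pi^* \olF).$$
Proposition \ref{arpullback} then identifies the right-hand side with $\mua_{\max}(\olE) + \mua_{\max}(\olF)$, a bound independent of $K'$, and taking the supremum over all finite extensions $K'/K$ produces the stated inequality for $\sudeg(\olE \otimes \olF)$.

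The main technical obstacle lies entirely in the rank-1 step: controlling the operator norm of ${\rm det}_L$ at each archimedean place. This rests on the isometric identification of $V_\sigma \otimes_\C W_\sigma$ --- equipped with the tensor product of the restricted metrics from $\olE'$ and $\olF'$ --- with its image inside $(E' \otimes F')_\sigma$ endowed with the tensor product metric, together with the analogous compatibility for exterior powers. Once this archimedean bookkeeping is in place, the passage from $\udeg$ to $\sudeg$ is free of charge, exactly as in the geometric model.
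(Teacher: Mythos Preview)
Your proposal is correct and follows exactly the approach the paper indicates: the paper simply states that the arithmetic analogue of Lemma \ref{Lemm:udegmax} holds (citing \cite{Bost_Kunnemann}, Prop.~3.4.1) and that, combined with the invariance of slopes under base extension (Proposition \ref{arpullback}), this immediately yields the proposition. Your worry about the archimedean norm of ${\rm det}_L$ is well-placed but easily resolved: via the singular value decomposition of $l$ this reduces to the AM--GM inequality $\prod_i \sigma_i \leqslant r^{-r/2}(\sum_i \sigma_i^2)^{r/2}$, and indeed the paper later exploits the sharper factor $r^{-r/2}$ in Proposition \ref{Pro:majoration} to obtain the refined bound \eqref{Equ:sudeg1}.
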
 

To establish the additivity of maximal slopes under tensor product for hermitian vector bundles (\ref{equamax}), it would be enough to know that, as in the geometric case dealt with in Proposition \ref{mumsudegg}, the stable upper degree and the maximal slope of a hermitain vector bundle coincide. Unfortunately, this is \emph{not} the case.

More specifically, the trivial inequality
$$\udeg \olE \leqslant \mua_{\max} (\olE),$$
together with Proposition \ref{arpullback} show that
\begin{equation}
\sudeg\olE \leqslant \mua_{\max} (\olE).
\end{equation}
 Hermitian vector bundles for which this inequality is strict are easily found\footnote{We refer to \cite{Gaudron_Remond}, Section 3.3, for further results on hermitian vector bundles $\olE$ for which  $ \mua_{\max} (\olE) -\sudeg\olE $ take ``large'' positive values. }:
 %, and the optimality --- up to some bounded error

 \begin{prop}\label{propA2}
Let $A_2$ be the euclidean lattice  $\Z^3 \cap A_{2,\R}$ in the hyperplane
$$A_{2,\R} := \left\{(x_1,x_2,x_3) \in \R^3 \mid x_1 +x_2 +x_3 =0\right\}$$
of $\R^3$ equipped with the standard euclidean norm.

Then, seen as a hermitian vector bundle over $\Spec \Z,$ $A_2$ is semi-stable and satisfies
\begin{equation}\label{Aprems}
\mua_{\max}(A_2) = \mua(A_2) = -\frac{1}{4} \log 3
\end{equation}
and
\begin{equation}\label{Abis}
\sudeg (A_2) = -\frac{1}{2} \log 2.
\end{equation}
\end{prop}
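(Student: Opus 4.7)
The plan is to compute $\dega(A_2)$ by exhibiting a $\Z$-basis, deduce semistability by checking the slope of every rank-one sublattice, and then to reduce the stable upper degree to an optimization problem over projective points of the hyperplane $x_1+x_2+x_3=0$ defined over an arbitrary number field.

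For (\ref{Aprems}): the pair $(e_1-e_2,\,e_2-e_3)$ is a $\Z$-basis of $A_2$, with Gram matrix $\left(\begin{smallmatrix}2&-1\\-1&2\end{smallmatrix}\right)$ of determinant $3$, so $\mathrm{covol}(A_2)=\sqrt{3}$, hence $\dega(A_2)=-\tfrac12\log 3$ and $\mua(A_2)=-\tfrac14\log 3$. The primitive vectors of smallest norm in $A_2$ are the six roots $\pm(e_i-e_j)$, each of norm $\sqrt{2}$, so $\udeg(A_2)=-\log\sqrt{2}=-\tfrac12\log 2$. For every non-zero saturated $\Z$-submodule $F$ of $A_2$, either $F=A_2$, in which case $\mua(F)=-\tfrac14\log 3$, or $F$ has rank one and $\mua(F)\leqslant-\tfrac12\log 2$; since $-\tfrac12\log 2\leqslant-\tfrac14\log 3$ (equivalent to the trivial $4\geqslant 3$), we have $\mua(F)\leqslant\mua(A_2)$ in both cases. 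Hence $A_2$ is semistable, $\mua_{\max}(A_2)=-\tfrac14\log 3$, and (\ref{Aprems}) follows.

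The inequality $\sudeg(A_2)\geqslant\udeg(A_2)=-\tfrac12\log 2$ is immediate. For the reverse inequality, let $K'$ be a number field and $L$ a saturated rank-one $\cO_{K'}$-submodule of $\pi^{\ast}A_2$. Then $L$ is determined by a non-zero vector $u=(u_1,u_2,u_3)\in (K')^{3}$ with $u_1+u_2+u_3=0$, and can be written $L=J^{-1}\cdot u$, where $J=(u_1,u_2,u_3)$ is the fractional ideal of $\cO_{K'}$ generated by the entries of $u$. A short computation yields the scale-invariant formula
$$\ndeg\olL=\frac{1}{[K':\Q]}\Bigl(\log N(J)-\sum_{\sigma:K'\hookrightarrow\C}\log\|u\|_\sigma\Bigr).$$
If one coordinate vanishes, say $u_3=0$, then $u_2=-u_1$, $J=(u_1)$, and $\|u\|_\sigma^{2}=2|u_1|_\sigma^{2}$; substituting $\log N(J)=\log|N_{K'/\Q}(u_1)|=\sum_\sigma\log|u_1|_\sigma$ yields exactly $\ndeg\olL=-\tfrac12\log 2$. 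If on the contrary all three $u_i$ are non-zero, the AM-GM inequality at each archimedean place gives $\|u\|_\sigma^{2}\geqslant 3|u_1u_2u_3|_\sigma^{2/3}$, and the ideal-theoretic inequality $N(J)^{3}\leqslant|N_{K'/\Q}(u_1u_2u_3)|$ (a consequence of $v_{\mathfrak p}(u_i)\geqslant v_{\mathfrak p}(J)$ at every finite prime $\mathfrak p$) combine to produce $\ndeg\olL\leqslant-\tfrac12\log 3$, which is strictly smaller than $-\tfrac12\log 2$. In both cases $\ndeg\olL\leqslant-\tfrac12\log 2$, establishing $\sudeg(A_2)=-\tfrac12\log 2$.

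The main obstacle is the scale-invariant computation of $\ndeg\olL$ for the saturated line $J^{-1}\cdot u$ and the identification of the saturation as $J^{-1}\cdot u$ in the first place; once this is set up, the rest is an elementary combination of AM-GM at the archimedean places with the valuation inequality at the finite ones.
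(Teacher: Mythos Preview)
Your proof is correct and follows essentially the same route as the paper's. Two minor differences in presentation: for semistability, the paper invokes the general principle that an absolutely irreducible action of $\Aut\olE$ on $E_K$ forces semistability (here via the permutation action of $\mathfrak{S}_3$), whereas you check the rank-one sublattices directly; and for the upper bound on $\sudeg$, the paper phrases the finite-place contribution as a slope inequality applied to each coordinate form $X_{i\mid L}:L\to\cO_K$, while you unpack the same content explicitly via the saturation $L=J^{-1}u$ and the divisibility $N(J)\mid N((u_i))$. The archimedean step (AM--GM giving $\Vert u\Vert_\sigma^2\geqslant 3\,|u_1u_2u_3|_\sigma^{2/3}$) and the resulting bound $-\tfrac12\log 3$ in the generic case are identical in both arguments.
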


\begin{proof} The semistability of $A_2$ follows from the fact that the natural representation by permutation of the coordinates of the symmetric group $\frak{S}_3$ on $A_{2,\R}$ is (absolutely) irreducible and preserves $A_2$ (\cf \cite{BostBour96}, Proposition A.3, and Proposition \ref{PropirredAut} \emph{infra}).

The covolume of $A_2$ is $\sqrt{3}$, and consequently
$$\dega A_2 = - \log \sqrt{3}.$$
Together with the semistability of $A_2$, this establish (\ref{Aprems}).

To derive (\ref{Abis}), firstly we observe that, for any $i \in \{1,2,3\}$, the $\Z$-submodule $L_i$ of rank 1 in $A_2$ 
defined by the equation $X_i = 0$, 
satisfies
$$\dega \olL_i = -\frac{1}{2} \log 2.$$
(The line $X_1 =0$, for instance, admits as $\Z$-basis the vector $(0,1-1)$, of euclidean norm $\sqrt{2}$.) 

Beside we claim that, \emph{for any number field $K$ and any $\OK$-submodule $L$ of rank $1$ in $\pi^\ast A_2 := A_2 \otimes_\Z \OK$ distinct from the three modules $\pi^\ast L_i,$ we have :}
\begin{equation}\label{Ater}
\ndeg \olL \leqslant -\frac{1}{2} \log 3.
\end{equation}
This will complete the proof of (\ref{Abis}) (actually, of a more precise result). Observe also that (\ref{Ater}) may be an equality :  consider a number field $K$ containing a primitive third root of unity $\zeta_3$, and $L:= \OK (1, \zeta_3, \zeta_3^2).$

 To establish the upper bound (\ref{Ater}), consider the  $\OK-$linear morphisms
$$X_{i\mid L} : L \longrightarrow \cO_K,  \,\,\,\, i\in\{1,2,3\}.$$
By hypothesis they are non-zero, and therefore the following inequalities hold, as straightforward consequences of the definition of the Arakelov degree :
$$\ndeg \olL \leqslant -\frac{1}{[K:\Q]} \sum_{\sigma : K \hookrightarrow \C} \log \Vert X_{i \mid L}\Vert_\sigma.$$
This implies :
$$ 3 \ndeg \olL \leqslant -\frac{1}{[K:\Q]} \sum_{\sigma : K \hookrightarrow \C} \log \prod_{1\leqslant i \leqslant 3}\Vert X_{i \mid L}\Vert_\sigma.$$
To conclude, observe that, for any $(z_1, z_2, z_3)$ in $A_{2,\C}$, we have
$$\vert z_1 z_2 z_3 \vert \leqslant 3^{-3/2} (\vert z_1 \vert^2 + \vert z_2\vert^2+\vert z_3\vert^2)^{3/2},$$
and therefore, for every embedding $\sigma : K \hookrightarrow \C,$
$$\log \prod_{1\leqslant i \leqslant 3}\Vert X_{i \mid L}\Vert_\sigma \leqslant -\frac{3}{2} \log 3.$$
\end{proof}

Proposition \ref{propA2} and its proof may be  extended to higher dimensions. We leave as an exercise for the reader the proof of the following :

\begin{prop}\label{propAn} Let $n$ be a positive integer, and let $A_n$ be the euclidean lattice  $\Z^{n+1} \cap A_{n,\R}$ in the hyperplane
$$A_{n,\R} := \left\{(x_0,\ldots, x_n) \in \R^{n+1} \mid x_0 +\ldots +x_n =0\right\}$$
of $\R^{n+1}$ equipped with the standard euclidean norm.

The hermitian vector bundle $A_n$ over $\Spec \Z$  is semi-stable and its (maximal) slope is  $$\mua(A_n) = -\frac{1}{2n} \log (n+1).$$

Moreover,
$$\sudeg (A_2) = -\frac{1}{2} \log 2.
$$
More precisely,
for any number field $K$ and any $\OK$-submodule $L$ of rank 1 in $\pi^\ast A_n := A_n \otimes_\Z \OK,$ we have
$$\ndeg \olL \leqslant -\frac{1}{2} \log \alpha(L),
$$
where $\alpha(L)$ denotes the integer $\geqslant 2$ defined as the cardinality of
$$\left\{ i \in \{0,\ldots,n\} \mid X_{i,L} \neq 0 \right\}.$$
\end{prop}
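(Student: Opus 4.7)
The plan is to extend the three-step proof of Proposition \ref{propA2}: (i) establish semistability by a symmetry argument, (ii) compute the covolume of $A_n$ directly to determine the slope, and (iii) derive the sharper pointwise bound $\ndeg\olL\leqslant -\tfrac{1}{2}\log\alpha(L)$ using the slope inequality for the coordinate functionals $X_{i|L}$, combined with AM-GM.

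For (i), the symmetric group $\mathfrak{S}_{n+1}$ acts on $A_{n,\R}$ by coordinate permutations, preserving both the standard Euclidean norm and the lattice $A_n$; the induced representation is the $n$-dimensional standard representation of $\mathfrak{S}_{n+1}$, which is absolutely irreducible. By the criterion recalled in the proof of Proposition \ref{propA2} (\cite{BostBour96}, Proposition A.3, and Proposition \ref{PropirredAut}), this forces $A_n$ to be semistable. For (ii), the short exact sequence
$$0 \longrightarrow A_n \longrightarrow \Z^{n+1} \longrightarrow \Z \longrightarrow 0$$
(the surjection being the sum of coordinates), together with the fact that the orthogonal projection of $\Z^{n+1}$ onto $\R\cdot(1,\ldots,1)$ is the rank-$1$ lattice $\Z\cdot(1,\ldots,1)/(n+1)$ of covolume $1/\sqrt{n+1}$, yields $\mathrm{covol}(A_n)=\sqrt{n+1}$, whence $\hat\mu(A_n)=-\tfrac{1}{2n}\log(n+1)$.

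For (iii), I would fix a number field $K$ and a rank-$1$ $\OK$-submodule $L\subset\pi^\ast A_n$, and set $S=\{i:X_{i|L}\neq 0\}$, $\alpha=|S|$. Because $\sum_i X_i\equiv 0$ on $A_n$, one has $\alpha\geqslant 2$. Mimicking the proof of (\ref{Ater}) in Proposition \ref{propA2}, I would sum over $i\in S$ the slope inequalities for the nonzero morphisms $X_{i|L}:L\to\OK$ to reduce the bound to a pointwise estimate at each embedding $\sigma$: if $v=(z_0,\ldots,z_n)$ is a unit vector spanning $L_\sigma\subset A_{n,\C}$, then $\|X_{i|L}\|_\sigma=|z_i|$ and $\sum_{i=0}^{n}|z_i|^2=1$; applying AM-GM to the $\alpha$ non-negative quantities $|z_i|^2$ for $i\in S$ gives
$$\prod_{i\in S}|z_i|\leqslant \alpha^{-\alpha/2},$$
which, combined with the summed slope inequality, yields $\ndeg\olL\leqslant -\tfrac{1}{2}\log\alpha(L)$. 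The $\sudeg$-identity follows immediately: since $\alpha(L)\geqslant 2$ for every such $L$, one obtains $\sudeg A_n\leqslant -\tfrac{1}{2}\log 2$, while the $\Z$-submodule spanned by $(1,-1,0,\ldots,0)$ has $\alpha=2$ and $\ndeg=-\tfrac{1}{2}\log 2$, achieving equality.

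The main subtlety worth flagging is that, remarkably, the defining constraint $\sum z_i=0$ of the hyperplane $A_{n,\C}$ is \emph{not} used in the AM-GM step (only $\sum|z_i|^2=1$ enters); it intervenes in the argument only combinatorially, through the lower bound $\alpha\geqslant 2$ (without which one could have $\alpha=1$ and no useful bound). The real content of the slope estimate is thus the sharp AM-GM inequality, exactly as in the case $n=2$ treated in Proposition \ref{propA2}, and the extension to arbitrary $n$ amounts to a direct transcription.
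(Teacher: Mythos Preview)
Your proposal is correct and is precisely the extension of the proof of Proposition \ref{propA2} that the paper intends: the paper explicitly leaves Proposition \ref{propAn} as an exercise ("Proposition \ref{propA2} and its proof may be extended to higher dimensions. We leave as an exercise for the reader\ldots"), and your three steps (irreducibility of the permutation representation of $\mathfrak{S}_{n+1}$, covolume $\sqrt{n+1}$, and the AM--GM bound on $\prod_{i\in S}|z_i|$) reproduce that argument verbatim in the general case. Your closing observation---that the linear constraint $\sum z_i=0$ enters only through $\alpha(L)\geqslant 2$ and not in the AM--GM step itself---is accurate and worth keeping.
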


Although the stable upper arithmetic degree and the maximal slope of a hermitian vector bundle $\olE$ over $\Spec \OK$ may not not coincide, they differ by  some additive error  bounded in terms of $\rk E$ only:

\begin{prop}\label{absMink} For any positive integer $r$, there exists $C(r)$ in $\R_+$ such that, for any number field $K$ and any hermitian vector bundle $\olE$ over $\Spec \OK$,
\begin{equation}\label{eq:absMink}
\mua_{\max} (\olE) \leqslant \sudeg (\olE) + C(r).
\end{equation}
\end{prop}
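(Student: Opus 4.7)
The plan is to reduce to the semistable case via the Harder-Narasimhan filtration and then invoke an absolute (\emph{i.e.}, uniform in $K$) Minkowski-type estimate to produce, after a finite base extension $K'/K$, a rank-one sub-$\cO_{K'}$-module of $\pi^\ast E$ whose normalized Arakelov degree differs from $\mua_{\max}(\olE)$ by at most a constant depending only on $r$.

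First, I would let $\olE_1$ denote the first non-zero term of the Harder-Narasimhan filtration of $\olE$: this is a semistable saturated sub-hermitian vector bundle with $\mua(\olE_1)=\mua_{\max}(\olE)$ and $\rk E_1\leqslant r$. For any finite extension $K'/K$, any rank-one sub-$\cO_{K'}$-module of $\pi^\ast E_1$ is \emph{ipso facto} a rank-one sub-$\cO_{K'}$-module of $\pi^\ast E$, so $\sudeg(\olE_1)\leqslant\sudeg(\olE)$. Hence it suffices to prove (\ref{eq:absMink}) under the additional assumption that $\olE$ is semistable of rank $\leqslant r$ and slope $\mu:=\mua(\olE)=\mua_{\max}(\olE)$.

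The key step is then to establish the following absolute (uniform-in-$K$) version of Minkowski's first theorem, along the lines of \cite{Borek05}: there exists a constant $\hat{\gamma}(r)\in\R_+$ depending only on $r$ such that, for every number field $K$, every semistable hermitian vector bundle $\olE$ of rank $\leqslant r$ over $\Spec\OK$, and some finite extension $K'/K$, one can find $v\in\pi^\ast E\setminus\{0\}$ with
\begin{equation*}
\frac{1}{[K':\Q]}\sum_{\sigma'\colon K'\hookrightarrow\C}\log\|v\|_{\sigma'}\;\leqslant\;-\mu+\tfrac{1}{2}\log\hat{\gamma}(r).
\end{equation*}
Granting this, the saturation $L$ of $\cO_{K'}\cdot v$ in $\pi^\ast E$ is a saturated rank-one sub-$\cO_{K'}$-module satisfying
$$\ndeg(\olL)\;\geqslant\;-\frac{1}{[K':\Q]}\sum_{\sigma'}\log\|v\|_{\sigma'}\;\geqslant\;\mu-\tfrac{1}{2}\log\hat{\gamma}(r),$$
so from $\sudeg(\olE)\geqslant\udeg(\pi^\ast\olE)\geqslant\ndeg(\olL)$ one obtains (\ref{eq:absMink}) with $C(r):=\tfrac{1}{2}\log\hat{\gamma}(r)$.

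The main obstacle will be to make this Minkowski estimate uniform in $K$. A naive application of Minkowski's first theorem to the $\Z$-lattice underlying $\olE$ (restriction of scalars) produces an additional term of the form $\frac{1}{2[K:\Q]}\log|d_K|$; by the discriminant tower formula this contribution does not go away when one enlarges $K$ within a given tower. To absorb it into a constant depending only on $r$, the right framework is the adelic, ``absolute'' geometry of numbers developed by Bombieri-Vaaler and Roy-Thunder (and adapted to hermitian vector bundles over arithmetic curves in \cite{Borek05}), where the classical Hermite constant is replaced by its absolute analogue $\hat{\gamma}(r)$, depending only on the rank.
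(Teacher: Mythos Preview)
Your reduction to the semistable piece $\olE_1$ of the Harder-Narasimhan filtration, together with the observation $\sudeg(\olE_1)\leqslant\sudeg(\olE)$, is exactly right and is also the first move the paper makes (see the proof of Proposition~\ref{Pro:sudeg encad} later in the text). Your second step---invoking an absolute, discriminant-free Minkowski/Siegel estimate \`a la Bombieri--Vaaler and Roy--Thunder to produce a short vector after a suitable base change---is correct and is precisely the approach the paper attributes to Roy--Thunder, yielding $C(r)=\frac{\log 2}{2}(r-1)$. You have correctly identified the genuine obstacle (the $\frac{1}{2[K:\Q]}\log|d_K|$ term from naive restriction of scalars) and the right cure (the absolute Hermite constant).

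The paper's own preferred route is different: rather than an adelic geometry-of-numbers argument, it appeals to Zhang's theory of arithmetic ampleness on $\PP(E^\vee)$ (Theorem~\ref{Thm:Zhang} in the body of the paper), which controls the successive logarithmic minima of $\PP(E^\vee)$ in terms of $\ndeg(\olE)$ and gives directly
\[
\mua(\olE_{\des}) \;\leqslant\; \degi{1}(\olE_{\des}) + \tfrac{1}{2}\ell(m)
\;\leqslant\; \degi{1}(\olE) + \tfrac{1}{2}\ell(r),
\]
where $\ell(r)=\sum_{2\leqslant j\leqslant r}1/j$. This buys a sharper constant ($\tfrac{1}{2}\ell(r)\leqslant\tfrac{1}{2}\log r$ instead of $\tfrac{\log 2}{2}(r-1)$), and the paper notes that this value is optimal up to a bounded error (Gaudron--R\'emond). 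The Zhang approach is also the one the paper reuses systematically later, since the same inequality on successive degrees feeds into the tensor-product estimates of Sections~3--4. Your approach, by contrast, is more self-contained from the point of view of classical geometry of numbers and does not require Arakelov intersection theory on projective spaces.
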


As pointed out in 
\cite{BostBour96}, Appendix, this follows from Zhang's theory of ample hermitian line bundles in Arakelov geometry (\cite{Zhang95}) applied to projective spaces, and actually appears as an arithmetic analogue of the above derivation of Proposition  \ref{mumsudegg} from the classical theory of ample and nef line bundles applied to projective bundles over curves. Proposition \ref{absMink} has been independently established by Roy and Thunder 
\cite{RoyThunder96}, who used direct arguments of geometry of numbers. Their proof shows that Proposition \ref{absMink} holds with
\begin{equation}
\label{RT} C(r) = \frac{\log 2}{2} (r-1).
\end{equation}

Actually, by using Zhang's theory carefully, one obtains that it actually holds with 
$$C(r) = \frac{1}{2}\ell(r),$$
where $$\ell(r) := \sum_{2\leqslant i \leqslant r} \frac{1}{i}$$
(see for instance  \cite{DavidPhilippon99}, section 4.2, or \cite{Gaudron_Remond}, section 3.2). This improves on (\ref{RT}), since $\ell(r) \leqslant \log r$, and is actually optimal, up to some bounded error term, as shown by Gaudron and R\'emond in \cite{Gaudron_Remond}.

Observe that, from Propositions \ref{sudegtens} and \ref{absMink}, we immediately derive:

\begin{coro}\label{tenserr} For any two hermitian vector bundles $\olE$ and $\olF$ over some arithmetic curve $\Spec \OK,$
we have:
\begin{equation}\label{eq:tenserr}
\mua_{\max} (\olE \otimes \olF) \leqslant \mua_{\max} (\olE) + \mua_{\max}(\olF) + C(\rk E . \rk F).
\end{equation}
\end{coro}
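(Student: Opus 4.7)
The plan is essentially to concatenate the two displayed inequalities from Propositions \ref{sudegtens} and \ref{absMink}, applied to the right hermitian vector bundle. The result is immediate, so my ``proof proposal'' is really just a verification that the two inputs combine as advertised.

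First, I would apply Proposition \ref{absMink} to the hermitian vector bundle $\olE \otimes \olF$ over $\Spec \OK$, whose underlying $\OK$-module has rank $\rk E \cdot \rk F$. This yields
\[
\mua_{\max}(\olE \otimes \olF) \leqslant \sudeg(\olE \otimes \olF) + C(\rk E \cdot \rk F).
\]
Here the constant $C(\cdot)$ depends only on the rank of the ambient hermitian vector bundle, which is the reason the error term in the conclusion is of the form $C(\rk E \cdot \rk F)$ rather than something depending on $K$ or on the metrics.

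Next, I would bound the stable upper arithmetic degree of the tensor product by invoking Proposition \ref{sudegtens}, which gives
\[
\sudeg(\olE \otimes \olF) \leqslant \mua_{\max}(\olE) + \mua_{\max}(\olF).
\]
Chaining these two inequalities yields (\ref{eq:tenserr}) at once.

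There is no real obstacle: both inputs have already been recorded. The only minor point worth checking is that Proposition \ref{absMink} is stated uniformly in the number field $K$, so applying it to $\olE \otimes \olF$ on $\Spec \OK$ is legitimate; this is exactly what its statement provides. If I wanted to be quantitative, I could substitute the explicit value $C(r) = \tfrac{1}{2}\ell(r)$ (with $\ell(r) = \sum_{2 \leqslant i \leqslant r} 1/i$) mentioned after Proposition \ref{absMink}, in which case the corollary takes the explicit form
\[
\mua_{\max}(\olE \otimes \olF) \leqslant \mua_{\max}(\olE) + \mua_{\max}(\olF) + \tfrac{1}{2}\ell(\rk E \cdot \rk F),
\]
but this is not required for the stated result.
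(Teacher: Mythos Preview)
Your proposal is correct and matches the paper's approach exactly: the paper simply states that the corollary follows immediately from Propositions \ref{sudegtens} and \ref{absMink}, and your argument spells out precisely this concatenation. The additional remark on the explicit constant $\tfrac{1}{2}\ell(\rk E \cdot \rk F)$ is also in the paper, appearing just after the corollary as inequality (\ref{eq:tenserrbis}).
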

Here $C(\rk E . \rk F)$ denotes the value of the constant $C(r)$ occurring in Proposition \ref{absMink} when $r = \rk E . \rk F.$ According to the above discussion of the possible value of $C(r)$, we obtain (see also \cite{Andre10}, Theorem 0.5):
\begin{equation}\label{eq:tenserrbis}
\begin{split}
\mua_{\max} (\olE \otimes \olF) & \leqslant \mua_{\max} (\olE) + \mua_{\max}(\olF) + \ell(\rk E . \rk F) \\
& \leqslant \mua_{\max} (\olE) + \mua_{\max}(\olF) + \frac{1}{2} \log\rk E + \frac{1}{2}\log  \rk F.
\end{split}
\end{equation}

\subsection{} In 1997, the first author discussed Problems \ref{mumaxconj}--\ref{HNtens} above during a conference in Oberwolfach (\cite{Bost97Bis}), and presented some evidence for a \emph{positive} answer to these problems, namely (i) the upper bound in Proposition \ref{sudegtens} and the consequent validity of (\ref{equamax}) up to an error term stated in Corollary \ref{tenserr} above, (ii) a positive answer to  Problem  \ref{semistableconj} when $\olE$ or $\olF$ possesses an automorphism group acting irreducibly (\cf  Proposition \ref{PropirredAut} \emph{infra}),  and (iii) when $\olE$ and $\olF$ have rank two (\cf \,paragraph \ref{rk2} below).

He also discussed the geometric approach in Section \ref{geotens} and the examples in Propositions \ref{propA2} and \ref{propAn}  showing that  $\sudeg \olE$ can be smaller than $\mua_{\max}(\olE)$. This discussion, showing that, concerning vector bundles and projective bundles over curves, the analogy between number fields and function fields is not complete\footnote{a point made already clear by the study of projective geometry in the Arakelov context in \cite{BGS94}.}, made especially intriguing the available evidence for the ``good behavior''  under tensor product of slopes of hermitian vector bundles.

In the next two paragraphs, we briefly discuss the positive evidence for this preservation mentioned in points (ii) and (iii) above.

\subsection{}\label{irredAut} If $\olE := (E, \Vert.\Vert)$ is a hermitian vector bundle over  $\Spec \OK$, we shall denote $\Aut \olE$ its automorphism group, namely the finite subgroup of $\mathrm{GL}_{\OK}(E)$ consisting of elements the image of which in $\mathrm{GL}_\C (E_\sigma)$ is unitary with respect to $\Vert.\Vert_\sigma$ for every embedding $\sigma: K \hookrightarrow \C$.

Observe that, if $\olF$ denotes another hermitian vector bundle over $\Spec \OK,$ of positive rank, there is a natural injection of automorphism groups:
$$
\begin{array}{rcl}
  \Aut \olE & \longrightarrow   & \Aut (\olE \otimes \olF)  \\
  \phi & \longmapsto  & \phi \otimes \mathrm{Id}_F.   
\end{array}
$$ 

The following Proposition is a straightforward consequence of the invariance of Harder-Narasimhan filtrations under automorphism groups (compare \cite{BostBour96}, Proposition A.3):

\begin{prop}\label{PropirredAut} Let $\olE$ be a hermitian vector bundle of positive rank over $\Spec \OK$ over such that the natural representation
$$\Aut \olE \hookrightarrow \mathrm{GL}_{\OK} (E) \hookrightarrow \mathrm{GL}_K (E_K)$$
of $\Aut \olE$ in $E_K$ is absolutely irreducible.

For any  hermitian vector bundle of positive rank $\olF$ over $\Spec \OK,$ of    canonical filtration $(F_i)_{0\leqslant i \leqslant N}$, the canonical filtration  of $\olE \otimes \olF$ is $(E \otimes F_i)_{0\leqslant i \leqslant N}$.

In particular $\olE$ is semistable, and for any semistable  hermitian vector bundle $\olF$ over $\Spec \OK$, $\olE \otimes \olF$ is semistable.

\end{prop}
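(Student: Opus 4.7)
The plan is to exploit the canonicity of the Harder-Narasimhan filtration together with Schur's lemma applied to the absolutely irreducible representation of $\Aut \olE$ on $E_K$.

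First, I recall that the canonical filtration of any hermitian vector bundle over $\Spec \OK$ is uniquely characterized by its slope properties and is therefore preserved by every automorphism of the underlying hermitian bundle. The natural embedding $\phi \longmapsto \phi \otimes \mathrm{Id}_F$ of $\Aut \olE$ into $\Aut(\olE \otimes \olF)$ thus stabilizes each step $V_i$ of the canonical filtration of $\olE \otimes \olF$.

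Second, I analyze these invariant submodules via absolute irreducibility. After tensoring with $K$, the subspace $V_{i,K} \subset E_K \otimes_K F_K$ is invariant under the action of $\Aut \olE$ on the first factor (trivial on the second). Viewed in this way as an $\Aut \olE$-representation, $E_K \otimes_K F_K$ is $E_K$-isotypic; absolute irreducibility gives $\mathrm{End}_{\Aut \olE}(E_K) = K$, so any $\Aut \olE$-invariant $K$-subspace of $E_K \otimes_K F_K$ has the form $E_K \otimes_K W_{i,K}$ for a unique $W_{i,K} \subset F_K$. Setting $W_i := F \cap W_{i,K}$ (saturated in $F$), the flatness of $E$ over $\OK$ together with the saturation of $V_i$ in $E \otimes F$ yields the identification $V_i = E \otimes_{\OK} W_i$ as $\OK$-submodules of $E \otimes F$.

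Third, I read off slopes. The additivity identity $\mua(\olE \otimes \overline{W_i/W_{i-1}}) = \mua(\olE) + \mua(\overline{W_i/W_{i-1}})$ and the strict decrease of the HN slopes of $\olE \otimes \olF$ force the strict decrease of $\mua(\overline{W_i/W_{i-1}})$. Moreover, each quotient $\olE \otimes \overline{W_i/W_{i-1}} \simeq \overline{V_i/V_{i-1}}$ is semistable as an HN successive quotient; given any submodule $\overline{U}$ of $\overline{W_i/W_{i-1}}$, the inclusion $\olE \otimes \overline{U} \hookrightarrow \olE \otimes \overline{W_i/W_{i-1}}$ combined with additivity of slopes implies $\mua(\overline{U}) \leqslant \mua(\overline{W_i/W_{i-1}})$, whence $\overline{W_i/W_{i-1}}$ is itself semistable. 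Therefore $(W_i)$ is precisely the canonical filtration of $\olF$. The last assertions are then immediate: if $\olF$ is semistable then its HN filtration is $0 \subsetneq F$, so that of $\olE \otimes \olF$ is $0 \subsetneq E \otimes F$, meaning $\olE \otimes \olF$ is semistable; specializing further to $\olF = \overline{\OK}$ yields the semistability of $\olE$.

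The main obstacle, conceptually minor but requiring some care, is step two: passing from the Schur-type decomposition of $V_{i,K}$ over $K$ to the integral identification $V_i = E \otimes_{\OK} W_i$ inside $E \otimes F$. This relies on the torsion-freeness of $E \otimes F$ as an $\OK$-module together with the saturation of $V_i$, and is the only step in the argument that is not purely representation-theoretic in nature.
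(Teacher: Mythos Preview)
Your proof is correct and follows precisely the approach indicated in the paper, which merely states that the proposition is ``a straightforward consequence of the invariance of Harder-Narasimhan filtrations under automorphism groups'' without providing details. Your elaboration --- invoking Schur's lemma via absolute irreducibility to identify each $\Aut\olE$-invariant step $V_i$ as $E\otimes W_i$, then reading off the HN properties of $(W_i)$ from those of $(V_i)$ by additivity of slopes --- is exactly the intended argument, carried out carefully.
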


Hermitian vector bundles $\olE$ such that the natural representation of $\Aut \olE$ in $E_K$ is absolutely irreducible naturally arise. For instance, diverse remarkable classical euclidean lattices --- seen as hermitian vector bundles over $\Spec \Z$ --- satisfy this condition, for instance the root lattices  and the Leech lattice (\cf \cite{ConwaySloane99}). Other examples of such hermitian vector bundles are given by the hermitian vector bundles defined by sections of ample line bundles over (semi-)abelian schemes over $\Spec \OK$ (\cf \cite{BostBour96}, \cite{Pazuki09}). 

\subsection{}\label{rk2} We now turn to Problems \ref{mumaxconj}--\ref{HNtens} % and \ref{semistable}
for rank two vector bundles. We want to explain why, in this case, they have positive answers --- that is, explicitly:

\begin{prop}\label{proprk2}Let $K$ be a number field and let $\olE$ and $\olF$ be two hermitian vector bundles of rank two over $\Spec \OK$.

For any $\OK$-submodule of $E\otimes F$ of positive rank, we have:
\begin{equation}\label{murk2}
\mua(\olV) \leqslant \mua_{\max}(\olE) + \mua_{\max}(\olF).
\end{equation}
\end{prop}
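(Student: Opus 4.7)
Since $\rk E = \rk F = 2$, the saturation of $V$ in $E \otimes F$ has rank $r \in \{1, 2, 3, 4\}$, and since replacing $V$ by its saturation only increases $\mua(\olV)$, I may assume $V$ is saturated. The cases $r = 1$ and $r = 4$ are immediate: the former is exactly Proposition~\ref{sudegtens}, and the latter forces $V = E\otimes F$ with $\mua(\olV) = \mua(\olE) + \mua(\olF)$. For $r = 3$, the hermitian quotient $\overline{Q} := (\olE\otimes\olF)/\olV$ is a line bundle, and dualizing produces an inclusion $\overline{Q}^\vee \hookrightarrow \olE^\vee \otimes \olF^\vee$; Proposition~\ref{sudegtens} combined with $\mua_{\max}(\olE^\vee) = -\mua_{\min}(\olE)$ gives $\ndeg\overline{Q} \geqslant \mua_{\min}(\olE) + \mua_{\min}(\olF)$. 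Plugging this into $\ndeg(\olE\otimes\olF) = \ndeg\olV + \ndeg\overline{Q}$ and using the identity $\mua_{\min}(\olE) = 2\mua(\olE) - \mua_{\max}(\olE)$ valid for $\rk E = 2$ (and analogously for $F$) delivers $\mua(\olV) \leqslant \mua_{\max}(\olE) + \mua_{\max}(\olF)$ in a single arithmetic computation.

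The essential case is $r = 2$, for which I plan to exploit the orthogonal decomposition of hermitian vector bundles
$$
\Lambda^2(\olE\otimes\olF)\;\simeq\;\bigl(S^2\olE\otimes\Lambda^2\olF\bigr)\oplus\bigl(\Lambda^2\olE\otimes S^2\olF\bigr),
$$
viewed inside $(\olE\otimes\olF)^{\otimes 2}\simeq\olE^{\otimes 2}\otimes\olF^{\otimes 2}$: the orthogonality at each archimedean place reduces to the standard orthogonality of $S^2 X$ and $\Lambda^2 X$ inside $X^{\otimes 2}$ for any hermitian space $X$. The non-zero determinant line $\det\olV = \Lambda^2\olV$ embeds isometrically into $\Lambda^2(\olE\otimes\olF)$, so at least one of its two orthogonal projections is non-zero; by the symmetry between $E$ and $F$ I may assume it is the projection onto $S^2\olE\otimes\Lambda^2\olF$. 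Since orthogonal projections are norm-contracting at each archimedean place, the standard slope inequality for a norm-contracting non-zero morphism of hermitian line bundles yields
$$
\ndeg(\det\olV)\;\leqslant\;\udeg(S^2\olE\otimes\Lambda^2\olF).
$$

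To conclude, the isometric inclusion $S^2\olE\otimes\Lambda^2\olF \hookrightarrow \olE^{\otimes 2}\otimes\Lambda^2\olF$, the fact that $\Lambda^2\olF$ is a hermitian line bundle of slope $2\mua(\olF)$, and Proposition~\ref{sudegtens} applied to $\olE\otimes\olE$ together give
$$
\udeg(S^2\olE\otimes\Lambda^2\olF)\;\leqslant\;\udeg(\olE\otimes\olE) + 2\mua(\olF)\;\leqslant\;2\mua_{\max}(\olE) + 2\mua(\olF).
$$
Chaining these, $2\mua(\olV) = \ndeg(\det\olV) \leqslant 2\mua_{\max}(\olE) + 2\mua(\olF) \leqslant 2\mua_{\max}(\olE) + 2\mua_{\max}(\olF)$, giving the conclusion. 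The main technical delicacy is to keep track of the normalization conventions (Gram determinant on $\det\olV$, induced sub-object metrics on $S^2$ and $\Lambda^2$ within tensor powers) so that the three ingredients used above -- the isometric embeddings, the orthogonality of the decomposition, and the norm-contractivity of the projection -- all hold simultaneously at every archimedean place.
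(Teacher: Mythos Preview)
Your treatment of the cases $r\in\{1,3,4\}$ is correct and matches the paper's (the paper phrases the $r=3$ reduction via Lemma~\ref{Lem:dualite}, which is exactly your duality argument).

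For $r=2$ your approach is genuinely different from the paper's, and it has a real gap hiding behind the ``normalization delicacy'' you flag. Concretely, the orthogonal decomposition you invoke fails to be a decomposition of \emph{$\OK$-modules}: with your stated conventions (induced sub-object metrics, so $\Lambda^2$ means alternating tensors and $S^2$ means symmetric tensors inside the tensor square), one has
\[
\text{Sym}^2E\otimes\text{Alt}^2F\ \oplus\ \text{Alt}^2E\otimes\text{Sym}^2F\ \subsetneq\ \text{Alt}^2(E\otimes F)
\]
with index a power of $2$. For instance with $\OK=\Z$ and standard bases, the element $(e_1\otimes f_1)\otimes(e_2\otimes f_2)-(e_2\otimes f_2)\otimes(e_1\otimes f_1)$ lies in $\text{Alt}^2(E\otimes F)$ but its two ``projections'' are $\tfrac12(e_1\otimes e_2+e_2\otimes e_1)\otimes(f_1\otimes f_2-f_2\otimes f_1)$ and the symmetric counterpart, which are not integral. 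Thus the projection you need is not a morphism of $\OK$-modules. If instead you take the quotient convention on $S^2E$ (so that the map $(e\otimes f)\wedge(e'\otimes f')\mapsto(ee')\otimes(f\wedge f')$ \emph{is} integral and has archimedean norm $\leqslant1$), then the ``isometric inclusion $S^2\olE\hookrightarrow\olE^{\otimes2}$'' is no longer available, and in fact $\udeg(S^2\olE_{\mathrm{quot}})>2\mua_{\max}(\olE)$ can occur: for $\olE=(\Z^2,\|\cdot\|_{\mathrm{std}})$ the line $\Z\cdot(e_1e_2)$ has quotient norm $1/\sqrt2$, hence degree $\tfrac12\log2>0=2\mua_{\max}(\olE)$. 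Either way your chain of inequalities loses a term of order $\log 2$, and in the borderline example $V=\OK(e_1\otimes f_1)\oplus\OK(e_2\otimes f_2)$ (where $\mua(\olV)=\mua_{\max}(\olE)+\mua_{\max}(\olF)$ with equality) this loss is fatal.

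The paper's argument for $r=2$ avoids all of this by a geometric route: it intersects the line $\PP_{\rm sub}(V)$ with the quadric $\cQ$ of split tensors in $\PP_{\rm sub}(E\otimes F)$. After a finite extension of $K$ (harmless by Proposition~\ref{arpullback}) one obtains a point $P=[L\otimes M]$ in this intersection with $L\subset E$, $M\subset F$ saturated rank-one submodules; then the projection $V\to(E/L)\otimes F$ has kernel exactly $L\otimes M$ and archimedean norms $\leqslant1$, and a short d\'evissage using the rank-one case on the image yields \eqref{murk2}. This bypasses any symmetric/exterior-power bookkeeping.
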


This result is actually contained in the more general results established in the sequel. However some basic, but important, geometric ideas are already displayed in the proof of this simple case, that avoids the intricacies of the more general situations considered below.

We need to establish (\ref{murk2}) for $V$ saturated of rank 1, 2, or 3. The case $\rk V = 1$  has been dealt with in Proposition \ref{sudegtens}). The case $\rk V = 3$  reduces to  the case $\rk V=1$ by means of the following simple duality result applied to $\olG = \olE \otimes \olF,$ together with the isomorphism $(\olE \otimes \olF)^\vee \simeq \olE^\vee \otimes \olF^\vee$ (compare \cite{deShalit_Parzan}, section 1.5):

\begin{lemm}\label{Lem:dualite} Let $\olG := (G, \Vert.\Vert)$ be a hermitian vector bundle over some arithmetic curve $\Spec \OK$, and let $\olG^\vee := (G^\vee, \Vert. \Vert^\vee)$ denote its dual. 

Let $V$ be some saturated $\OK$-submodule of $G,$ and let $V^\perp$ its orthogonal (saturated)  submodule in $G^\vee$:
$$V^\perp := \left\{ \xi \in G^\vee \mid \xi_{\mid V} = 0\right\}.$$ Equipped with the restrictions of $\Vert.\Vert$ and $\Vert.\Vert^\vee$, they define hermitian vector bundles  $\olV$ and $\olV^\perp$ that fit into a natural short exact sequence which is compatible with metrics :
$$0 \longrightarrow \olV^\perp \longrightarrow \olG^\vee \longrightarrow \olV^\vee \longrightarrow 0.$$

Consequently
$$\dega \olV^\perp = \dega \olV - \dega \olG,$$
and, if $\rk V$ (and consequently $\rk V^\perp$) is $\neq 0$ or $\rk G$,
$$\mua (\olV) \leqslant \mua(\olG) \mbox{ iff } \mua(\olV^\perp) \leqslant \mua(\olG^\vee).$$
\end{lemm}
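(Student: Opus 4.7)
My plan is to (i) build the short exact sequence at the $\OK$-module level by dualizing, (ii) verify archimedean metric compatibility from the hermitian identification $G_\sigma \simeq G_\sigma^\vee$, and (iii) deduce the degree and slope relations by additivity and elementary arithmetic.

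First I would use that $V$ is saturated, so $G/V$ is a torsion-free finitely generated $\OK$-module and hence projective. The tautological sequence $0\to V\to G\to G/V\to 0$ is therefore locally split, and dualizing produces $0\to (G/V)^\vee\to G^\vee\to V^\vee\to 0$, in which $(G/V)^\vee$ coincides with $V^\perp$ by the very definition of the latter.

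Next I would fix a complex embedding $\sigma$ and use the conjugate-linear isometry $G_\sigma\stackrel{\sim}{\to}G_\sigma^\vee$, $v\mapsto\langle\cdot,v\rangle_\sigma$, furnished by the hermitian structure on $G_\sigma$. This identifies $(V^\perp)_\sigma$ inside $G_\sigma^\vee$ with the hermitian orthogonal complement of $V_\sigma$ in $G_\sigma$, so that $G_\sigma^\vee=(V^\perp)_\sigma\oplus (V^\perp)_\sigma^{\perp}$ is an orthogonal decomposition and the restriction map $G_\sigma^\vee\to V_\sigma^\vee$ is zero on the first summand and an isometry on the second. Concretely, this says: the subspace metric on $V^\perp$ inherited from $\olG^\vee$ is the restriction of $\|\cdot\|_\sigma^\vee$ by construction, and the quotient metric induced on $V^\vee$ coincides with the dual of the subspace metric on $V$. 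This is exactly ``compatibility with metrics''. I expect this step, while harmless, to be the only part requiring real care, since one must translate between the definitions of $V^\perp$, of the dual metric, and of the hermitian orthogonal complement.

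From this the last two assertions are formal. Additivity of the Arakelov degree in metrically compatible short exact sequences yields $\dega \olG^\vee=\dega \olV^\perp+\dega \olV^\vee$; combined with the standard identities $\dega\olG^\vee=-\dega\olG$ and $\dega\olV^\vee=-\dega\olV$ this gives $\dega\olV^\perp=\dega\olV-\dega\olG$. For the slope equivalence, note that $\rk V^\perp=\rk G-\rk V$ is positive by hypothesis; a one-line manipulation using the degree identity and $\mua(\olG^\vee)=-\mua(\olG)$ shows that each of the inequalities $\mua(\olV)\leqslant\mua(\olG)$ and $\mua(\olV^\perp)\leqslant\mua(\olG^\vee)$ is equivalent to $\rk G\cdot\dega\olV\leqslant\rk V\cdot\dega\olG$, and hence they are equivalent to one another.
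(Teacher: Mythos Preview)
Your proof is correct and follows the natural line of argument. The paper itself states this lemma without proof, treating it as a standard duality fact (with a pointer to \cite{deShalit_Parzan}, \S1.5), so your write-up in fact supplies what the paper omits; the three steps you outline---exactness at the module level from saturation, archimedean metric compatibility via the hermitian identification, and the degree/slope bookkeeping---are exactly the expected verification.
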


To handle the case $\rk V =2,$ we consider the quadratic form $\det_{E,F}$ on $E\otimes F$ with values in the line $\Lambda^2 E \otimes \Lambda^2 F$ --- that is the element of $S^2(E\otimes F)^\vee \otimes \Lambda^2 E \otimes \Lambda^2 F$  which maps an element $\phi$ of $E \otimes F \simeq \Hom (E^\vee, F)$ to its determinant $\det \phi$ in $\Hom(\Lambda^2 E^\vee, \Lambda^2 F)\simeq \Lambda^2 E \otimes \Lambda^2 F$ --- and the relative position of the projective line $\PP_{\rm sub}(V)$ and of the quadric $\cQ$ of equation $\det_{E,F} = 0,$ that parametrizes ``simple tensors'' in $E \otimes F$, in the projective space $\PP_{\rm sub}(E \otimes F)$.

One easily checks that $\PP_{\rm sub}(V)$ is contained in $\cQ$ precisely when $V$ is of the form $V = L \otimes F$ for some saturated $\OK$--submodule $L$ of rank 1 in $E$, or of the form $V = E \otimes M$ for some saturated $\OK$--submodule $M$ of rank 1 in  $F$. When this holds, we have :
$$\mua(\olV) = \mua(\olL) + \mua(\olF) \leqslant \mua_{\max}(\olE) + \mua (\olF)$$
or 
$$\mua(\olV) = \mua(\olF) + \mua(\olM) \leqslant \mua(\olE) + \mua_{\max}(\olF).$$

Suppose now that $\PP_{\rm sub}(V)$ is not contained in $\cQ$. There exists some $\overline{K}$-rational point in the intersection of the line $\PP_{\rm sub}(V)$ and of the hypersurface $\cQ$ (the scheme 
$(\PP_{\rm sub}(V)\cap\cQ)_K$ is actually a zero dimensional scheme of length 2 over $K$). After possibly replacing the field $K$ by some finite extension --- which is allowed according to the invariance of slopes under  extension of the base field --- we may assume that there exist some rational point $P$ in this intersection. The saturated $\OK$-submodule of $V$ defined by the $K$-line attached to $P$ is of the form $L \otimes M$ where $L$ (resp. $M$) denotes a saturated $\OK$-submodule of rank 1 in $E$ (resp. in $F$). The composite morphism
$$p : V \hookrightarrow E \otimes F \twoheadrightarrow E/L \otimes F$$
is non-zero (indeed $V \not\subset L \otimes F,$ since $\PP_{\rm sub}(V) \not\subset \cQ$) and its kernel contains $L\otimes M$. Consequently its image ${\rm im\,} p$ is a $\OK$-submodule of rank 1 in $E/L \otimes F$ and its kernel $\ker p$ is $L\otimes M.$ Moreover $p : \olV \rightarrow \overline{E/L} \otimes \olF$ is of archimedean norms $\leqslant 1$. This implies:
$$\ndeg \overline{{\rm im\,} p}  \leqslant \mua_{\max}(\overline{E/L}\otimes \olF) = \ndeg \overline{E/L} + \mua_{\max} (\olF)$$
and
$$\ndeg \olV \leqslant \ndeg \olL \otimes \olM  + \ndeg \overline{{\rm im\,}p}.$$ 
As $\ndeg \olL \otimes \olM = \ndeg \olL  + \ndeg  \olM$ and $\ndeg \overline{E/L} = \ndeg \overline{E} - \ndeg \overline{L},$ we finally obtain:
$$\ndeg \olV \leqslant \ndeg \olM + \mua_{\max}(F) \leqslant \mua_{\max}(\olE) + \mua_{\max}(\olF).$$
This completes the proof of (\ref{murk2}) and of Proposition \ref{proprk2}.

Observe that, even if one wants to establishes the upper bound  (\ref{murk2}) for euclidean lattices only (that is, when $K=\Q$), the above arguments uses the formalism of hermitian vector bundles over $\Spec \OK$ for larger number fields : the introduction of the point $P$ requires in general to introduce a quadratic extension of the initial base field $K$.

It is also worth noting that a refinement of these arguments  shows that, when 
$(\PP_{\rm sub}(V)\cap\cQ)_K$ is a \emph{reduced} zero-dimensional scheme  --- or equivalently when \emph{the line $\PP_{\rm sub}(V)$ and the quadric $\cQ$ meet in precisely two points over} $\overline{K}$, or when the \emph{Pl\"ucker points} $\Lambda^2 V_{\olK}\setminus \{0\}$ of $V_{\olK}$ in $\Lambda^2(E\otimes F)_{\olK}$ are \emph{semi-stable} with respect to the action of $\mathrm{SL}(E_{\olK}) \times \mathrm{SL}(F_{\olK})$ in the sense of geometric invariant theory --- then the following  stronger inequality holds:
\begin{equation}\label{2stable}
\ndeg \olV \leqslant  \mua(\olE) + \mua(\olF).
\end{equation}

Indeed, in this situation there exists a ${\olK}$-basis $(e_1,e_2)$ (resp. $(f_1,f_2)$) of $E_{\olK}$ (resp. $F_{\olK}$) such that $(e_1\otimes f_1, e_2 \otimes f_2)$ is a $\olK$-basis of $V_{\olK}$. Actually this holds precisely when
$$\left\{[e_1\otimes f_1], [e_2\otimes f_2]\right\} = (\PP_{\rm sub}(V)\cap\cQ)(\olK).$$
After a possible finite extension of $K$, we may assume that $(e_1,e_2)$ and $(f_1,f_2)$) are actually $K$-bases of $E_{K}$ and $F_{K}$. Then the inequality (\ref{2stable}) is a consequence of the following local inequalities, valid with obvious notations for every place $v$ of $K$, $p$-adic or archimedean:
\begin{equation}\label{rk2loc}
\Vert e_1\otimes f_1 \wedge e_2 \otimes f_2 \Vert_v \geqslant \Vert e_1 \wedge e_2\Vert_v . \Vert f_1 \otimes f_2\Vert_v. 
\end{equation}

To establish (\ref{rk2loc}), by homogeneity, we may assume that $\Vert e_1 \Vert_v = \Vert f_1 \Vert_v =1$ and choose linear forms of norm $1$, 
$$\eta : E_{K_v} \longrightarrow K_v \mbox{ and } \phi :E_{K_v} \longrightarrow K_v$$
such that
$$\eta(e_1) = \phi(f_1) = 0.$$
Then 
\begin{equation}
\label{rk2encore}
\vert \eta(e_2)\vert_v = \Vert e_1 \wedge e_2\Vert_v
\mbox{ and } \vert \phi(f_2) \vert_v = \Vert f_1 \otimes f_2\Vert_v.
\end{equation}
Moreover $\eta \otimes \phi$ is a linear form of norm $1$ on $(E\otimes F)_{K_v}$ and vanishes on the vector $e_1 \otimes f_1$, which satisfies $\Vert e_1 \otimes f_1 \Vert_v = 1.$ Consequently the restriction of $\eta \otimes \phi$ to $V_{K_v}$ has norm $\leqslant 1$, and
\begin{equation}
\label{rk2enfin}
\vert (\eta \otimes \phi) (e_2 \otimes f_2 )\vert_v \leqslant  \Vert e_1\otimes f_1  \wedge e_2\otimes f_2\Vert_v.
\end{equation}
Finally (\ref{rk2loc}) follows from (\ref{rk2encore}) and (\ref{rk2enfin}).

\subsection{}\label{ElsdSP}

Additional positive evidence for a positive answer to Problems Problems \ref{mumaxconj}--\ref{HNtens}
has been obtained by Elsenhans (in his G\"ottingen Diplomarbeit \cite{Elsenhans01} under the supervision of U. Stuhler) and by   de Shalit and Parzanchevski (\cite{deShalit_Parzan}). 

They show that the inequality
\begin{equation}\label{Basic}
\mua(\olV) \leqslant \mua_{\max}(\olE) + \mua_{\max}(\olF)
\end{equation}
holds for any sublattice $V$ in the tensor product of two euclidean lattices $\olE$ and $\olF$ under the following conditions on their ranks :
$\rk E =2$ and $\rk V \leqslant 4$ (\cite{Elsenhans01}), or $\rk V \leqslant 3$ (\cite{deShalit_Parzan}). 

The arguments in  \cite{Elsenhans01} and \cite{deShalit_Parzan} rely on some specific features of the theory of euclidean lattices and of their reduction theory in low ranks, and consequently do not generalize to hermitian vector bundles over general arithmetic curves $\Spec \OK$. 

Besides,  these arguments make conspicuous the role of the \emph{tensorial rank} of elements of $E\otimes F$ when investigating the inequality (\ref{Basic}).
(The tensorial rank of a vector $\alpha$ in $E\otimes F$ is defined as the minimal integer $m\geqslant 0$ such that $\alpha$ can be written in the form
$x_1\otimes y_1+\cdots+x_m\otimes y_m$
with $x_1,\ldots,x_m\in E$ and $y_1,\ldots,y_m\in F$. It is also the rank of $\alpha$ considered as a homomorphism of $\mathbb Z$-modules from $E^\vee$ to $F$.  Note that split tensor vectors --- namely tensor vectors of tensorial rank $1$ --- have also been considered by Kitaoka  to study the first minimum of tensor product lattices; see \cite{Kitaoka93} for a survey.)

\subsection{}\label{approx}Concerning the ``approximate validity'' of the additivity of maximal slopes under tensor product (that is, of equality (\ref{equamax}) in Problem \ref{mumaxconj}) established by means of Zhang's version of the ``absolute Siegel's Lemma'' stated in Proposition \ref{absMink}, the best result available in the literature seems to be the one in  \cite{Gaudron_Remond}:
\begin{equation}\label{Equ:Gaudron-Remond}\hat{\mu}_{\max}(\overline E_1\otimes\cdots\otimes\overline E_n)\leqslant\sum_{i=1}^n\Big(\hat{\mu}_{\max}(\overline E_i)+\frac 12\log(\rang(E_i))\Big).\end{equation}
(Observe that \eqref{Equ:Gaudron-Remond} does not follow formally from the special case when $n=2$; a similar observation applies to  \eqref{Equ:Chen} \emph{infra}.) 

Another method for establishing that type of inequality has been developed by the second author (\cite{Chen_pm}). This method, which involves the geometric invariant theory of (the geometric generic fibers of ) subbundles in tensor products,  may be considered as an analogue in Arakelov geometry of the method of Ramanan and Ramanathan \cite{Ramanan_Ramanathan} (based on previous results of Kempf \cite{Kempf78}), and of its elaboration by Totaro in \cite{Totaro96}. It  is directly related to the study of geometric invariant theory in the context of Arakelov geometry and of related height inequalities by Burnol \cite{Burnol92}, Bost \cite{Bost94}, Zhang \cite{Zhang96}, and Gasbarri \cite{Gasbarri00}. 

The key point of the method consists of estimating the Arakelov degree of some hermitian line subbundle of a tensor product under some additional geometric semistability condition. By using the classical theory of invariants   (cf. \cite[Ch. III]{Weyl} and \cite[Appendix 1]{Atiyah_Bott_Patodi}) to make ``explicit'' the geometric semistabiltiy condition, one derives an upper bound for the Arakelov degree of the given hermitian line subbundle. The general case can be reduced to the former one (where some geometric semistability condition is satisfied) by means of Kempf's instability flag. By this technique, the upper bound
\begin{equation}\label{Equ:Chen}\hat{\mu}_{\max}(\overline E_1\otimes\cdots\otimes\overline E_n)\leqslant\sum_{i=1}^n\Big(\hat{\mu}_{\max}(\overline E_i)+\log(\rang(E_i))\Big)
\end{equation}
has been proved in \cite{Chen_pm}
for any family hermitian vector bundles $(\overline E_i)_{i=1}^n$ of positive rank over an arbitrary arithmetic curve $\Spec \OK$.

Beside avoiding the use of higher dimensional Arakelov geometry, implicit via Zhang's version of the ``absolute Siegel's Lemma''  in the previous approach, this method also shows that a positive answer to Problems \ref{mumaxconj}-\ref{HNtens} is equivalent to a positive answer to:  

\begin{enonce}{Problem}\label{mustableconj} Let $K$ be a number field and $(\overline E_i)_{i=1}^n$ a family of  hermitian vector bundles of positive rank over $\Spec \OK.$

Is it true that, for any $\OK$-submodule $V$ of positive rank in $E_1\otimes\cdots\otimes E_n$ such that the Pl\"ucker point of $V_{\olK}$ is semistable under the action of 
$\mathrm{SL}(E_{1,\olK})\times\cdots\times \mathrm{SL}(E_{n,\olK})$, the inequality 
\begin{equation}\label{equamustableconj} 
\mua (\olV) \leqslant\sum_{i=1}^n\hat{\mu}(\overline E_i)
\end{equation}
hold ?
\end{enonce}

\subsection{}The three lines of approach to the behavior of slopes of hermitian vector bundles discussed in paragraphs \ref{approx} and \ref{ElsdSP} may seem to be rather distinct at first sight. They are however closely related. For instance the first two approaches rely on some version of reduction theory  and of   Minkowski's first theorem. The tensorial rank, essential in the arguments of 
\cite{Elsenhans01} and \cite{deShalit_Parzan}, occurs naturally in the geometric invariant theory of tensor products, as it precisely distinguishes the orbits of $\mathrm{SL}(E_{\olK})\times \mathrm{SL}(F_{\olK})$ acting on $E_{\olK} \otimes F_{\olK}$ . It also appears in the proof of (\ref{2stable}) above and in 
 in  \cite[\S 5.2]{Gaudron_Remond}.  
 
 Moreover, as we show in the sequel, a suitable combination of these approaches leads to new evidences for  a positive answer to Problems \ref{mumaxconj}-\ref{HNtens} and \ref{mustableconj}, encompassing previously known results. It  also provides new insights on possible refinements of the conjectural slopes inequalities formulated in these problems.
 
Our  results may be summarized  as follows:

\begin{enonce*}{Theorem A} For any two hermitian vector bundles of positive rank  $\overline E$ and $\overline F$ 
over some arithmetic curve $\Spec \OK$, we have :
\[\hat{\mu}_{\max}(\overline E\otimes\overline F)
\leqslant\hat{\mu}_{\max}(\overline E)+\hat{\mu}_{\max}(\overline F)+\frac 12\min\big(\ell(\rang(E)),\ell(\rang(F))\big).\]
\end{enonce*}
(Recall that, for any integer $r\geqslant 1$, $\ell(r):= \sum_{2\leqslant j\leqslant r} \frac{1}{j} \leqslant \log r$.)

\begin{enonce*}{Theorem B}
Let $\overline E$ and $\overline F$ be two  hermitian vector bundles of positive rank over some arithmetic curve $\Spec \OK$.
For any hermitian vector subbundle $\overline V$ of $\overline E\otimes\overline F$ such that $\rang(V)\leqslant 4$, one has
\[\hat{\mu}(\overline V)\leqslant\hat{\mu}_{\max}(\overline E)+\hat{\mu}_{\max}(\overline F).\] 

Moreover, if $\overline E$ and $\overline F$ are semistable and if $\rang(E)\rang(F)\leqslant 9$, then $\overline E\otimes\overline F$ also issemistable.
\end{enonce*}

Theorem A implies (by induction on $n$) that the following inequality
\[\hat{\mu}_{\max}(\overline E_1\otimes\cdots\otimes\overline E_n)
\leqslant\sum_{i=1}^n\hat{\mu}_{\max}(\overline E_i)+\sum_{i=2}^n\frac 12\log\rang(E_i)\]
holds for any finite collection $(\overline E_i)_{i=1}^n$ of  hermitian vector bundles of positive ranks on $\Spec\mathcal O_K$.
This result has been announced in  2007 (cf. \cite{Bost07Bis}), together with the related  upper bound for the Arakelov degree of line subbundles in the tensor product of several normed vector bundles stated to Corollary \ref{Cor:tensor} {\it infra}. 

The proof of these results (and the actual formulation of Corollary \ref{Cor:tensor}) uses $\varepsilon$-tensor products of hermitian vector bundles. For any two hermitian vector bundles $\overline E$ and $\overline F$, the $\varepsilon$-tensor product $\overline E\otimes_{\varepsilon}\overline F$ is an adelic vector bundle (in the sense of Gaudron \cite{Gaudron07}). Its construction is  similar to the one of the usual hermitian tensor product $\overline E\otimes\overline F$, except that, at archimedean places,  the hermitian tensor product norms (or equivalently,  the Hilbert-Schmidt norms when we identify $E\otimes F$ to $\Hom(E^\vee,F)$) are replaced by the $\varepsilon$-tensor product norms (that is, the operator norms on
$E\otimes F \simeq \Hom(E^\vee,F)$).

It happens that various slope inequalities, already known  for hermitian tensor products, still hold for $\varepsilon$-tensor product. For example, for any hermitian line subbundle $\overline M$ of $\overline E\otimes_{\varepsilon}\overline F$, we have the following variant of the inequality in Proposition  \ref{sudegtens}.
\begin{equation}\label{Equ:rg1}\ndeg(\overline M)\leqslant\hat{\mu}_{\max}(\overline E)+\hat{\mu}_{\max}(\overline F).\end{equation}
This formula can actually be considered as a slope inequality, concerning morphisms between hermitian vector bundles (see the proof of Proposition \ref{Pro:upp2}). Moreover it is stronger than the initial version in Proposition  \ref{sudegtens} (where  $\overline M$ would be replaced by the  hermitian line subbundle of $\overline E\otimes\overline F$ with the same underlying $\OK$-module $M$) since the operator (or $\varepsilon$-) norm is not larger than the Hilbert-Schmidt (hermitian) norm. A refinement of the inequality (\ref{Equ:rg1}) leads to a short proof of Theorem A.

As explained above, the inequality \eqref{Equ:rg1} implies a particular case of Theorem B where the rank of $V$ is $1$. It seems delicate to study the slope of hermitian vector subbundles of a tensor product by using the technic of $\varepsilon$-tensor products, since a vector subbundle of rank $\geqslant 2$ of an $\varepsilon$-tensor product is not necessarily hermitian. Note however that the ratio of the Hilbert-Schmidt (hermitian) norm and the operator ($\varepsilon$-) norm of a vector in a tensor product of two Hermitian spaces is controlled by the tensorial rank of the vector. Based on this observation, we introduce a version of the inequality \eqref{Equ:rg1} concerning again the  hermitian tensor product,   stronger than what would be \emph{a priori} expected from  the conjectural estimates in Problems \ref{mumaxconj}-\ref{HNtens}. Namely we show that, for any hermitian line subbundle $\overline M$ of $\overline E\otimes\overline F$, one has
\begin{equation}\label{Equ:lM}
\ndeg(\overline M)\leqslant\hat{\mu}_{\max}(\overline E)+\hat{\mu}_{\max}(\overline F)-\frac 12\log\rho(M),\end{equation}
where $\rho(M)$ is the tensorial rank of any non-zero vector in $M$. 

Actually in the ``extreme case'' where $\rho(M)=\rang(E)=\rang(F)$ --- this condition is easily seen to be equivalent to the semistability of $M_{\olK}$ under the action of $\mathrm{SL}(E_{\olK}) \times\mathrm{SL}(F_{\olK})$ in the sense of the geometric invariant theory --- we prove that 
\begin{equation*}%\label{Equ:lM}
\ndeg(\overline M)\leqslant\hat{\mu}(\overline E)+\hat{\mu}(\overline F)-\frac 12\log\rho(M).\end{equation*}
This observation suggests that a geometric semistability condition could lead to stronger upper bounds for the slopes of hermitian vector subbundles in the tensor product of  hermitian vector bundles than the ones conjectured in Problem \ref{mustableconj}.

 As emphasized in Totaro \cite{Totaro96},  geometric semistability conditions (for general linear groups and their products) may be interpreted by means of auxiliary filtrations in a purely numerical way, without explicit mention of group actions. This interpretation becomes even  clearer from a probabilistic point of view. Given a non-zero finite dimensional vector space $W$ equipped with a decreasing $\mathbb R$-filtration $\mathcal F$, we define a random variable $Z_{\mathcal F}$ on the probability space $\{1,\ldots,\rang(W)\}$ (equipped with the equidistributed probability measure) as
\[Z_{\mathcal F}(i)=\sup\{t\in\mathbb R\,|\,\rang(\mathcal F^tW)\geqslant i\}.\]
The expectation of $\mathcal F$, denoted $\mathbb E[\mathcal F]$, is then defined as the expectation of the random variable $Z_{\mathcal F}$. If $E$ and $F$ are two vector spaces and if $V$ is a non-zero vector subspace of $E\otimes F$, we say that $V$ is \emph{both-sided semistable} if for any filtration $\mathcal F$ of $E$ and any filtration $\mathcal G$ of $F$, one has $\mathbb E[(\mathcal F\otimes\mathcal G)|_V]\leqslant \mathbb E[\mathcal F]+\mathbb E[\mathcal G]$, where $(\mathcal F\otimes\mathcal G)|_V$ is the filtration of $V$ such that \[(\mathcal F\otimes\mathcal G)|_V^t(V)=V\cap\sum_{a+b=t}\mathcal F^a(E)\otimes\mathcal G^b(F).\]
The conjectural slope inequality (\ref{equamaxbis}) implies that, if $\overline E$ and $\overline F$ are two hermitian vector bundles and if $\overline V$ is a non-zero hermitian vector subbundle of $\overline E\otimes\overline F$ such that $V$ is both-sided semistable, then:
\[\hat{\mu}(\overline V)\leqslant\hat{\mu}(\overline E)+\hat{\mu}(\overline F).\]
Actually a refinement of this implication where we only assume that the conjecture holds ``up to $\rang(V)$'' is established in Theorem \ref{Thm:semi}. The idea is similar to that in previous works such as \cite{Bost94,Zhang96,Gasbarri00}. However, the proof relies on Harder-Narasimhan filtrations indiced by $\mathbb R$ and is elementary, in the sense that it does not involve explicitly geometric invariant theory. By using Kempf's instability flag in this context, we show  that, in order to establish the inequality
\[\hat{\mu}(\overline V)\leqslant\hat{\mu}_{\max}(\overline E)+\hat{\mu}_{\max}(\overline F),\]
it suffices to prove the same inequality under the supplementary conditions that $\overline E$ and $\overline F$ are stable and that $V$ is both-sided stable (see Theorem \ref{Thm:recurrence}).

 In the case where $\rang(V)\leqslant 4$, the both-sided stability condition of $V$ implies some constraints on the successive tensorial ranks of vectors in $V$ and permits to conclude, by using inequalities similar to \eqref{Equ:lM}, together with a version of the ``absolute reduction theory'' for $V$, derived from Zhang's theory of arithmetic ampleness, that takes tensorial rank into account.

In view of the methods that we apply in this article (geometric
semistability argument and absolute Siegel's lemma), it is more
convenient to work with the absolute adelic viewpoint. For this
purpose, we introduce in the second section the notion of hermitian
vector bundles over $\overline{\mathbb Q}$. It is a natural
generalization of the classical notion of hermitian vector bundles
over the spectrum of an algebraic integer ring and more generally
the notion of hermitian adelic vector bundles (non-necessarily pure)
over a number field introduced by Harder and Stuhler \cite{HarderStuhler2003} and Gaudron
\cite{Gaudron07,Gaudron08}. Moreover, a hermitian vector bundle over
$\overline{\mathbb Q}$ can be approximated by a sequence of
hermitian vector bundles in usual sense (but over more and more
large number fields). Most of constructions and results of the slope
theory can be generalized to the framework of hermitian vector
bundles over $\overline{\mathbb Q}$.

The sections 3 and 4 are devoted to diverse upper bounds for the
Arakelov degree of hermitian line subbundle of a tensor product
bundle. We shall prove Theorem A in \S4. The fifth section contains
a reminder on $\mathbb R$-filtrations of vector spaces. In the sixth
section, we describe the geometric semistability by the language of
$\mathbb R$-filtrations. In the seventh section, we discuss the
relationship between the geometric and arithmetic semistability.
Finally, we prove Theorem B in the last section of the article.

\subsection*{Acknowledgement} We would like to thank \'Eric Gaudron and Ga\"el R\'emond for having communicated to us their article \cite{Gaudron_Remond} and for their careful reading and  valuable remarks on  this work. Part of this project has been effectuated during the visit of both authors to Mathematical Sciences Center of Tsinghua University. We are grateful to the center for hospitality.

\section{Hermitian vector bundles over $\overline{\mathbb Q}$}

In this article, $\overline{\mathbb Q}$ denotes the algebraic
closure of $\mathbb Q$ in $\mathbb C$. Any subfield of
$\overline{\mathbb Q}$ which is finite over $\mathbb Q$ is called a
\emph{number field}. If $K$ is a number field, we denote by
$\Sigma_{K,f}$ (resp. $\Sigma_{K,\infty}$) the set of finite (resp.
infinite) places of $K$, and define $\Sigma_K$ as the disjoint union
of $\Sigma_{K,f}$ and $\Sigma_{K,\infty}$. To each place
$v\in\Sigma_{K}$ we associate an absolute value $|.|_{v}$ which
extends either the usual absolute value of $\mathbb Q$ or some
$p$-adic absolute value $|.|_p$ normalized such that
$|a|_p:=p^{-v_p(a)}$ for any $a\in\mathbb Q^{\times}$, where $v_p$
is the $p$-adic valuation. We denote by $K_v$ the completion of $K$
with respect to the absolute value $|.|_v$ and by $\mathbb C_v$
the completion of an algebraic closure of $K_v$. The absolute value
$|.|_v$ extends in a unique way to $\mathbb C_v$.

\subsection{Norm families}\label{Subsec:norm families}
Let $K$ be a number field. If $E$ is a finite dimensional vector
space over $K$, denote by $\mathcal N_E$ the set of norm families of
the form $h=(h_v)_{v\in\Sigma_K}$, where for any $v$, $h_v$ (which
is also denoted by $\|.\|_{h_v}$ or by $\|.\|_v$) is a norm on
$E\otimes_K\mathbb C_v$, which is invariant by the action of
$\mathrm{Gal}(\mathbb C_v/K_v)$, and is an ultranorm if $v$ is
finite. Denote by $\mathcal H_E$ the subset of $\mathcal N_E$
consisting of those families $h$ such that $h_v$ is hermitian for
any infinite place $v$. The norm families in $\mathcal H_E$ are
called hermtian families. Note that, when $\rang(E)=1$, one has
$\mathcal H_E=\mathcal N_E$.

Let $\mathbf{e}=(e_1,\ldots,e_n)$ be a basis of $E$. It determines a
norm family $h^{\mathbf{e}}$ such that
\[\|\lambda_1e_1
+\cdots+\lambda_ne_n\|_{h^{\mathbf{e}}_v}=
\begin{cases}
\displaystyle\max_{1\leqslant i\leqslant n}|\lambda_i|_v,&\text{$v$ is finite},\\
(|\lambda_1|_v^2+\cdots+|\lambda_n|_v^2)^{1/2},&\text{$v$ is
infinite}.
\end{cases}\]
It is a hermitian family.

\subsubsection{Induced norm families} Let $E$, $F$ and $G$ be finite dimensional vector spaces over $K$.
Any injective $K$-linear map $i:F\rightarrow E$ defines a mapping
$i^*:\mathcal N_E\rightarrow\mathcal N_F$ by restricting norms. It
sends $\mathcal H_E$ to $\mathcal H_F$. Similarly, any surjective
$K$-linear map $p:E\rightarrow G$ defines a mapping $p_*:\mathcal
N_E\rightarrow\mathcal N_G$ by projection of norms. One also has
$p_*(\mathcal H_E)\subset\mathcal H_G$.

\subsubsection{Dual norm families}Let $E$ be a finite dimensional vector space over $K$. Any norm family $h$ on $E$ induces by duality a norm family $h^\vee$ on $E^\vee$, which is in $\mathcal H_{E^\vee}$ if $h\in\mathcal H_E$.

\subsubsection{Direct sum}
Let $E$ and $F$ be two finite dimensional vector spaces over $K$,
and $h^E$ (resp. $h^F$) be a norm family in $\mathcal H_E$ (resp.
$\mathcal H_F$). We define a norm family $h^{E\oplus F}\in\mathcal
H_{E\oplus F}$ as follows. If $v$ is a finite place and $(x,y)\in
E_{\mathbb C_v}\oplus F_{\mathbb C_v}$, one has
\[\|(x,y)\|_{h^{E\oplus F}_v}=\max(\|x\|_{h^E_v},\|y\|_{h^F_v});\]
if $v$ is a infinite place, then
\[\|(x,y)\|_{h^{E\oplus F}_v}=(\|x\|_{h^E_v}^2+\|y\|_{h^F_v}^2)^{1/2}.\]
The norm family $h^{E\oplus F}$ is called the \emph{direct sum} of
$h^E$ and $h^F$, and is denoted by $h^E\oplus h^F$.
\subsubsection{Tensor product}
Let $E$ and $F$ be two finite dimensional vector spaces over $K$.
There are many ``natural ways'' to define a tensor product map from
$\mathcal N_E\times\mathcal N_F$ to $\mathcal N_{E\otimes F}$, see
for example \cite{Gaudron07} Definition 2.10. Here we introduce two
constructions which will be used in the sequel.

The first construction is given by operator norms. We identify
$E\otimes F$ with the space of linear operators $\Hom_K(E^\vee, F)$.
Assume that $h^E$ and $h^F$ are two norm families in $\mathcal N_E$
and in $\mathcal N_F$ respectively, we define
$h^E\otimes_{\varepsilon}h^F$ as the norm family
$(h_v^E\otimes_{\varepsilon}h_v^F)_{v\in\Sigma_K}$, where
$h_v^E\otimes_\varepsilon h_v^F$ denotes the norm of linear
operators from $(E_{\mathbb C_v}^\vee,h_v^{E,\vee})$ to $(F_{\mathbb
C_v},h_v^F)$. Thus one obtains a map $\mathcal N_E\times\mathcal
N_F\rightarrow\mathcal N_{E\otimes F}$. It can be shown that the
operator $\otimes_{\varepsilon}$ is associative (up to isomorphism).
Namely, if $E$, $F$ and $G$ are three finite dimensional vector
spaces over $K$ and $h^E$, $h^F$ and $h^G$ are norm families in
$\mathcal N_E$, $\mathcal N_F$ and $\mathcal N_G$ respectively, then
$h^E\otimes_{\varepsilon}(h^F\otimes_{\varepsilon}h^G)$ identifies
with $(h^E\otimes_{\varepsilon}h^F)\otimes_{\varepsilon}h^G$ under
the canonical isomorphism $E\otimes(F\otimes G)\cong (E\otimes
F)\otimes G$. This permits to define the tensor product for several
norm families. To any finite family $(E_i)_{i=1}^n$ of finite
dimensional vector spaces over $K$ and any
$(h^{E_i})_{i=1}^n\in\prod_{i=1}^n\mathcal N_{E_i}$ we can associate
an element
$h^{E_1}\otimes_{\varepsilon}\cdots\otimes_{\varepsilon}h^{E_n}
\in\mathcal N_{E_1\otimes\cdots\otimes E_n}$. We remind that
$h^{E_1}\otimes_{\varepsilon}\cdots\otimes_{\varepsilon}h^{E_n}$
need not be in $\mathcal H_{E_1\otimes\cdots\otimes E_n}$, even
though each $h^{E_i}$ is in $\mathcal H_{E_i}$. The operator
$\otimes_{\varepsilon}$ is also commutative (up to isomorphism) in
the sense that, for all finite dimensional vector spaces $E$ and $F$
over $K$ equipped with norm families $h^E$ and $h^F$, the norm
families $h^E\otimes_{\varepsilon}h^F$ and
$h^F\otimes_{\varepsilon}h^E$ coincide under the canonical
isomorphism $E\otimes F\cong F\otimes E$.

The second construction is the Hermitian tensor product. Let
$(E_i)_{i=1}^n$ be a family of finite dimensional vector spaces over
$K$. For $(h^{E_i})_{i=1}^n\in\prod_{i=1}^n\mathcal H_{E_i}$, denote
by $h^{E_1}\otimes\cdots\otimes h^{E_n}$ the norm family
$(h_v^{E_1}\otimes\cdots\otimes h_v^{E_n})_{v\in\Sigma_K}$ such
that,
\begin{enumerate}[1)]
\item if $v$ is finite, then $h_v^{E_1}\otimes\cdots\otimes
h_v^{E_n}:=h_v^{E_1}\otimes_{\varepsilon}\cdots\otimes_{\varepsilon}
h_v^{E_n}$,
\item if $v$ is infinite, then $h_v^{E_1}\otimes\cdots\otimes
h_v^{E_n}$ is the usual hermitian product norm on $E_{1,\mathbb
C_v}\otimes\cdots\otimes E_{n,\mathbb C_v}$.
\end{enumerate}
Note that one has $h^{E_1}\otimes\cdots\otimes h^{E_n}\in\mathcal
H_{E_1\otimes\cdots\otimes E_n}$, and for any infinite place $v$,
the following relation holds
\begin{equation}\label{Equ:epsilon}
h_v^{E_1}\otimes_{\varepsilon}\cdots\otimes_{\varepsilon}
h_v^{E_n}\leqslant h_v^{E_1}\otimes\cdots\otimes h_v^{E_n}.
\end{equation}

\subsubsection{Determinant} Let $E$ be a finite dimensional vector space over $K$ and $h\in\mathcal H_E$. Recall that the \emph{determinant} of $E$ is the maximal exterior product $\det E:=\Lambda^{\rang(E)}E$. We denote by $\det h$ the family of determinant norms. Recall that for any finite place $v$, $\det h_v$ coincides with the quotient of the tensor power norm. However, when $v$ is infinite, $\det h_v$ is defined as the hermitian norm such that
\[\langle x_1\wedge\cdots\wedge x_r,y_1\wedge\cdots\wedge y_r\rangle_{\det h_v}=\det (\langle x_i,y_j\rangle_{h_v})_{1\leqslant i,j\leqslant r},\]
where $r$ is the rank of $E$.

\subsubsection{Scalar extension}
Let $E$ be a finite dimensional vector space over $K$ and
$h\in\mathcal N_E$. Assume that $K'/K$ is a finite extension of
number fields. We define $h_{K'}$ as the family
$(h_{K',w})_{w\in\Sigma_{K'}}$ such that, for any place
$w\in\Sigma_{K'}$ lying over some $v\in\Sigma_K$, one has
$h_{K',w}=h_v$ on $E\otimes_K\mathbb C_v=(E\otimes_KK')
\otimes_{K'}\mathbb C_{w}$. Note that $h_{K'}$ is an element in
$\mathcal N_{E\otimes_KK'}$. If $h\in\mathcal H_{E}$, then
$h_{K'}\in\mathcal H_{E\otimes_KK'}$.

\subsubsection{Distance of two norm families}
Let $E$ be a finite dimensional vector space over $K$ and $h$, $h'$
be two norm families in $\mathcal N_E$. For any place $v$ of $K$,
the \emph{local distance} between $h$ and $h'$ at $v$ is defined as
\[d_v(h,h'):=\sup_{0\neq s\in E_{\mathbb C_v}}\big|\log\|s\|_{h_v}-\log\|s\|_{h_v'}\big|\]
The global distance of $h$ and $h'$ is defined as
\[d(h,h'):=\sum_{v\in\Sigma_K}\frac{[K_v:\mathbb Q_v]}{[K:\mathbb Q]}d_v(h,h').\]
Observe that the global distance is invariant by scalar extension.

\subsubsection{Admissible norm families}
Let $E$ be a finite dimensional vector space over $K$. If
$\mathbf{e}$ and $\mathbf{f}$ are two bases of $E$, one has
\[h^{\mathbf{e}}=h^{\mathbf{f}}\quad\text{for all but finitely many $v$}.\]
In particular, one has $d(h^{\mathbf{e}},h^{\mathbf{f}})<+\infty$.
In fact, for any matrix $A\in\mathrm{GL}_n(k)$, there exists a
finite subset $S_A$ of $\Sigma_K$ such that
$A\in\mathrm{GL}_n(\mathcal O_v)$ for any $v\in\Sigma_K\setminus
S_A$, where $\mathcal O_v$ denotes the valuation ring of $\mathbb
C_v$, and $n$ is the rank of $E$.

Denote by $\mathcal N_E^{\circ}$ the subset of $\mathcal N_E$
consisting of norm families $h$ such that there exists a basis (or
equivalently, for any basis) $\mathbf{e}$ of $E$ such that the
equality $h_v=h_v^{\mathbf{e}}$ holds for all but finitely many
places $v$. Define $\mathcal H_E^{\circ}:=\mathcal H_E\cap\mathcal
N_E^{\circ}$. The norm families in $\mathcal N_E^{\circ}$ are called
\emph{admissible families}.

\subsection{Hermtian vector bundles over $\overline{\mathbb Q}$}\label{SubSec:hermitian}

Given a vector space $V$ of finite rank $r$ over $\overline{\mathbb
Q}$. For any number field $K$, we call \emph{model} of $V$ over $K$
any $K$-vector subspace $\mathcal V_K$ of rank $r$ of $V$ which
generates $V$ as a vector space over $\overline{\mathbb Q}$. The
number field $K$ is called the field of definition of the model
$\mathcal V_K$. Note that if $\mathcal V_1$ and $\mathcal V_2$ are
two models of $V$, there exists a number field $K$ which contains
the \emph{fields of definition} of $\mathcal V_1$ and $\mathcal V_2$
and such that the $K$-vector subspaces of $V$ generated by $\mathcal
V_1$ and $\mathcal V_2$ are the same.

Let $V$ be a vector space of finite rank over $\overline{\mathbb
Q}$. We call \emph{norm family} on $V$ any pair $(\mathcal V,h)$,
where $\mathcal V$ is a model of $V$ and $h\in\mathcal N_{\mathcal
V}^\circ$. Note that $(\mathcal V,h)$ is actually an adelic vector
bundle in the sense of \cite{Gaudron07} (non-necessarily pure, see also \cite{Gaudron08}). We say that a norm family
$(\mathcal V,h)$ on $V$ is \emph{hermitian} if $h\in\mathcal
H_{\mathcal V}^\circ$. We say that two \emph{norm families}
$(\mathcal V_1,h_1)$ and $(\mathcal V_2,h_2)$ are equivalent if
there exists a number field $K$ containing the fields of definition
of $\mathcal V_1$ and $\mathcal V_2$ and such that $\mathcal
V_{1,K}=\mathcal V_{2,K}$ and $h_{1,K}=h_{2,K}$. This is an
equivalence relation on the set of all norm families on $V$.

We call \emph{normed vector bundle} over $\overline{\mathbb Q}$ any
pair $\overline V=(V,\xi)$, where $V$ is a finite dimensional vector
space over $\overline{\mathbb Q}$ and $\xi$ is an equivalence class
of norm families on $V$. Any element $(\mathcal V,h)$ in the
equivalence class $\xi$ is called a \emph{model} of $\overline V$. We
say that a normed vector bundle $\overline V$ over
$\overline{\mathbb Q}$ is a hermitian vector bundle if $\xi$ is an
equivalence class of hermitian norm families. A normed vector bundle
of rank $1$ over $\overline{\mathbb Q}$ is necessarily hermitian. It
is also called a \emph{hermitian line bundle}.

\begin{rema}\label{Rem:approx}
We observe immediately from the definition that any adelic vector bundle $(\mathcal V,h)$ over some number field $K$ defines naturally a normed vector bundle $\overline V=(\mathcal V_{\overline{\mathbb Q}},\xi)$ where $\xi$ is the equivalence class of $(\mathcal V,h)$. In particular, any normed (resp. hermitian) vector bundle on $\Spec\mathcal O_K$ determines a normed (resp. hermitian) vector bundle over $\overline{\mathbb Q}$. However, a normed vector bundle over $\overline{\mathbb Q}$ need not come from a normed vector bundle in usual sense over the spectrum of an algebraic integer ring, unless it admits a pure model (see \cite{Gaudron08}). In general, a normed vector bundle $\overline V$ can be approximated by usual normed vecotor bundles in the sense that, for any $\epsilon>0$, there exists a number field $K$, a pure adelic vector bundle $(\mathcal V,h')$, and a model $(\mathcal V,h)$ of $\overline V$ over $K$ whose underlying $K$-vector space is $\mathcal V$ such that $d(h,h')<\epsilon$.
\end{rema}

Let $\overline V$ and $\overline V{}'$ be two normed vector bundles
over $\overline{\mathbb Q}$ and $f:V\rightarrow V'$ be an
isomorphism of $\overline{\mathbb Q}$-vector spaces. We say that $f$
is an \emph{isomorphism} of normed vector bundles if there exists a
number field $K$ and a model $(\mathcal V,h)$ (resp. $(\mathcal
V',h')$) of $\overline V$ (resp. $\overline V{}'$) whose field of
definition is $K$ and such that
\begin{enumerate}[(1)]
\item $f$ descends to an isomorphism $\widetilde f:\mathcal V\rightarrow\mathcal V'$ of $K$-vector spaces,
\item $\widetilde f$ induces for each place $v\in\Sigma_K$ an isometry between $(\mathcal V_{\mathbb C_v},{h_v})$ and $(\mathcal V'_{\mathbb C_v},{h_v'})$.
\end{enumerate}

The operations on norm families discussed in \S\ref{Subsec:norm
families} preserve admissible norm families and the above equivalence
relation. Hence they lead to the following constructions of normed
(hermitian) vector bundles.

\subsubsection{Subbundles and quotient bundles} Let $\overline V=(V,\xi)$ be a normed vector bundle over $\overline{\mathbb Q}$ and $W$ be a vector subspace of $V$. For any model $(\mathcal V, h)$ of $\overline V$, $\mathcal W=W\cap\mathcal V$ is a model of the $\overline{\mathbb Q}$-vector space $W$ (with the same field of definition as that of $\mathcal V$). The induced norm family on $\mathcal W$ defines a structure of normed vector bundle on $V$ which does not depend on the choice of the model $(\mathcal V,h)$. The corresponding normed vector bundle $\overline W$ is called a \emph{normed vector subbundle} of $\overline V$.

Similarly, the normed vector bundle structure $\xi$ induces by
quotient for each quotient space of $V$ a structure of normed vector
bundle on it. The normed vector bundle thus obtained is called a
\emph{normed quotient bundle} of $\overline V$. Note that, if
$\overline V$ is hermitian, then also are its normed subbundles and
quotient bundles.

Let $\overline V{}'$, $\overline V$ and $\overline V{}''$ be three
normed vector bundles over $\overline{\mathbb Q}$ and
\begin{equation}\label{Equ:suiteex}\xymatrix{\relax
0\ar[r]&V'\ar[r]^-f&V\ar[r]^-{g}&V''\ar[r]&0}\end{equation} be a
diagram of $\overline{\mathbb Q}$-linear maps. We say that
\[\xymatrix{\relax 0\ar[r]&\overline V{}'\ar[r]^-f&\overline V\ar[r]^-{g}&\overline V{}''\ar[r]&0}\]
is a \emph{short exact sequence} of normed vector spaces if the
diagram \eqref{Equ:suiteex} is a short exact sequence in the
category of $\overline{\mathbb Q}$ vector spaces and if $f$ (resp.
$g$) defines an isomorphism between $\overline V{}'$ (resp.
$\overline V{}''$) and a normed vector subbundle (resp. quotient
bundle) of $\overline V$.

\subsubsection{Dual bundles} Let $\overline V=(V,\xi)$ be a normed vector bundle over $\overline{\mathbb Q}$. Denote by $\overline V^\vee$ the pair $(V^\vee,\xi^\vee)$, where $\xi^\vee$ is the equivalence class of the dual of a model in $\xi$, called the \emph{dual} of $\overline V$. It is a normed vector bundle over $\overline{\mathbb Q}$. Moreover, it is hermitian if $\overline V$ is hermitian.

\subsubsection{Direct sum} Let $\overline V=(V,\xi)$ and $\overline W=(W,\eta)$ be two hermitian vector bundles over $\overline{\mathbb Q}$. Denote by $\overline V\oplus\overline W$ the pair $(V\oplus W,\xi\oplus\eta)$, where $\xi\oplus\eta$ is the equivalence class of the direct sum of two models (with the same field of definition) of $\overline V$ and $\overline W$ respectively. It is a hermtian vector bundle over $\overline{\mathbb Q}$. We call it the \emph{direct sum} of $\overline V$ and $\overline W$.

\subsubsection{Tensor products} Let $\overline V=(V,\xi)$ and $\overline W=(W,\eta)$ be two normed vector bundles over $\overline{\mathbb Q}$. Denote by $\overline V\otimes_{\varepsilon}\overline W$ the pair $(V\otimes W,\xi\otimes_{\varepsilon}\eta)$, where $\xi\otimes_{\varepsilon}\eta$ denotes the equivalence class of the $\varepsilon$-tensor product of two models (with the same field of definition) of $\overline V$ and $\overline W$ respectively. It is a normed vector bundle over $\overline{\mathbb Q}$ which does not depend on the choice of models. We call it the $\varepsilon$-\emph{tensor product} of $\overline V$ and $\overline W$.

If in addition $\overline V$ and $\overline W$ are hermitian, the
\emph{hermitian tensor product} $\overline V\otimes\overline W$ is
defined in a similar way. It is a hermitian vector bundle over
$\overline{\mathbb Q}$.
\subsubsection{Determinant} Let $\overline V$ be a hermitian vector bundle over $\overline{\mathbb Q}$ and $(\mathcal V,h)$ be a model of it. Denote by $\det\overline V$ the hermitian vector bundle over $\overline{\mathbb Q}$ whose hermitian vector bundle structure is the equivalence class of $(\det \mathcal V,\det h)$, called the \emph{determinant} of $\overline V$. It is a hermitian line bundle over $\overline{\mathbb Q}$.

\subsection{Arakelov degree}

Let $\overline L$ be a hermitian line bundle over $\overline{\mathbb
Q}$ and \[\overline{\mathcal L}=(\mathcal
L,(\|.\|_{v})_{v\in\Sigma_K})\] be a model of $\overline L$,
where $K$ is its field of definition. We define the (normalized)
\emph{Arakelov degree} of $\overline L$ as
\begin{equation}\label{Equ:degree}\widehat{\deg}_{\mathrm{n}}(\overline L):=-\sum_{v\in\Sigma_K}\frac{[K_v:\mathbb Q_v]}{[K:\mathbb Q]}\log\|s\|_v,\end{equation}
where $s$ is a non-zero element in $\mathcal L$. The sum in
\eqref{Equ:degree} does not depend on the choice of $s$ (by the
product formula), and does not depend on the choice of the model
(being normalized, it is stable under scalar extensions).

More generally, the (normalized) \emph{Arakelov degree} of a
\emph{hermitian} vector bundle $\overline E$ over $\overline{\mathbb
Q}$ is defined as
\[\widehat{\deg}_{\mathrm{n}}(\overline E):=\widehat{\deg}_{\mathrm{n}}(\det\overline E).\]
If $\xymatrix{0\ar[r]&\overline F\ar[r]&\overline E\ar[r]&\overline
G\ar[r]&0}$ is a short exact sequence of hermitian vector bundles
over $\overline{\mathbb Q}$, then one has
\[\widehat{\deg}_{\mathrm{n}}(\overline E)=\widehat{\deg}_{\mathrm{n}}(\overline F)+\widehat{\deg}_{\mathrm{n}}(\overline G).\]
Moreover, if $\overline E$ is a hermitian vector bundle over
$\overline{\mathbb Q}$ and if $F_1$ and $F_2$ are two vector
subspaces of $E$, then the following relations hold~:
\begin{gather}
\rang(F_1\cap F_2)+\rang(F_1+F_2)=\rang(F_1)+\rang(F_2),\\
\label{Equ:suraddi}\ndeg(\overline{F_1\cap
F_2})+\ndeg(\overline{F_1+F_2})\geqslant\ndeg(\overline
F_1)+\ndeg(\overline F_2).
\end{gather}
In fact, by approximation (see Remark \ref{Rem:approx}) we can reduce the problem to the case where all hermitian vector bundles admit pure models. Then it suffices to apply \cite{Gaudron07}, Propositions 4.22
and 4.23 to suitable models.

\subsection{Slopes}\label{Subsec:slopes}
If $\overline E$ is a non-zero hermitian vector bundle over
$\overline{\mathbb Q}$, the \emph{slope} of $\overline E$ is defined
as the quotient
\[\hat{\mu}(\overline E):=\frac{\widehat{\deg}_{\mathrm{n}}(\overline E)}{\rang(E)}.\]
We say that $\overline E$ is \emph{semistable} (resp. \emph{stable})
if for any non-zero vector subspace $F$ which is strictly contained
in $E$ one has $\hat{\mu}(\overline F)\leqslant
\hat{\mu}(\overline E)$ (resp. $\hat{\mu}(\overline
F)<\hat{\mu}(\overline E)$).

\begin{prop}\label{Pro:maximal}
Let $\overline E$ be a non-zero hermitian vector bundle over
$\overline{\mathbb Q}$. The set \[\{\hat{\mu}(\overline
F)\,:\,\text{$F$ is a non-zero vector subspace of $E$}\}\] attains
its maximum, and there exists a non-zero subbudle $\overline
E_{\des}$ of $\overline E$ whose slope is maximal and which contains
all subbundles of $\overline E$ with the maximal slope.
\end{prop}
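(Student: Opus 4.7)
The plan is to reduce to the classical Stuhler--Grayson theory over a number field, then derive the maximality from the super-additivity of the Arakelov degree.

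First, I would fix a model $(\mathcal E, h)$ of $\overline E$ defined over some number field $K_0$. Any non-zero vector subspace $F$ of $E$ is defined over some finite extension $K/K_0$, meaning that there exists a $K$-subspace $F_K \subset \mathcal E \otimes_{K_0} K$ with $F = F_K \otimes_K \overline{\mathbb Q}$. By the invariance of the normalized slope under base change (the $\overline{\mathbb Q}$-analogue of \eqref{ndeginv} and Proposition \ref{arpullback}, built into the very definition of $\hat\mu$ for hermitian vector bundles over $\overline{\mathbb Q}$), the slope $\hat{\mu}(\overline F)$ coincides with the classical slope of $(F_K, h|_{F_K})$ as a hermitian vector bundle over $K$. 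Applied to the ambient bundle, this gives the uniform upper bound
\[
\hat{\mu}(\overline F) \leqslant \hat{\mu}_{\max}\bigl((\overline{\mathcal E})_K\bigr) = \hat{\mu}_{\max}(\overline{\mathcal E}),
\]
so the set of slopes of non-zero subbundles of $\overline E$ is bounded above by $\mu^{*} := \hat{\mu}_{\max}(\overline{\mathcal E})$.

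Next, attainment of the supremum is free from classical Stuhler--Grayson theory applied to $\overline{\mathcal E}$ over $K_0$: the first step $\mathcal E_1$ of the Harder--Narasimhan filtration of $\overline{\mathcal E}$ has slope exactly $\mu^{*}$, and its extension of scalars to $\overline{\mathbb Q}$ yields a non-zero subspace of $E$ whose associated hermitian vector bundle realizes $\mu^{*}$.

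Finally, for the existence of a largest subbundle of maximal slope, I would argue by super-additivity. Suppose $F_1$ and $F_2$ are two non-zero subspaces of $E$ with $\hat{\mu}(\overline{F_i}) = \mu^{*}$. The inequality \eqref{Equ:suraddi} gives
\[
\ndeg(\overline{F_1 + F_2}) \geqslant \ndeg(\overline{F_1}) + \ndeg(\overline{F_2}) - \ndeg(\overline{F_1 \cap F_2}),
\]
and combined with $\rk(F_1 \cap F_2) + \rk(F_1+F_2) = \rk(F_1) + \rk(F_2)$ together with the a priori bound $\hat\mu(\overline{F_1 \cap F_2}) \leqslant \mu^{*}$ (taking the convention that the degree of the zero bundle is zero, which handles the case $F_1 \cap F_2 = 0$), a direct computation yields $\hat{\mu}(\overline{F_1+F_2}) \geqslant \mu^{*}$, hence equality. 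Thus the collection of subspaces of $E$ of slope $\mu^{*}$ is stable under sums, and by finite-dimensionality of $E$ it admits a unique maximal element $\overline E_{\des}$, which contains every subbundle of maximal slope.

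The only conceptual subtlety is the descent step: subspaces of $E$ are not all defined over a single number field, so the base-change invariance of the normalized slope is essential to reduce the problem to the classical theory. Once this is granted, the remaining arguments are routine applications of \eqref{Equ:suraddi} and of the existence of the Harder--Narasimhan filtration over number fields.
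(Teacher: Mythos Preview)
Your argument is correct in structure but takes a genuinely different route from the paper's, and it leans on a couple of external facts you should make explicit.

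The paper proves the proposition by a self-contained induction on $\rk E$, using only the super-additivity inequality \eqref{Equ:suraddi}: if $\overline E$ is already semistable take $E_{\des}=E$; otherwise pick $E'\subsetneq E$ of \emph{maximal rank} among subspaces with $\hat\mu(\overline{E'})>\hat\mu(\overline E)$, apply the induction hypothesis to $\overline{E'}$ to get $E'_{\des}$, and then verify via \eqref{Equ:suraddi} that $E'_{\des}$ already dominates every subspace $F\subset E$ (the maximality of $\rk E'$ forces $\hat\mu(\overline{F+E'})\leqslant\hat\mu(\overline E)$ whenever $F\not\subset E'$). No descent to a number field, and no prior boundedness of slopes, is needed.

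Your route---descend to a model, invoke Harder--Narasimhan over the number field, then use \eqref{Equ:suraddi} to show closure under sums---is valid, but be careful on two points. First, the model $(\mathcal E,h)$ is an admissible \emph{adelic} vector bundle in Gaudron's sense, not in general a pure one arising from a hermitian vector bundle over $\mathcal O_{K_0}$ (see Remark~\ref{Rem:approx}); so ``classical Stuhler--Grayson'' does not literally apply, and you are really invoking the extension of Harder--Narasimhan theory to this adelic setting. Second, the equality $\hat\mu_{\max}((\overline{\mathcal E})_K)=\hat\mu_{\max}(\overline{\mathcal E})$ is the base-change invariance of the HN filtration; Proposition~\ref{arpullback} covers the classical case, and the same Galois-descent argument works for adelic bundles, but you should say so rather than cite the classical statement. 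Your final super-additivity step (max-slope subspaces are closed under sums, hence admit a largest element) is clean and is essentially what the paper uses inside its induction. The tradeoff: your approach is conceptually transparent but imports results from outside the paper's development; the paper's induction is slightly longer but uses nothing beyond \eqref{Equ:suraddi}.
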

\begin{proof}
We prove the proposition by induction on the rank $r$ of $E$. The
case where $r=1$ is trivial. In the following, we assume $r>1$.

If for any $F\subset E$, one has
$\hat{\mu}(F)\leqslant\hat{\mu}(E)$, then $E_{\des}=E$
already verifies the condition. Otherwise we can choose
$E'\subsetneq E$ such that $\hat{\mu}(\overline
E{}')>\hat{\mu}(\overline E)$ and that $\rang(E')$ is as large
as possible. The induction hypothesis applied on $\overline E{}'$
shows that there exists $E{}_{\des}'\subset E'$ which verifies the
properties predicted by the conclusion of the Proposition. One has
$\hat{\mu}(\overline
E{}_{\des}')\geqslant\hat{\mu}(\overline
E{}')>\hat{\mu}(\overline E)$. We shall verify that actually
$E_{\des}:=E_{\des}'$ has the required properties relatively to
$\overline E$. Let $F$ be a non-zero subspace of $E$. If $F$ is
contained in $E'$, then $\hat{\mu}(\overline
F)\leqslant\hat{\mu}(\overline E{}_{\des}')$. Otherwise one has
$\rang(F+E')>\rang(E')$ and hence
$\hat{\mu}(\overline{F+E'})\leqslant\hat{\mu}(\overline E)$.
By \eqref{Equ:suraddi} one obtains
\[\widehat{\deg}_{\mathrm{n}}(\overline{F\cap E'})+\widehat{\deg}_{\mathrm{n}}
(\overline{F+E'})\geqslant\widehat{\deg}(\overline
F)+\widehat{\deg}(\overline E{}').\] Moreover, one has
\[\widehat{\deg}_{\mathrm{n}}(\overline{F+E'})\leqslant
\rang(F+E')\hat{\mu}(\overline
E)<\rang(F+E')\hat{\mu}(\overline E{}'),\] and
\[\widehat{\deg}_{\mathrm{n}}(\overline{F\cap E'})\leqslant\rang(F\cap E')\hat{\mu}(\overline E{}_{\des}')\]
since $F\cap E'\subset E'$. Thus
\[\widehat{\deg}_{\mathrm{n}}(\overline F)<\big(\rang(E'+F)-\rang(E')\big)\hat{\mu}(\overline E{}')+\rang(F\cap E')\hat{\mu}(\overline E{}_{\des}')\leqslant
\rang(F)\hat{\mu}(\overline E{}_{\des}').\] So the inequality
$\hat{\mu}(\overline F)\leqslant\hat{\mu}(\overline
E{}'_{\des})$ always holds, and the inequality is strict when
$F\not\subset E'$. By the induction hypothesis, if $F\subset E$ is
such that $\hat{\mu}(\overline F)=\hat{\mu}(\overline
E{}_{\des}')$, then one has $F\subset E_{\des}'$. The propositon is
thus proved.
\end{proof}

\begin{defi}
Let $\overline E$ be a non-zero hermitian vector bundle over
$\overline{\mathbb Q}$. Define \[\hat{\mu}_{\max}(\overline
E):=\sup\{\hat{\mu}(\overline F)\,:\,\text{$F$ is a non-zero
vector subspace of $E$}\},\] called the \emph{maximal slope} of
$\overline E$. By Proposition \ref{Pro:maximal}, we obtain that
$\hat{\mu}_{\max}(\overline E)$ is finite, and is attained by a vector
subspace of $E$.
\end{defi}

The following is an upper bound of the maximal slope of a quotient
hermitian vector bundle.
\begin{prop}\label{Pro:mumaxquot}
Let $\overline E$ be a hermitian vector bundle over
$\overline{\mathbb Q}$ and $M$ be a non-zero vector subspace of $E$
such that $E/M\neq 0$. Let $r$ be the rank of $M$. One has
\begin{equation}\label{Equ:mumaxquot}\hat{\mu}_{\max}(\overline E/\overline M)
\leqslant (r+1)\hat{\mu}_{\max}(\overline
E)-r\hat{\mu}(\overline M).\end{equation}
\end{prop}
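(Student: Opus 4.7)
My approach would be elementary: reduce the bound on $\hat{\mu}_{\max}(\overline{E}/\overline{M})$ to the defining inequality for $\hat{\mu}_{\max}(\overline{E})$, by passing through a preimage subspace in $E$, and then optimize in the rank of the chosen quotient.

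First, by Proposition \ref{Pro:maximal} applied to $\overline{E}/\overline{M}$ (or simply by taking supremum), it suffices to bound $\hat{\mu}(\overline{N})$ for an arbitrary non-zero subbundle $\overline{N}$ of $\overline{E}/\overline{M}$. Write $N = F/M$ for the unique subspace $F$ with $M \subset F \subset E$, and let $\overline{F}$ denote $F$ equipped with the hermitian structure induced from $\overline{E}$. A short verification (at each place: via orthogonal projection at infinite places, via the compatibility of sub- and quotient ultranorms at finite places) shows that the quotient structure on $F/M$ coming from $\overline{F}/\overline{M}$ coincides with the structure on $N$ as a subbundle of $\overline{E}/\overline{M}$. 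Hence we have a short exact sequence of hermitian vector bundles
\[ 0 \longrightarrow \overline{M} \longrightarrow \overline{F} \longrightarrow \overline{N} \longrightarrow 0. \]

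Set $s := \rang(N) \geqslant 1$, so $\rang(F) = r+s$. By additivity of the Arakelov degree in short exact sequences,
\[ \widehat{\deg}_{\mathrm{n}}(\overline{N}) \;=\; \widehat{\deg}_{\mathrm{n}}(\overline{F}) - \widehat{\deg}_{\mathrm{n}}(\overline{M}) \;\leqslant\; (r+s)\,\hat{\mu}_{\max}(\overline{E}) - r\,\hat{\mu}(\overline{M}), \]
where the inequality uses $\widehat{\deg}_{\mathrm{n}}(\overline{F}) \leqslant \rang(F)\,\hat{\mu}_{\max}(\overline{E})$. Dividing by $s$,
\[ \hat{\mu}(\overline{N}) \;\leqslant\; \hat{\mu}_{\max}(\overline{E}) + \frac{r}{s}\bigl(\hat{\mu}_{\max}(\overline{E}) - \hat{\mu}(\overline{M})\bigr). \]

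Finally, since $\overline{M}$ is a non-zero subbundle of $\overline{E}$, we have $\hat{\mu}(\overline{M}) \leqslant \hat{\mu}_{\max}(\overline{E})$, so the coefficient of $1/s$ is non-negative. The right-hand side is therefore a non-increasing function of $s \geqslant 1$, maximal at $s=1$, which gives exactly the claimed bound $(r+1)\hat{\mu}_{\max}(\overline{E}) - r\hat{\mu}(\overline{M})$. Taking the supremum over all non-zero subbundles $\overline{N}$ of $\overline{E}/\overline{M}$ concludes the proof. No step presents a real obstacle; the only technical point to double-check is the identification of the two natural hermitian structures on $F/M$ mentioned above, which is routine in the Hermitian/ultrametric setting.
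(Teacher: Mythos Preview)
Your proof is correct and follows essentially the same approach as the paper's: both take a subspace $F$ of $E$ containing $M$, use the additivity of degree in the short exact sequence $0\to\overline M\to\overline F\to\overline{F/M}\to 0$ together with $\ndeg(\overline F)\leqslant\rang(F)\hat{\mu}_{\max}(\overline E)$, and then observe that the resulting bound is worst when $\rang(F/M)=1$. The only cosmetic difference is that the paper works directly with the destabilizing subspace $F/M=(E/M)_{\des}$ rather than an arbitrary $N$, which avoids the final supremum step; the compatibility of the two hermitian structures on $F/M$ that you flag is indeed routine and is used implicitly in the paper as well.
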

\begin{proof}
Let $F$ be a vector subspace of $E$ containing $M$ and such that
$F/M=(E/M)_{\des}$. One has
\[\ndeg(\overline F/\overline M)=(\rang(F)-r)\hat{\mu}(\overline F/\overline M)=(\rang(F)-r)\hat{\mu}_{\max}(\overline E/\overline M).\]
Moreover,
\[\ndeg(\overline F/\overline M)=\ndeg(\overline F)-\ndeg(\overline M)\leqslant\rang(F)\hat{\mu}_{\max}(\overline E)-\ndeg(\overline M).\]
Therefore
\[\hat{\mu}_{\max}(\overline E/\overline M)\leqslant
\frac{\rang(F)}{\rang(F)-r}\hat{\mu}_{\max}(\overline E)-
\frac{\ndeg(\overline
M)}{\rang(F)-r}=\frac{\rang(F)}{\rang(F)-r}\big(\hat{\mu}_{\max}(\overline
E)-\hat{\mu}(\overline M)\big)+\hat{\mu}(\overline M).\]
This implies \eqref{Equ:mumaxquot} since $\rang(F)/(\rang(F)-r)$ is
bounded from above by $r+1$.
\end{proof}

Let $\overline E$ be a non-zero hermitian vector bundle over
$\overline{\mathbb Q}$. The proposition \ref{Pro:maximal} implies
the existence of an increasing flag
\begin{equation}\label{Equ:HN}0=E_0\subsetneq E_1\subsetneq\ldots\subsetneq E_n=E.\end{equation}
such that each subquotient $\overline E_i/\overline E_{i-1}$ is
semistable and that the successive slopes form a strictly decreasing
sequence:
\begin{equation}\label{Equ:succslope}\hat{\mu}(\overline E_1/\overline E_0)>\ldots>\hat{\mu}(\overline E_n/E_{n-1}).\end{equation}
In fact, it suffice to construct $E_i$ in a recursive way such that
$E_i/E_{i-1}=(E/E_{i-1})_{\des}$. Similarly to the classical theory
of Harder-Narasimhan filtrations for vector bundles on a regular
projective curve, the flag \eqref{Equ:HN} (called the
\emph{Harder-Narasimhan flag} of $\overline E$) is characterized by
the above two properties (semistable subquotients and strictly
decreasing successive slopes). The last slope in
\eqref{Equ:succslope} equals the minimal value of slopes of non-zero
quotient hermitian vector bundles of $\overline E$. It is denoted by
$\hat{\mu}_{\min}(\overline E)$.

\subsection{Successive degrees}\label{SubSec:successive deg}

Let $\overline E$ be a non-zero \emph{normed} vector bundle over
$\overline{\mathbb Q}$. Inspired by \cite{Bost_Kunnemann}, we
defined the \emph{first degree} of $\overline E$ as
\[\degi{1}(\overline E):=\sup\{\widehat{\deg}_{\mathrm{n}}(\overline L)\,|\,\text{$L$ is a vector subspace of rank $1$ of $E$}
\}.\] This definition could be compared to
\cite[(3.20)]{Bost_Kunnemann}: if $(\mathcal E,h)$ is a model of
$\overline E$, then one has
\[\degi{1}(\overline E)=\sup_{K'/K}\udeg(\mathcal E_{K'},h_{K'}),\]
where $K'/K$ runs over all finite extensions of $K$. In particular, if $\overline E$ comes from a hermitian vector bundle on the spectrum of an algebraic integer ring, then $\degi{1}(\overline E)$ is noting but the stable upper arithmetic degree of the hermitian vector bundle introduced in \S\ref{Subsec:sudegree}.

%If $\overline E$ is a non-zero normed vector bundle over $K$. Then by definition one has $\udeg(\overline E)\le\hat{\mu}_{\max}(\overline E)$. By the invariance property of $\hat{\mu}_{\max}(\cdot)$ by scalar extension, we obtain that the stable upper degree of $\overline E$ is also bounded from above by $\hat{\mu}_{\max}(\overline E)$.

Note that the first degree can be interpreted as an absolute minimum.
In fact, $\degi{1}(\overline E)$ is just the opposite of the infimum
of heights (with respect to the universal line bundle equipped with
Fubini-Study metrics) of algebraic points in $\mathbb P(E^\vee)$.
Inspired by this observation, we propose the following notion of
\emph{successive degrees} as follows. Let $\overline E$ be a
non-zero normed vector bundle of rank $r$ over $\overline{\mathbb
Q}$. For any integer $i\in\{1,\ldots,r\}$, let
\begin{equation}\label{Equ:succ deg}
\degi{i}(\overline E):=\inf_{Z}\big(\sup\{\ndeg(\overline
L)\,|\,L\in(\mathbb P(E^\vee)\setminus Z)(\overline{\mathbb
Q})\}\big),\end{equation} where $Z$ runs over all closed subscheme
of codimension $\geqslant r-i+1$ in $\mathbb P(E^\vee)$ (subscheme
of codimension $r$ of $\mathbb P(E^\vee)$ refers to the empty
scheme). Note that $\degi{i}(\overline E)$ is the opposite of the
$(r-i+1)^{\mathrm{th}}$ logarithmic minimum of $\mathbb P(E^\vee)$.
We have the following relations
\begin{equation}
\degi{1}(\overline E)\geqslant\ldots\geqslant \degi{r}(\overline E).
\end{equation}

%{\color{blue}\noindent {\bf Question:} comparaison de $\mathrm{s}\widehat{\deg}{}_{\mathrm{n}}^{(i)}(\overline E)$ et $\hat{\mu}_i(\overline E)$.}

We recall below a result of Zhang (in a particular case) which
compares the sum of successive minima and the height of an
arithmetic variety (cf. \cite{Zhang95} Theorem 5.2).

\begin{theo}[Zhang]\label{Thm:Zhang}
Let $\overline E$ be a non-zero hermitian vector bundle of rank $r$
over $\overline{\mathbb Q}$. Then the following inequalities hold
\begin{equation}\label{Equ:encadrement}
0\leqslant\ndeg(\overline E)-\frac 12\sum_{i=1}^r\degi{i}(\overline
E)\leqslant \frac 12r\ell(r),
\end{equation}
where
\begin{equation}\label{Equ:alpha}\ell(r)=\sum_{2\leqslant j\leqslant r}\frac 1j.\end{equation}
\end{theo}

\begin{rema}
One has
\[\ell(r)<\log(r)\]
for any $r\geqslant 2$, and
\[\log(r)-\ell(r)=(1-\gamma)+O(r^{-1})\qquad (r\rightarrow\infty),\]
where $\gamma$ is the constant of Euler. The following are some
values of the functions $\ell(r)$ and
$\lceil\exp(r\ell(r))\rceil$.

\begin{center}
\begin{tabular}{|c|c|c|c|}
\hline $r$&$2$&$3$&$4$\\
\hline
$\ell(r)$&1/2&$5/6$&$13/12$\\
\hline $\lceil\exp(r\ell(r))\rceil$&$3$&$13$&$77$\\\hline
\end{tabular}\end{center}
\end{rema}

As an application, one obtains the following absolute version of
``Siegel's lemma'' (cf. \cite{Gaudron07} 4.13 and 4.14)
\begin{coro}\label{Cor:siegel}
For any  hermitian vector bundle $\overline E$ of rank $r>0$ over
$\overline{\mathbb Q}$ and any positive real number $\epsilon$,
there exist hermitian line subbundles $\overline
L_1,\ldots,\overline L_r$ of $\overline E$ such that
$E=L_1+\cdots+L_r$ and that
\begin{equation}
0\leqslant\widehat{\deg}_{\mathrm{n}}(\overline
E)-\sum_{i=1}^r\widehat{\deg}_{\mathrm{n}}(\overline L_i)\leqslant
\frac12 r\ell(r)+\epsilon.
\end{equation}
\end{coro}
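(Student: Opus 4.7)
My plan is to construct $\overline L_1,\ldots,\overline L_r$ inductively, exploiting the geometric meaning of the successive degrees $\degi{i}(\overline E)$, and then to conclude via Zhang's inequality (Theorem \ref{Thm:Zhang}). First I would fix $\delta>0$ and, for each $i\in\{1,\ldots,r\}$, consider the set
\[S_i:=\{L\in\mathbb P(E^\vee)(\overline{\mathbb Q})\,:\,\ndeg(\overline L)>\degi{i}(\overline E)-\delta\}\]
of lines in $E$ whose Arakelov degree is close to $\degi{i}(\overline E)$. The opening observation is that the Zariski closure $\overline{S_i}$ of $S_i$ in $\mathbb P(E^\vee)$ must have dimension at least $i-1$: if $\overline{S_i}$ had codimension $\geqslant r-i+1$, then choosing $Z=\overline{S_i}$ in the defining formula \eqref{Equ:succ deg} would force $\sup_{L\notin Z}\ndeg(\overline L)\leqslant\degi{i}(\overline E)-\delta$, contradicting the fact that $\degi{i}(\overline E)$ is the infimum of such suprema.

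Next I would select $L_1,\ldots,L_r$ inductively with $L_i\in S_i$ and $V_i:=L_1+\cdots+L_i$ of dimension exactly $i$ in $E$. Given $V_{i-1}$ of dimension $i-1$, the lines of $E$ contained in $V_{i-1}$ form a closed subvariety of $\mathbb P(E^\vee)$ of dimension $i-2$, which is strictly less than $\dim\overline{S_i}\geqslant i-1$. Hence $S_i$ is not contained in this subvariety, and I may pick $L_i\in S_i$ with $L_i\not\subset V_{i-1}$, so that $V_i:=V_{i-1}+L_i$ has dimension $i$. After $r$ iterations, $V_r=E$, so $L_1+\cdots+L_r=E$, and
\[\sum_{i=1}^r\ndeg(\overline L_i)>\sum_{i=1}^r\degi{i}(\overline E)-r\delta.\]

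To conclude, I would fix nonzero $v_i\in L_i$: at every place $v$ of the common field of definition, the inequality $\|v_1\wedge\cdots\wedge v_r\|_v\leqslant\prod_{i=1}^r\|v_i\|_v$ -- Hadamard's inequality at archimedean places, the ultrametric inequality at finite places -- yields, after summation, the lower bound $\ndeg(\overline E)-\sum_i\ndeg(\overline L_i)\geqslant 0$ asserted in the corollary. The matching upper bound then follows by combining the previous estimate with Zhang's inequality, which compares $\ndeg(\overline E)$ and $\sum_i\degi{i}(\overline E)$ up to an error of order $\tfrac12 r\ell(r)$, and by choosing $\delta:=\epsilon/r$. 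The hard part will be the first step: translating the abstract inf-sup definition of $\degi{i}(\overline E)$ into the geometric dimension bound $\dim\overline{S_i}\geqslant i-1$; once this is in place, the inductive construction and the application of Zhang's theorem are routine.
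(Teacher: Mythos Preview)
Your proposal is correct and is precisely the argument the paper has in mind. The paper does not spell out a proof of Corollary~\ref{Cor:siegel}; it simply presents it as ``an application'' of Theorem~\ref{Thm:Zhang} with a reference to Gaudron, and elsewhere (Remark~\ref{Rem:ssl}, Proposition~\ref{Pro:majo de mu}) uses the same recursive selection of lines with $\ndeg(\overline L_i)$ close to $\degi{i}(\overline E)$. Your dimension argument---that the Zariski closure of $S_i$ must have dimension $\geqslant i-1$, else it could serve as the $Z$ in \eqref{Equ:succ deg} and contradict the definition of $\degi{i}$---is exactly the mechanism that makes the recursive choice possible, and the Hadamard/ultrametric bound for the lower inequality is the standard justification. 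Note that the factor $\tfrac12$ in front of $\sum_i\degi{i}(\overline E)$ in the displayed statement of Theorem~\ref{Thm:Zhang} is a typo in the paper (as all of its subsequent applications confirm); you have correctly used the intended inequality $\ndeg(\overline E)\leqslant\sum_i\degi{i}(\overline E)+\tfrac12 r\ell(r)$.
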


\begin{rema}%\marginpar{Lagrange, Gauss}
In the special case where  $r=2$ and $\overline E$ admits a pure model
over $\mathbb Z$, the work of de Shalit et Parzanchevski implies
(see \cite{deShalit_Parzan} \S1.3) that there exist hermitian line
subbundles $\overline L_1$ and $\overline L_2$ of $\overline E$ such
that
\[\hat{\mu}({\overline E})-\frac 12\big(\widehat{\deg}_{\mathrm{n}}(\overline L_1)+\widehat{\deg}_{\mathrm{n}}(\overline L_2)\big)\leqslant\frac 12\log\frac{2}{\sqrt{3}}.\]
{Their method relies on the classical reduction theory of Euclidean
lattices, which goes back to Lagrange and Gauss.}
\end{rema}

By definition the first degree of a hermitian vector bundle is
bounded from above by the maximal slope. The theorem of Zhang leads
to a reverse inequality with a supplementary term which is
$\frac 12\ell(r)$.

\begin{prop}\label{Pro:sudeg encad}
For any hermitian vector bundle $\overline E$ of positive rank $r$
over $\overline{\mathbb Q}$, one has
\begin{equation}
\degi{1}(\overline E)\leqslant\hat{\mu}_{\max}(\overline
E)\leqslant\degi{1}(\overline E)+\frac 12\ell(r).
\end{equation}
\end{prop}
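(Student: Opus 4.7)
The first inequality, $\degi{1}(\overline E)\leqslant\hat{\mu}_{\max}(\overline E)$, is immediate from the definitions: every rank-$1$ subspace $L$ of $E$ is a non-zero subspace of $E$, so $\ndeg(\overline L) = \hat{\mu}(\overline L) \leqslant \hat{\mu}_{\max}(\overline E)$, and taking the supremum over $L$ yields the claim.

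For the non-trivial inequality $\hat{\mu}_{\max}(\overline E)\leqslant\degi{1}(\overline E)+\frac{1}{2}\ell(r)$, the plan is to combine Proposition \ref{Pro:maximal} with the absolute Siegel lemma (Corollary \ref{Cor:siegel}). By Proposition \ref{Pro:maximal}, there exists a non-zero subbundle $\overline F$ of $\overline E$ with $\hat{\mu}(\overline F) = \hat{\mu}_{\max}(\overline E)$. Let $s:=\rang(F)\leqslant r$. Fix $\epsilon>0$ and apply Corollary \ref{Cor:siegel} to $\overline F$: we obtain hermitian line subbundles $\overline L_1,\ldots,\overline L_s$ of $\overline F$ with $F=L_1+\cdots+L_s$ and
\[
\ndeg(\overline F) \leqslant \sum_{i=1}^s \ndeg(\overline L_i) + \frac{1}{2}s\,\ell(s) + \epsilon.
\]

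Since each $\overline L_i$ is a rank-$1$ subbundle of $\overline E$, its Arakelov degree is bounded above by $\degi{1}(\overline E)$. Substituting and dividing by $s$ gives
\[
\hat{\mu}_{\max}(\overline E) = \hat{\mu}(\overline F) \leqslant \degi{1}(\overline E) + \frac{1}{2}\ell(s) + \frac{\epsilon}{s}.
\]
Using the fact that $\ell$ is non-decreasing in its argument and $s\leqslant r$, we have $\ell(s)\leqslant\ell(r)$. Letting $\epsilon\to 0$ concludes the proof.

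The proof is essentially a one-line consequence of the absolute Siegel lemma, and there is no real obstacle: the only subtlety is the passage from the rank-$s$ estimate to the rank-$r$ one, which is immediate since $\ell$ is monotone. The entire conceptual content is already packaged in Theorem \ref{Thm:Zhang}; Proposition \ref{Pro:sudeg encad} is merely the translation of Zhang's inequality into the language of slopes and first degrees.
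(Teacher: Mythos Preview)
Your proof is correct and follows essentially the same route as the paper's: apply Zhang's inequality to the destabilizing subbundle and use monotonicity of $\ell$. The only cosmetic difference is that the paper invokes Theorem~\ref{Thm:Zhang} directly (bounding $\ndeg(\overline E_{\des})$ by $\sum_i \degi{i}(\overline E_{\des}) + \frac{1}{2}m\ell(m) \leqslant m\,\degi{1}(\overline E_{\des}) + \frac{1}{2}m\ell(m)$), which avoids the auxiliary $\epsilon$ you introduce via Corollary~\ref{Cor:siegel}; since the corollary is itself derived from the theorem, the two arguments are equivalent.
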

\begin{proof}
As explained above, the first degree $\degi{1}(\overline E)$ is
bounded from above by $\hat{\mu}_{\max}(\overline E)$. Consider
the destabilizing vector subbundle $\overline E_{\des}$ of
$\overline E$ and let $m$ denote its rank. By definition, one has
$\degi{1}(\overline E_{\des})\leqslant\degi{1}(\overline E)$.
Moreover, Theorem \ref{Thm:Zhang} applied to $\overline E_{\des}$
gives
\[\ndeg(\overline E_{\des})\leqslant \sum_{i=1}^m
\degi{i}(\overline E_{\des})+\frac 12\ell(m)\leqslant m\degi{1}(\overline
E_{\des})+\frac 12m\ell(m).\] Therefore,
\[\hat{\mu}_{\max}(\overline E)=\hat{\mu}(\overline E_{\des})\leqslant \degi{1}(\overline E_{\des})+\frac 12\ell(m)
\leqslant\degi{1}(\overline E)+\frac 12\ell(r).\]
\end{proof}

\subsection{Height of a linear map}

Let $\overline E$ be $\overline F$ be hermitian vector bundles over
$\overline{\mathbb Q}$, and $f:E\rightarrow F$ be a
$\overline{\mathbb Q}$-linear map. There exist models
$\overline{\mathcal E}_K$ and $\overline{\mathcal F}_K$ over the
same number filed $K$ such that $f$ descends to a $K$-linear map
$f_K:\mathcal E_K\rightarrow\mathcal F_K$. We define the
\emph{height} of $f$ as
\[h(f):=\sum_{v\in\Sigma_K}\frac{[K_v:\mathbb Q_v]}{[K:\mathbb Q]}\log\|f_{K,v}\|,\]
where $\|f_{K,v}\|$ denotes the operator norm of the $\mathbb
C_v$-linear map $\mathcal E_K\otimes_K\mathbb C_v\rightarrow\mathcal
F_K\otimes_K\mathbb C_v$ induced by $f_K$. Note that this definition
does not depend on the choice of the models $\overline{\mathcal
E}_K$ and $\overline{\mathcal F}_K$ (such that the linear map $f$
descends). Moreover, if $f$ is an isomorphism of vector spaces over
$\overline{\mathbb Q}$, one has
\[\ndeg(\overline E)=\ndeg(\overline F)+h(\Lambda^rf),\]
where $r$ is the rank of $E$. Since $h(\Lambda^rf)\leqslant rh(f)$,
one obtains
\[\hat{\mu}(\overline E)\leqslant\hat{\mu}(\overline F)+h(f)\]
provided that $f$ is a non-zero isomorphism of vector spaces over
$\overline{\mathbb Q}$. Therefore, we obtain the following
inequalities which should be considered a reformulation of classical slope inequalities
(see for example \cite[\S4.1.4]{Bost2001}).
\begin{prop}\label{Pro:slopeinequality}
Let $\overline E$ and $\overline F$ be non-zero hermitian vector
bundles over $\overline{\mathbb Q}$, and $f:E\rightarrow F$ be a
$\overline{\mathbb Q}$-linear map.
\begin{enumerate}[1)]
\item If $f$ is injective, then $\hat{\mu}_{\max}(\overline
E)\leqslant\hat{\mu}_{\max}(\overline F)+h(f)$.
\item If $f$ is surjective, then $\hat{\mu}_{\min}(\overline E)\leqslant
\hat{\mu}_{\min}(\overline F)+h(f)$.
\item If $f$ is non-zero, then $\hat{\mu}_{\min}(\overline E)\leqslant\hat{\mu}_{\max}(\overline F)+
h(f)$.
\end{enumerate}
\end{prop}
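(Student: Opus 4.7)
The base case from which all three parts will follow is the inequality already noted in the text: if $g:\overline{G}\to\overline{H}$ is an isomorphism of $\overline{\mathbb Q}$-vector spaces between non-zero hermitian vector bundles, then $\hat{\mu}(\overline G)\leqslant\hat{\mu}(\overline H)+h(g)$. The plan is to reduce each of the three cases to this one by taking an appropriate sub/quotient bundle, and to keep track of heights using two trivial but fundamental observations: (a) restricting $f$ to a vector subspace of $E$, or to a hermitian subbundle of $F$ containing $\mathrm{im}(f)$, cannot increase its local operator norm at any place, so these operations can only decrease $h(f)$; and (b) duality preserves local operator norms, so $h(f^\vee)=h(f)$.

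For 1), let $\overline E_{\des}\subseteq\overline E$ be the destabilizing subbundle of $\overline E$ provided by Proposition \ref{Pro:maximal}, so that $\hat{\mu}(\overline E_{\des})=\hat{\mu}_{\max}(\overline E)$. Since $f$ is injective, the restriction $f_1:=f|_{E_{\des}}:E_{\des}\to F$ is injective, and its image $F_1:=f(E_{\des})$, equipped with the structure of hermitian subbundle of $\overline F$, makes $f_1:\overline E_{\des}\to\overline F_1$ an isomorphism of hermitian vector bundles over $\overline{\mathbb Q}$ in the sense needed for the base inequality. By observation (a), $h(f_1)\leqslant h(f)$, so
\[\hat{\mu}_{\max}(\overline E)=\hat{\mu}(\overline E_{\des})\leqslant\hat{\mu}(\overline F_1)+h(f_1)\leqslant\hat{\mu}_{\max}(\overline F)+h(f).\]

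For 2), I would dualize. The map $f^\vee:F^\vee\to E^\vee$ is injective, and by observation (b), $h(f^\vee)=h(f)$. Applying 1) to $f^\vee$ yields $\hat{\mu}_{\max}(\overline F{}^\vee)\leqslant\hat{\mu}_{\max}(\overline E{}^\vee)+h(f)$. Using the standard identity $\hat{\mu}_{\max}(\overline G{}^\vee)=-\hat{\mu}_{\min}(\overline G)$ (which follows from $\ndeg(\overline G{}^\vee)=-\ndeg(\overline G)$ and the fact that the dual of a Harder--Narasimhan subbundle is a HN quotient of the dual, together with the compatibility of duality with subbundles and quotients of hermitian vector bundles over $\overline{\mathbb Q}$), this rearranges into $\hat{\mu}_{\min}(\overline E)\leqslant\hat{\mu}_{\min}(\overline F)+h(f)$.

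For 3), I would combine the two previous parts via the image. Let $F_1:=f(E)\subset F$ be endowed with the induced hermitian subbundle structure from $\overline F$, and consider the same map with restricted codomain, $f_1:E\twoheadrightarrow F_1$. By observation (a) applied to the codomain side, $h(f_1)=h(f)$ (the local operator norm is unchanged when we restrict the target to a subspace containing the image). Applying 2) to the surjection $f_1$ gives $\hat{\mu}_{\min}(\overline E)\leqslant\hat{\mu}_{\min}(\overline F_1)+h(f)$. Finally $\hat{\mu}_{\min}(\overline F_1)\leqslant\hat{\mu}_{\max}(\overline F_1)\leqslant\hat{\mu}_{\max}(\overline F)$, where the last inequality holds because any non-zero vector subspace of $F_1$ is also one of $F$. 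Putting these together yields the desired inequality.

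There is no real obstacle here; the only mildly delicate point, which I would state as a preliminary lemma once and reuse, is the identity $\hat{\mu}_{\max}(\overline G{}^\vee)=-\hat{\mu}_{\min}(\overline G)$ over $\overline{\mathbb Q}$. This requires checking that the duality construction of \S\ref{SubSec:hermitian} is compatible with the exact sequence description of subbundles and quotient bundles, so that the Harder--Narasimhan filtration of $\overline G{}^\vee$ is the dual (reversed) of that of $\overline G$; granted this, everything follows from additivity of the Arakelov degree in short exact sequences.
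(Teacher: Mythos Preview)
Your proof is correct and follows exactly the route the paper sets up: it establishes the isomorphism case $\hat{\mu}(\overline G)\leqslant\hat{\mu}(\overline H)+h(g)$ just before the proposition and then simply declares the three statements to be a reformulation of the classical slope inequalities (with a reference to \cite{Bost2001}), leaving the reader to fill in precisely the reductions you carry out. Your treatment of the duality step in 2), including the caveat about $\hat{\mu}_{\max}(\overline G{}^\vee)=-\hat{\mu}_{\min}(\overline G)$ in the $\overline{\mathbb Q}$ setting, is the only point that genuinely requires a word of justification, and you flag it appropriately.
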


\section{Line subbundles of an $\varepsilon$-tensor product}

In this section, we prove an upper bound (Proposition \ref{Pro:upp2}) for the Arakelov degree
of a line subbundle in the $\varepsilon$-tensor product of two hermitian
vector bundles. It
leads to non-trivial applications on the study of the maximal slope
of tensor products (see Theorem \ref{Thm:mumax}).

\begin{prop}\label{Pro:upp2}
Let $\overline E$ and $\overline F$ be two non-zero hermitian vector
bundles over $\overline{\mathbb Q}$. One has
\begin{equation}\label{EQu:upp2}
\degi{1}(\overline E\otimes_{\varepsilon}\overline F)\leqslant
\hat{\mu}_{\max}(\overline E)+\hat{\mu}_{\max}(\overline
F).\end{equation}
\end{prop}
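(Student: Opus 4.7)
The plan is to transpose to the arithmetic $\varepsilon$-tensor setting the algebraic argument used in the proof of Lemma~\ref{Lemm:udegmax}, exploiting the definition of the $\varepsilon$-tensor norm as the operator norm under the canonical identification $E\otimes F\simeq\mathrm{Hom}_{\overline{\mathbb Q}}(E^\vee, F)$.

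Let $\overline M$ be a line subbundle of $\overline E\otimes_{\varepsilon}\overline F$, and fix $\phi\in M\setminus\{0\}$, viewed as a linear map $\widetilde\phi:E^\vee\to F$ of some rank $r\geqslant 1$. Set $W:=\widetilde\phi(E^\vee)\subseteq F$ and $V:={}^t\widetilde\phi(F^\vee)\subseteq E$, both of rank $r$. Then $\widetilde\phi$ factors canonically as
\[
E^\vee\twoheadrightarrow V^\vee\xrightarrow{\;\overline\phi\;} W\hookrightarrow F,
\]
where the outer arrows are the restriction dual to $V\hookrightarrow E$ and the inclusion $W\hookrightarrow F$, and $\overline\phi$ is an isomorphism. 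Equip $V\subseteq E$ and $W\subseteq F$ with the restricted hermitian structures, obtaining hermitian subbundles $\overline V$ of $\overline E$ and $\overline W$ of $\overline F$. Analogously to the proof of Lemma~\ref{Lemm:udegmax}, the isomorphism $\overline\phi$ defines a canonical morphism of hermitian lines over $\overline{\mathbb Q}$,
\[
D:\overline M^{\otimes r}\longrightarrow\det\overline V\otimes\det\overline W,\qquad \phi^{\otimes r}\longmapsto\det\overline\phi.
\]

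The key technical step is to show that $h(D)\leqslant 0$; equivalently, that at every place $v$,
\[
\|\det\overline\phi\|_v\;\leqslant\;\|\phi\|_{\varepsilon,v}^{\,r},
\]
where $\det\overline\phi$ is viewed as an element of $\det V_v\otimes\det W_v$. At an archimedean place, hermitian orthogonality ensures that the operator norm of $\overline\phi$, computed with the dual of the restricted hermitian norm on $V$ and the restricted hermitian norm on $W$, equals the operator norm of $\widetilde\phi$, i.e.\ $\|\phi\|_{\varepsilon,v}$; the singular-value decomposition of $\overline\phi$ then gives $|\det\overline\phi|_v\leqslant\|\overline\phi\|_{\mathrm{op},v}^{\,r}$. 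At a finite place, the standard compatibility, for saturated sublattices, between the dual of the induced norm on $V$ and the quotient of the dual norm on $E^\vee$ yields the same identity of operator norms, and the ultrametric bound $|\det M|_v\leqslant\|M\|_{\mathrm{op},v}^{\,r}$ in an adapted lattice basis delivers the inequality. Keeping careful track of which norm on $V^\vee$ is being used at finite places is the main subtlety; once this norm compatibility is in place, the rest is linear algebra.

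Once $h(D)\leqslant 0$ is established, the standard computation for a non-zero morphism of hermitian line bundles (cf.\ Proposition~\ref{Pro:slopeinequality}) gives
\[
r\,\ndeg(\overline M)\;=\;\ndeg(\overline M^{\otimes r})\;\leqslant\;\ndeg(\det\overline V\otimes\det\overline W)\;=\;\ndeg(\overline V)+\ndeg(\overline W),
\]
hence $\ndeg(\overline M)\leqslant\hat{\mu}(\overline V)+\hat{\mu}(\overline W)$. Since $\overline V$ (resp.\ $\overline W$) is a hermitian subbundle of $\overline E$ (resp.\ $\overline F$), this yields $\ndeg(\overline M)\leqslant\hat{\mu}_{\max}(\overline E)+\hat{\mu}_{\max}(\overline F)$. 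Passing to the supremum over all line subbundles $\overline M$ of $\overline E\otimes_{\varepsilon}\overline F$ gives the required bound on $\degi{1}(\overline E\otimes_{\varepsilon}\overline F)$.
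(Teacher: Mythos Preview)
Your proof is correct, but it takes a different and more elaborate route than the paper's own argument. The paper's proof is a two-line application of the slope inequality (Proposition~\ref{Pro:slopeinequality}(3)): identifying a non-zero $f\in L$ with a linear map $E^\vee\to F$, the very definition of the $\varepsilon$-tensor norm gives $\ndeg(\overline L)=-h(f)$, and then $\hat\mu_{\min}(\overline E^\vee)\leqslant\hat\mu_{\max}(\overline F)+h(f)$ together with $\hat\mu_{\min}(\overline E^\vee)=-\hat\mu_{\max}(\overline E)$ yields the result immediately. No factorisation through the image, no determinant map, no place-by-place estimate is needed.

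What you do instead is transplant the determinant argument of Lemma~\ref{Lemm:udegmax} to the arithmetic setting. This is sound: your key local inequality $\|\det\overline\phi\|_v\leqslant\|\phi\|_{\varepsilon,v}^{\,r}$ follows at archimedean places from the singular-value decomposition (the product of singular values is bounded by the $r$-th power of the largest one), and at finite places from the ultrametric analogue; the equality of operator norms $\|\overline\phi\|_{\mathrm{op},v}=\|\widetilde\phi\|_{\mathrm{op},v}$ holds because the quotient map $E^\vee\to V^\vee$ and the inclusion $W\hookrightarrow F$ both have operator norm $1$ for the induced/quotient norms at every place. (Your caveat about ``saturated sublattices'' is unnecessary in the paper's framework over $\overline{\mathbb Q}$: one works with vector subspaces and restricted norm families, so the compatibility of dual-of-restricted and quotient-of-dual norms is automatic.) Your approach has the mild bonus of yielding the refined bound $\ndeg(\overline M)\leqslant\hat\mu(\overline V)+\hat\mu(\overline W)$ with the specific images $V,W$, which is the $\varepsilon$-tensor analogue of \eqref{Equ:sudeg1}; the paper's one-line proof trades this refinement for brevity.
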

\begin{proof}
Let $L$ be a one-dimensional subspace of $E\otimes F$ and $f\in
L\setminus\{0\}$, considered as a $\overline{\mathbb Q}$-linear map
from $E^\vee$ to $F$. Choose a number field $K$ such that $f$ give
rise to a $K$-linear map $\widetilde f:\mathcal
E^\vee_K\rightarrow\mathcal F_K$, where $\overline{\mathcal E}_K$
and $\overline{\mathcal F}_K$ are respectively  models of $\overline
E$ and $\overline F$. One has
\[\widehat{\deg}_{\mathrm{n}}(\overline L)=-
\sum_{v\in\Sigma_{K}}\frac{[K_v:\mathbb Q_v]}{[K:\mathbb
Q]}\log\|\widetilde f_v\|_v=-h(f)\] Since $f$ is non-zero, by the
slope inequality (Proposition \ref{Pro:slopeinequality}), one has
\[
\hat{\mu}_{\min}(\overline E^\vee)\leqslant
\hat{\mu}_{\max}(\overline{F})+h(f)=\hat{\mu}_{\max}(\overline{F})-\widehat{\deg}_{\mathrm{n}}(\overline
L).
\]
Hence $\widehat{\deg}_{\mathrm{n}}(\overline
L)\leqslant\hat{\mu}_{\max}(\overline
F)-\hat{\mu}_{\min}(\overline
E^\vee)=\hat{\mu}_{\max}(\overline
E)+\hat{\mu}_{\max}(\overline F)$.
\end{proof}

\begin{defi}
Let $\overline E$ be a non-zero normed vector bundle of rank $r$
over $\overline{\mathbb Q}$. We define
\[\varsigma(\overline E):=\inf_{F\subsetneq E}\degi{1}(\overline E/\overline F),\]
where $F$ runs over all vector subspaces of $E$ such that
$F\subsetneq E$.
\end{defi}

Assume that $\overline E$ is hermitian. By definition, one always
has $\varsigma(\overline E)\leqslant\hat{\mu}_{\min}(\overline
E)$. Moreover Proposition \ref{Pro:sudeg encad} implies that
\[\varsigma(\overline E)\geqslant\hat{\mu}_{\min}(\overline E)-\frac 12\ell(r).\]
By passing to dual, one obtains
\[\hat{\mu}_{\max}(\overline E)\leqslant -\varsigma(\overline E^\vee)\leqslant\hat{\mu}_{\max}(\overline E)+\frac12\ell(r).\]

The following is a variant of the inequality \eqref{EQu:upp2} where
in the upper bound there appears the first degree of $\overline E$
rather than the maximal slope, and we need not assume that the
normed vector bundle $\overline E$ is hermitian. As a price paid,
the term $\hat{\mu}_{\max}(\overline F)$ figuring in the upper
bound is replaced by a larger term $-\varsigma(\overline F^\vee)$.
This permits to obtain an upper bound for the $\varepsilon$-tensor
product of \emph{several} hermitian vector bundles in a recursive
way (Corollary \ref{Cor:tensor}) and hence to establish a stronger
upper bound (see Theorem \ref{Thm:mumax}) for the maximal slope of
tensor product of hermitian vector bundles.
\begin{prop}\label{Pro:majr}
Let $\overline E$ and $\overline F$ be non-zero normed vector
bundles over $\overline{\mathbb Q}$. We have
\begin{equation}
\degi{1}(\overline E\otimes_{\varepsilon}\overline
F)\leqslant\degi{1}(\overline E)-\varsigma(\overline F^\vee).
\end{equation}
\end{prop}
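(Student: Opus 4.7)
The plan is to imitate the proof of Proposition~\ref{Pro:upp2}, but to route through a quotient of $\overline F^\vee$ rather than $\overline F^\vee$ itself, so that the relevant supremum over lines produces $\degi{1}$ in the bound instead of $\hat\mu_{\max}$. Fix a line $L\subseteq E\otimes F$ and a non-zero $f\in L$, which we regard as a $\overline{\mathbb Q}$-linear map $f:F^\vee\rightarrow E$. Since the $\varepsilon$-tensor product norm is exactly the operator norm at every place, $\ndeg(\overline L)=-h(f)$, where $h(f)$ denotes the height of $f$ as a linear map.

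Next, factor $f$ as $\tilde f\circ\pi$, where $\pi:F^\vee\twoheadrightarrow G:=F^\vee/\ker f$ is the canonical surjection and $\tilde f:G\hookrightarrow E$ is the induced injection, and equip $G$ with the quotient normed vector bundle structure inherited from $\overline F^\vee$ (the construction $p_*$ of \S\ref{Subsec:norm families}). From the formula $\|[x]\|_{G,v}=\inf_{y\in\ker f}\|x+y\|_{F^\vee,v}$ for the quotient norm, one reads off $\|\tilde f\|_v\leqslant\|f\|_v$ at every place $v$. Consequently, for any line $M\subseteq G$ generated by a vector $m$, the image $\tilde f(M)\subseteq E$ is a line generated by $\tilde f(m)\neq 0$, and the local estimates $\log\|\tilde f(m)\|_{E,v}\leqslant\log\|f\|_v+\log\|m\|_{G,v}$, weighted by $[K_v:\mathbb Q_v]/[K:\mathbb Q]$ and summed over places of a common field of definition $K$, yield
\[\ndeg(\overline{\tilde f(M)})\geqslant\ndeg(\overline L)+\ndeg(\overline M).\]

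Since $\tilde f(M)$ is a line subbundle of $\overline E$, its Arakelov degree is at most $\degi{1}(\overline E)$, so $\ndeg(\overline L)\leqslant\degi{1}(\overline E)-\ndeg(\overline M)$ holds for every line $M\subseteq G$. Taking the supremum of $\ndeg(\overline M)$ over such $M$ gives $\ndeg(\overline L)\leqslant\degi{1}(\overline E)-\degi{1}(\overline G)$; and since $\ker f\subsetneq F^\vee$ (because $f\neq 0$), the quotient $\overline G$ appears in the infimum defining $\varsigma(\overline F^\vee)$, so $\degi{1}(\overline G)\geqslant\varsigma(\overline F^\vee)$. Passing to the supremum over all lines $L\subseteq E\otimes F$ delivers the claimed inequality.

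The one point that needs genuine care is the initial set-up: a common number field $K$ must be fixed over which models of $\overline E$, $\overline F$, the line $L$, and the subspace $\ker f$ are all defined, and the quotient norm family on $G$ descending from a model of $\overline F^\vee_K$ must be shown to be admissible. Both checks reduce to the fact that the projection construction $p_*$ preserves admissible families, which is immediate from the definitions in \S\ref{Subsec:norm families}. The substantive content of the argument is thus the factorization $f=\tilde f\circ\pi$ combined with the quotient-norm bound $\|\tilde f\|_v\leqslant\|f\|_v$, both of which are elementary.
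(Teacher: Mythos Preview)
Your proof is correct and follows essentially the same approach as the paper's: both pick a line $L$ with generator $f:F^\vee\to E$, pass to the quotient $G=F^\vee/\ker f$, compare $\ndeg(\overline M)$ with $\ndeg(\overline{\tilde f(M)})$ for lines $M\subset G$ via the operator-norm bound (which the paper phrases as a slope inequality), and then invoke the definition of $\varsigma$. Your write-up is simply more explicit about the factorization $f=\tilde f\circ\pi$, the inequality $\|\tilde f\|_v\leqslant\|f\|_v$, and the choice of a common field of definition, all of which the paper leaves implicit.
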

\begin{proof}
Let $L$ be a one-dimensional subspace of $E\otimes F$, considered as
a subspace of $\Hom(F^\vee,E)$. Pick a non-zero element $f$ in $L$.
If $M$ is a subspace of rank one of $F^\vee/\mathrm{Ker}(f)$, one
has
\[\begin{split}\widehat{\deg}_{\mathrm{n}}(\overline M)&\leqslant\ndeg(\overline{f(M)})-\ndeg(\overline L)\\
&\leqslant\degi{1}(\overline E)-\ndeg(\overline L),\end{split}\]
where the first inequality comes from the slope inequality, in
considering $\ndeg(\overline L)$ as the height of
$f:F^\vee\rightarrow E$ (cf. \cite[Proposition 4.5]{Bost2001}).
Since $M$ is arbitrary, one obtains
\[\ndeg(\overline L)\leqslant\degi{1}(\overline E)-\degi{1}(\overline{F^\vee/\mathrm{Ker}(f)})
\leqslant\degi{1}(\overline E)-\varsigma(\overline F^\vee).\]
\end{proof}

\begin{rema}
Let $k$ be a field of characteristic $0$, and $K$ be the algebraic
closure of $k(t)$ (the field of rational functions of one variable
with coefficients in $k$). Similarly to \S\ref{SubSec:hermitian}, we
can introduce the notion of (ultra)normed vector bundles over $K$ as
an equivalence class of adelic vector bundle (in the sense of
Gaudron) over a function field defined over $k$; such objects have been studied by Hoffmann, Jahnel, and Stuhler \cite{HoffmannJahnelStuhler98}. It can be show
that, for any non-zero normed vector bundle $\overline E$ over $K$,
one has $\degi{1}(\overline E)=\hat{\mu}_{\max}(\overline E)$ (see Proposition \ref{mumsudegg}),
and consequently $\varsigma(\overline
E)=\hat{\mu}_{\min}(\overline E)$. Moreover, the analogue of
Proposition \ref{Pro:majr} also holds in this setting. Therefore one
obtains (compare to Theorem \ref{Thm:NS'} and Lemma \ref{Lemm:udegmax}) \[\hat{\mu}_{\max}(\overline E\otimes\overline F)
\leqslant\hat{\mu}_{\max}(\overline
E)+\hat{\mu}_{\max}(\overline F)\] (note that in function field
case the $\varepsilon$-tensor product is just the usual tensor
product). However, in number field case, the inequality
$\degi{1}(\overline E)\leqslant\hat{\mu}_{\max}(\overline E)$
could be strict. The $A_2$ lattice provides such a
counter-example (see Proposition \ref{propA2}).

The above discussion also shows that, even if the normed vector
bundles $\overline E$ and $\overline F$ (over $\overline{\mathbb
Q}$) are hermitian, one should not expect a result of the form
\[\degi{1}(\overline E\otimes_{\varepsilon}\overline F)
\leqslant\degi{1}(\overline E)+\hat{\mu}_{\max}(\overline F).\]
In fact, consider a hermitian vector bundle $\overline E$ which is
semistable and such that $\degi{1}(\overline
E)<\hat{\mu}_{\max}(\overline E)$ (as $A_2$ lattice for
example). One has $\degi{1}(\overline
E\otimes_{\varepsilon}\overline E^\vee)\geqslant 0$ since the line
in $E\otimes E^\vee$ generated by the trace element has Arakelov
degree $0$. However, \[\degi{1}(\overline
E)+\hat{\mu}_{\max}(\overline
E^\vee)<\hat{\mu}_{\max}(\overline
E)+\hat{\mu}_{\max}(\overline E^\vee)=0.\]
\end{rema}

The above theorem leads to the following corollary by induction.
\begin{coro}\label{Cor:tensor}
For any integer $N\geqslant 2$ and  $N$ non-zero hermitian vector
bundles $\overline E_1,\ldots,\overline E_N$over $\overline{\mathbb
Q}$, the following inequality holds:
\begin{equation}
\degi{1}(\overline
E_1\otimes_{\varepsilon}\cdots\otimes_{\varepsilon} \overline
E_N)\leqslant\hat{\mu}_{\max}(\overline
E_1)+\hat{\mu}_{\max}(\overline E_2)-\sum_{3\leqslant i\leqslant
N}\varsigma(\overline E_i^\vee).
\end{equation}
\end{coro}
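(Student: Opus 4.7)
The plan is to prove the inequality by induction on $N$, with Proposition \ref{Pro:upp2} serving as the base case and Proposition \ref{Pro:majr} supplying the inductive step via the associativity (up to canonical isomorphism) of the $\varepsilon$-tensor product.

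For the base case $N=2$, there is nothing to do: the empty sum $\sum_{3 \leqslant i \leqslant 2} \varsigma(\overline E_i^\vee)$ is zero, so the claim reduces exactly to Proposition \ref{Pro:upp2}, namely
\[
\degi{1}(\overline E_1 \otimes_\varepsilon \overline E_2) \leqslant \hat\mu_{\max}(\overline E_1) + \hat\mu_{\max}(\overline E_2).
\]

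For the inductive step, assume the inequality for $N-1$ hermitian vector bundles and consider $\overline E_1, \ldots, \overline E_N$. Using the associativity of $\otimes_\varepsilon$ recalled in Section~2, I would identify
\[
\overline E_1 \otimes_\varepsilon \cdots \otimes_\varepsilon \overline E_N \;\simeq\; \bigl(\overline E_1 \otimes_\varepsilon \cdots \otimes_\varepsilon \overline E_{N-1}\bigr) \otimes_\varepsilon \overline E_N,
\]
and apply Proposition \ref{Pro:majr} to the two factors on the right (taking $\overline E := \overline E_1 \otimes_\varepsilon \cdots \otimes_\varepsilon \overline E_{N-1}$, viewed as a normed, not necessarily hermitian, vector bundle over $\overline{\mathbb Q}$, and $\overline F := \overline E_N$). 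This yields
\[
\degi{1}\!\bigl(\overline E_1 \otimes_\varepsilon \cdots \otimes_\varepsilon \overline E_N\bigr) \leqslant \degi{1}\!\bigl(\overline E_1 \otimes_\varepsilon \cdots \otimes_\varepsilon \overline E_{N-1}\bigr) - \varsigma(\overline E_N^\vee).
\]
Inserting the induction hypothesis
\[
\degi{1}\!\bigl(\overline E_1 \otimes_\varepsilon \cdots \otimes_\varepsilon \overline E_{N-1}\bigr) \leqslant \hat\mu_{\max}(\overline E_1) + \hat\mu_{\max}(\overline E_2) - \sum_{3 \leqslant i \leqslant N-1} \varsigma(\overline E_i^\vee)
\]
into the preceding inequality gives the desired bound for $N$.

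There is no real obstacle here: the argument is a clean induction that leverages precisely the fact, built into the statement of Proposition \ref{Pro:majr}, that the second factor is only required to be a normed vector bundle and that only its \emph{first degree} (not its maximal slope) enters on the right-hand side. This is what allows the recursion to proceed past $N=2$, since at each step the partial $\varepsilon$-tensor product is treated merely as a normed vector bundle; were Proposition \ref{Pro:upp2} alone available (with $\hat\mu_{\max}$ of both factors), the induction would not close, because one has no control of $\hat\mu_{\max}$ of an $\varepsilon$-tensor product in terms of the $\hat\mu_{\max}$ of its factors.
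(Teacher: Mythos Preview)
Your proof is correct and follows exactly the approach indicated in the paper, which simply states that the corollary follows from Proposition~\ref{Pro:majr} by induction. One small wording slip in your closing commentary: the crucial feature of Proposition~\ref{Pro:majr} you exploit is that the \emph{first} factor $\overline E$ may be an arbitrary normed (non-hermitian) bundle and only its first degree $\degi{1}(\overline E)$ appears on the right-hand side---this is what lets the induction close when $\overline E$ is the partial $\varepsilon$-tensor product.
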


\begin{theo}\label{Thm:mumax}
For non-zero hermitian vector bundles $\overline E$ and $\overline
F$ over $\overline{\mathbb Q}$, one has
\[\hat{\mu}_{\max}(\overline E\otimes\overline F)\leqslant\hat{\mu}_{\max}(\overline E)-\varsigma(\overline F^\vee)
%\leqslant
%\hat{\mu}_{\max}(\overline E)+\hat{\mu}_{\max}(F)+\frac 12\rho(\rang F).
\]
In particular,
\begin{equation}\label{Equ:maxleqsom}\hat{\mu}_{\max}(\overline E\otimes\overline F)\leqslant\hat{\mu}_{\max}(\overline E)+\hat{\mu}_{\max}(\overline F)+\frac12\ell\rang(F)).\end{equation}
\end{theo}
\begin{proof}
Let $V=(E\otimes F)_{\des}$. Consider the one dimensional vector
subspace $L$ of $V^{\vee}\otimes(E\otimes F)\cong \Hom(V,E\otimes
F)$ generated by the inclusion map of $V$ in $E\otimes F$. As a
normed subbundle of $\overline
V^{\vee}\otimes_{\varepsilon}(E\otimes F)$, one has $\ndeg(\overline
L)\geqslant 0$ since $L$ is generated by the inclusion map.
Moreover, one has
\[\ndeg(\overline L)\leqslant\degi{1}(\overline V^\vee\otimes_{\varepsilon}(\overline E\otimes\overline F))\leqslant\degi{1}(\overline V^\vee\otimes_{\varepsilon}\overline E\otimes_{\varepsilon}\overline F)\]
since the ${\varepsilon}$-tensor product norm is bounded from above
by the hermitian tensor product norm. By Corollary \ref{Cor:tensor},
one has
\[0\leqslant\ndeg(\overline L)\leqslant\hat{\mu}_{\max}(\overline V^\vee)+\hat{\mu}_{\max}(\overline E)-\varsigma(\overline F^\vee).\]
Since $\overline V$ is semi-stable, $\hat{\mu}_{\max}(\overline
V^\vee)=\hat{\mu}(\overline V^\vee)=-\hat{\mu}(\overline
V)=-\hat{\mu}_{\max}(\overline V)$. The proposition is thus
proved.
\end{proof}

\begin{rema} By the symmetry between $\overline E$ and $\overline F$, the inequality \eqref{Equ:maxleqsom} implies Theorem A. Moreover,
one obtains by induction from the previous theorem that, if
$(\overline E_i)_{i=1}^N$ is a finite family of non-zero hermitian
vector bundles over $\overline{\mathbb Q}$, then one has
\[\hat{\mu}_{\max}(\overline E_1\otimes\cdots
\otimes\overline E_N)\leqslant\hat{\mu}_{\max}(\overline
E_1)+\cdots+\hat{\mu}_{\max}(\overline
E_N)+\sum_{i=2}^N\frac12\ell\rang(E_i)).\]
%Joint with the fact that $\frac12\ellr)\geqslant\log\sqrt{r}$, it improves several upper bounds in the literature such as \cite[(3.37)]{Bost_Kunnemann} and \cite[Theorem 1.1]{Chen_pm}. Recently, by using another method, Andr\'e \cite{Andre10} has proved a weaker upper bound where the supplementary term on the right hand is $\sum_{i=1}^N\frac12\ell\rang(E_i))$ instead of $\sum_{i=2}^N\frac12\ell\rang(E_i))$.
\end{rema}

\section{Line subbundles of a hermitian tensor product}

In this section, we study the Arakelov degree of a line subbundle in the hermtian tensor product of two hermitian vector bundles over $\overline{\mathbb Q}$. Note that the hermitian tensor product metric is usually grater than the $\varepsilon$-tensor product (see \eqref{Equ:epsilon}). Therefore the upper bound \eqref{EQu:upp2} leads to a similar one for the hermitian tensor product case. As we shall show below, the upper bound obtained in such way can be refined where the tensorial rank of the line
subbundle appears (see Proposition \ref{Pro:majoration}). This method can be generalized to obtain an upper bound for the Arakelov degree of a vector subspace in the hermitian tensor product of two hermitian vector bundles, where the successive tensorial ranks of the vector subspace appear.

\subsection{Successive tensorial ranks of a line subbundle}

Let $K$ be a field and $E$ and $F$ be two vector spaces of finite
rank over $K$. We say that a vector in $E\otimes F$ is \emph{split}
if it can be written as the tensor product of a vector in $E$ and a
vector in $F$. For a non-zero vector $s\in E\otimes F$, the
\emph{tensorial rank} of $s$ is defined as the smallest integer $n\geqslant
1$ such that $s$ can be written as the sum of $n$ split vectors. The
tensorial rank of a non-zero vector $s\in E\otimes F$ is denoted by
$\rho(s)$. Note that the function $\rho(\cdot)$ is invariant by
dilations. Namely, for any non-zero element $a\in K$, one has
$\rho(as)=\rho(s)$. If $M$ is a vector subspace of rank one of
$E\otimes F$, we denote by $\rho(M)$ the tensorial rank of an arbitrary
non-zero element in $M$, called the \emph{tensorial rank} of $M$.

Let $s$ be a non-zero vector in $E\otimes F$. The tensorial rank of $s$ is
equal to the rank of $s$ considered as a $K$-linear map from
$E^\vee$ to $F$. If $M$ is a one-dimensional vector subspace of
$E\otimes F$, the tensorial rank of $M$ coincides with the rank of the image
of $M$ in $E$ (namely the smallest vector subspace $E_1$ of $E$ such
that $M\subset E_1\otimes F$), and also the rank of the image of $M$
in $F$.

The tensorial rank function is a geometric invariant which measures the
algebraic complexity of lines in a tensor product. For general
subspaces, we propose the following notion of successive tensorial ranks for
vector spaces over $\overline{\mathbb Q}$ (similar definition also
makes sense for vector spaces over a general algebraically closed
field, but we only need the restricted case in this article).

Let $E$ and $F$ be two vector spaces of finite rank over
$\overline{\mathbb Q}$, and $V$ be a non-zero subspace of rank $r$
of $E\otimes F$. For each integer $i\in\{1,\ldots,r\}$, let
\begin{equation}\rho_{i}(V):=\sup_{Z}\big(\inf\{\rho(M)\,|\,
M\in(\mathbb P(V^\vee)\setminus Z)(\overline{\mathbb Q})\}\big),
\end{equation}
where $Z$ runs over all closed subvarieties of codimension\footnote{By convention, in the case where $i=1$, the
condition $\mathrm{codim}(Z)=r$ means that the scheme $Z$ is empty.} $r-i+1$
in $\mathbb P(E^\vee)$. The integers $(\rho_{i}(V))_{i=1}^r$ are called the successive tensorial ranks of $V$.

The successive tensorial ranks can also be interpreted via the dimensions of the intersections of $V$ with the determinantal subvarieties of $E\otimes F$ (considered as an affine variety defined over $\overline{\mathbb Q}$). Recall that the $k$-th determinantal subvariety of $E\otimes F$ is the closed subvariety $D_k$ of $E\otimes F$ classifying the all tensor vectors which can be written as the sum of $k$ split vectors. Clearly one has
\[\{0\}=D_0\subset D_1\subset\ldots\subset D_k\subset D_{k+1}\subset\ldots,\]
and for any $k\geqslant\min(\rang(E),\rang(F))$, one has $D_k=E\otimes F$. With this notation, for any vector subspace $V$ of rank $r\geqslant 1$ of $E\otimes F$, one has
\[\rho_i(V)=\min\{k\,|\,\dim(V\cap D_k)\geqslant i\},\quad
\forall\,i\in\{1,\ldots,r\}.\]

\begin{rema}\label{Rem:ssl}
Let $\overline E$ and $\overline F$ be hermitian vector bundles over
$\overline{\mathbb Q}$, $\overline V$ be a hermitian vector
subbundle of rank $r\geqslant 1$ of $\overline E\otimes\overline F$
and $\epsilon$ be a positive real number. We can choose in a
recursive way one dimensional subspaces $L_1,\ldots,L_r$ in $V$
which are linearly independent and such that $\ndeg(\overline
L_i)\geqslant\degi{r}(\overline V)-\epsilon/r$ and
$\rho(L_i)\geqslant\rho_{i}(V)$. Note that by Theorem
\ref{Thm:Zhang}, one has
\[\ndeg(\overline E)\leqslant\sum_{i=1}^r\ndeg(\overline L_i)+\frac 12r\ell(r)+\epsilon.\]
This construction will be useful further in \S\ref{Sec:small rank}
for the study of upper bounds for the Arakelov degree of a vector
subbundle in the tensor product of two hermitian vector bundles.
\end{rema}

\subsection{An upper bound for the Arakelov degrees of line subbudles}

We begin by an upper bound for the Arakelov degree of an arbitrary
line subbundle of the tensor product of two hermitian vector bundles
which can be considered as a reformulation of Hadamard's inequality.
{This upper bound is a variant of \cite{Bost_Kunnemann} Proposition
3.4.1.}

\begin{prop}\label{Pro:majoration}
Let $\overline E$ and $\overline F$ be two hermitian vector bundles
over $\overline{\mathbb Q}$, and $M$ be a non-zero vector subspace
of rank $1$ of $E\otimes F$. Let $E_1$ and $F_1$ be the images of
$M$ in $E$ and in $F$ respectively. One has
\begin{equation}\label{Equ:sudeg1}\widehat{\deg}_{\mathrm{n}}(\overline M)
\leqslant\hat{\mu}(\overline E_1)+\hat{\mu}(\overline
F_1)-\frac 12\log\rho(M).\end{equation} In particular, one has
\begin{equation}\label{Equ:sudeg2}\widehat{\deg}_{\mathrm{n}}(\overline M)
\leqslant\hat{\mu}_{\max}(\overline
E)+\hat{\mu}_{\max}(\overline F)-\frac
12\log\rho(M).\end{equation}
\end{prop}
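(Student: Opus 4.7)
Let $r := \rho(M)$. My plan is to build from a generator $l$ of $M$ a canonical nonzero linear map $\lambda : M^{\otimes r} \to \Lambda^r E_1 \otimes \Lambda^r F_1$ between hermitian lines over $\overline{\mathbb Q}$, and then apply the slope inequality (Proposition \ref{Pro:slopeinequality}) to deduce \eqref{Equ:sudeg1}. This transposes to the arithmetic setting over $\overline{\mathbb Q}$ the construction used in the proof of Lemma \ref{Lemm:udegmax}.

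First I would fix a nonzero $l \in M$ and view it as a $\overline{\mathbb Q}$-linear map $\tilde l : E_1^\vee \to F_1$, which is an isomorphism by the very definition of $\rho(M) = r$. Its $r$-th exterior power $\Lambda^r \tilde l \in \Hom(\Lambda^r E_1^\vee, \Lambda^r F_1) \simeq \Lambda^r E_1 \otimes \Lambda^r F_1$ is nonzero and depends on $l$ as a homogeneous polynomial of degree $r$; since $\dim M = 1$, this yields a well-defined nonzero linear map $\lambda$ sending $l^{\otimes r}$ to $\Lambda^r \tilde l$. Proposition \ref{Pro:slopeinequality} combined with the additivity of $\ndeg$ on line bundles then gives
$$ r\,\ndeg(\overline M) \leqslant r\,\hat{\mu}(\overline{E_1}) + r\,\hat{\mu}(\overline{F_1}) + h(\lambda), $$
so it remains to show $h(\lambda) \leqslant -\tfrac{r}{2}\log r$.

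I would estimate $h(\lambda)$ one place at a time. At an archimedean $v$, the inherited hermitian norm of $l \in M$ equals the Hilbert--Schmidt norm of $\tilde l$ viewed as an operator between the hermitian subspaces $E_{1,v}^\vee$ and $F_{1,v}$, and the induced hermitian norm on $\Lambda^r E_1 \otimes \Lambda^r F_1$ assigns to $\Lambda^r \tilde l$ the value $|\det \tilde l|$, which is the product of the singular values $\sigma_1, \ldots, \sigma_r$ of $\tilde l$. The AM--GM inequality applied to the squares then gives
$$|\det \tilde l|^{2} \,=\, \prod_{i=1}^r \sigma_i^2 \,\leqslant\, \Bigl(\tfrac{1}{r}\sum_{i=1}^r \sigma_i^2\Bigr)^{\!r} \,=\, r^{-r}\,\|\tilde l\|_{HS}^{2r},$$
so $\|\lambda_v\|_v \leqslant r^{-r/2}$, contributing $-\tfrac{r}{2}\log r$ to $h(\lambda)$. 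At a non-archimedean $v$ the hermitian and $\varepsilon$-tensor norms coincide, and the standard ultrametric estimate $|\det A|_v \leqslant \|A\|_{op,v}^r$ for a $\mathbb C_v$-linear operator $A$ of rank $r$ between ultranormed spaces gives $\|\lambda_v\|_v \leqslant 1$, contributing $\leqslant 0$. Summing over places yields $h(\lambda) \leqslant -\tfrac{r}{2}\log r$.

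Dividing by $r$ gives \eqref{Equ:sudeg1}, and \eqref{Equ:sudeg2} is immediate since $\hat{\mu}(\overline{E_1}) \leqslant \hat{\mu}_{\max}(\overline E)$ and $\hat{\mu}(\overline{F_1}) \leqslant \hat{\mu}_{\max}(\overline F)$. The main obstacle I anticipate is justifying rigorously, over $\overline{\mathbb Q}$ rather than just over pure models, the non-archimedean operator-norm bound for $\det$ with respect to the induced norms on $E_1$, $F_1$ and their exterior powers. I would handle this by approximation via Remark \ref{Rem:approx}, reducing to the case of a classical hermitian vector bundle on $\Spec\mathcal O_K$ admitting a pure model, where the bound is standard.
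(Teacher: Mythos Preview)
Your proposal is correct and follows essentially the same approach as the paper: both construct the natural nonzero map $M^{\otimes r}\to\Lambda^r E_1\otimes\Lambda^r F_1$ induced by a generator of $M$, bound its height place by place, and apply the slope inequality. The only cosmetic differences are that the paper factors the map through $E_1^{\otimes r}\otimes F_1^{\otimes r}$ (picking up an extra $r!$, later cancelled by the product formula) and phrases the archimedean estimate as a Gram-matrix Hadamard-type inequality rather than your equivalent singular-value AM--GM argument.
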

\begin{proof}
The inequality \eqref{Equ:sudeg2} is a direct consequence of
\eqref{Equ:sudeg1}. In the following, we prove the first one. By
definition, $E_1$ and $F_1$ are respectively vector subspaces of
rank $\rho(M)$ of $E$ and $F$. Consider the $K$-linear maps
\begin{equation}\label{Equ:compos}M^{\otimes r}\longrightarrow E_1^{\otimes r}\otimes F_1^{\otimes r}\longrightarrow \Lambda^r(E_1)\otimes\Lambda^r(F_1),\end{equation}
where $r=\rho(M)$. Denote by $\varphi$ the composed map. Let
$(e_i)_{i=1}^r$ and $(f_i)_{i=1}^r$ be respectively a basis of $E_1$
and $F_1$ such that
\[\alpha=e_1\otimes f_1+\cdots+e_r\otimes f_r\]
is a non-zero element in $M$. The image of $\alpha^{\otimes r}$ by
the composed map \eqref{Equ:compos} is just
\begin{equation}\label{Equ:varphi}\varphi(\alpha^{\otimes r})=r!(e_1\wedge\cdots\wedge e_r)\otimes(f_1\wedge\cdots\wedge f_r).\end{equation}
Let $K$ be a number field such that $(e_i)_{i=1}^r$ (resp.
$(f_i)_{i=1}^r$) gives rise to a basis of a model $\mathcal E_{1,K}$
(resp. $\mathcal F_{1,K}$) of $E_1$ (resp. $F_1$) over $K$. Thus the
homomorphism $\varphi$ gives rise to a $K$-linear map
$\widetilde{\varphi}$ from $K\alpha^{\otimes r}$ to
$\Lambda^r(\mathcal E_{1,K})\otimes\Lambda^r(\mathcal F_{1,K})$.

By \eqref{Equ:varphi}, for any finite place $\mathfrak p$ of $K$,
one has
\[\|\widetilde{\varphi}\|_{\mathfrak p}\leqslant|r!|_{\mathfrak p}.\]
In fact, if $\|\alpha\|_{\mathfrak p}<1$, then we can choose $(e_i)_{i=1}^r$ and $(f_i)_{i=1}^r$ such that $\displaystyle\max_{1\leqslant i\leqslant r}\|e_i\|_{\mathfrak p}\leqslant 1$ and $\displaystyle\max_{1\leqslant i\leqslant r}\|f_i\|_{\mathfrak p}\leqslant 1$ since $\|\alpha\|_{\mathfrak p}$ is the operator norm of $\alpha_{\mathbb C_{\mathfrak p}}:E_{1,\mathbb C_{\mathfrak p}}^\vee\rightarrow F_{1,\mathbb C_p}$.
Moreover, for any infinite place $\sigma$ of $K$, one has
\[\|\alpha\|_\sigma^{2r}=\bigg(\sum_{1\leqslant i,j\leqslant r}\langle e_i,e_j\rangle_\sigma\langle f_i,f_j\rangle_\sigma\bigg)^r\geqslant
r^r\|e_1\wedge\cdots\wedge
e_r\|_\sigma^2\cdot\|f_1\wedge\cdots\wedge f_r\|_\sigma^2.\] Hence
\[\|\widetilde{\varphi}\|_{\sigma}=|r!|_\sigma\cdot r^{-r}.\]
By the slope inequality (cf. Proposition \ref{Pro:slopeinequality})
and the product formula,
\[r\,\widehat{\deg}_{\mathrm{n}}(\overline M)
\leqslant\widehat{\deg}_{\mathrm{n}}(\overline E_1)+
\widehat{\deg}_{\mathrm{n}}(\overline F_1)-\frac{r}{2}\log(r).\]
Therefore,
\[\widehat{\deg}_{\mathrm{n}}(\overline M)\leqslant\hat{\mu}(\overline E_1)+\hat{\mu}(\overline F_1)-\frac 12\log\rho(M).\]
\end{proof}

We obtain from \eqref{Equ:sudeg2} that, if $V$ is a non-zero vector
subspace of $E\otimes F$, then one has
\begin{equation}\label{Equ:upp}\degi{1}(\overline V)+\frac 12\log\rho_{1}(\overline V)\leqslant\hat{\mu}_{\max}(\overline E)+\hat{\mu}_{\max}(\overline F).\end{equation}
By Theorem \ref{Thm:Zhang}, we obtain the following variant of
\eqref{Equ:upp}.

\begin{prop}\label{Pro:majo de mu}
Let $\overline E$ and $\overline F$ be two hermitian vector bundle
over $K$, and $V$ be a non-zero vector subspace of $E\otimes F$. One
has
\begin{equation}\label{Equ:majoV}
\hat{\mu}(\overline V)\leqslant\hat{\mu}_{\max}(\overline
E)+\hat{\mu}_{\max}(\overline
F)+\frac 12\ell(r)-\frac{1}{2r}\sum_{i=1}^r\log\rho_{i}(V),
\end{equation}
where $r$ is the rank of $V$. In particular, the inequality
\[\hat{\mu}(\overline V)\leqslant\hat{\mu}_{\max}(\overline E)+\hat{\mu}_{\max}(\overline F)\]
holds as soon as
\[\prod_{i=1}^r\rho_i(V)\geqslant \exp\big(r\ell(r)\big).\]
\end{prop}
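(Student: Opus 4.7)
The plan is to combine Remark \ref{Rem:ssl}, which provides a basis of lines in $V$ with simultaneously controlled degrees and tensorial ranks, with the pointwise upper bound of Proposition \ref{Pro:majoration} applied line by line.

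More precisely, fix $\epsilon>0$. First, I would invoke Remark \ref{Rem:ssl} to obtain linearly independent one-dimensional subspaces $L_1,\ldots,L_r$ of $V$ such that $\rho(L_i)\geqslant \rho_i(V)$ for each $i$, and such that Zhang's bound (Theorem \ref{Thm:Zhang}) applied to $\overline V$ via these lines yields
\[
\ndeg(\overline V)\leqslant \sum_{i=1}^r\ndeg(\overline L_i)+\frac12 r\ell(r)+\epsilon.
\]
Next, Proposition \ref{Pro:majoration} (inequality \eqref{Equ:sudeg2}), applied to each $L_i$ as a line subbundle of $\overline E\otimes\overline F$, gives
\[
\ndeg(\overline L_i)\leqslant \hat{\mu}_{\max}(\overline E)+\hat{\mu}_{\max}(\overline F)-\frac12\log\rho(L_i)\leqslant \hat{\mu}_{\max}(\overline E)+\hat{\mu}_{\max}(\overline F)-\frac12\log\rho_i(V),
\]
where the last inequality uses $\rho(L_i)\geqslant \rho_i(V)$ together with the monotonicity of $\log$.

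Summing these $r$ inequalities, plugging into the Zhang bound, dividing by $r$ and letting $\epsilon\to 0^+$ yields the desired estimate \eqref{Equ:majoV}. The final statement, concerning the sufficient condition $\prod_i\rho_i(V)\geqslant\exp(r\ell(r))$, follows directly from \eqref{Equ:majoV} since it is exactly the condition under which $\tfrac12\ell(r)-\tfrac{1}{2r}\sum_i\log\rho_i(V)\leqslant 0$.

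The only non-routine input is Remark \ref{Rem:ssl}: the simultaneous control of $\ndeg(\overline L_i)$ (for Zhang's lemma to be applicable) \emph{and} of $\rho(L_i)\geqslant\rho_i(V)$. This is the geometric heart of the matter and is where the definition of the successive tensorial ranks $\rho_i(V)$ via codimension-$(r-i+1)$ subvarieties of $\mathbb P(V^\vee)$ is used: at the $i$-th recursive step one avoids a closed subvariety witnessing $\rho_i(V)$ while simultaneously staying outside the subvarieties coming from Zhang's successive minima, which is possible since both conditions exclude only proper Zariski-closed subsets. Once this is granted, the remainder of the argument is the straightforward summation above.
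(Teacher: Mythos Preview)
Your proof is correct and follows essentially the same approach as the paper. The only difference in packaging is that the paper argues slightly more directly: it observes that for each $i$ one has $\degi{i}(\overline V)\leqslant\hat{\mu}_{\max}(\overline E)+\hat{\mu}_{\max}(\overline F)-\tfrac12\log\rho_i(V)$ (by taking, in the definition of $\degi{i}$, the closed set $Z$ to be the determinantal locus $\mathbb P(V\cap D_{\rho_i(V)-1})$, which has the right codimension and outside of which every line has tensorial rank $\geqslant\rho_i(V)$), and then plugs these bounds into Zhang's inequality $\ndeg(\overline V)\leqslant\sum_i\degi{i}(\overline V)+\tfrac12 r\ell(r)$. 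Your route via the concrete lines of Remark~\ref{Rem:ssl} unwinds exactly this, with the $\epsilon$ and the limiting step absorbing the same content; the two arguments are equivalent.
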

\begin{proof}
By the previous proposition, one obtains that
\[\degi{i}(\overline V)\leqslant\hat{\mu}_{\max}(\overline E)+\hat{\mu}_{\max}(\overline F)-\frac 12\log\rho_{i}(V).\]
Therefore Theorem \ref{Thm:Zhang} implies that
\[\ndeg(\overline V)\leqslant\sum_{i=1}^r\degi{i}(\overline V)+\frac 12r\ell(r)\leqslant r(\hat{\mu}_{\max}(\overline E)+\hat{\mu}_{\max}(\overline F))+\frac 12r\ell(r)-\frac 12\sum_{i=1}\log\rho_{i}(V)\]
which gives \eqref{Equ:majoV}.
\end{proof}

\section{Filtrations}\label{Filtr}

In this section, the expression $K$ denotes an arbitrary field. Let
$W$ be a vector space of finite rank over $K$. We call
\emph{filtration} of $W$ any decreasing family $(\mathcal
F^tW)_{t\in\mathbb R}$ of vector subspaces of $W$.  We assume in convention that any filtration $\mathcal F$ is
separated ($\mathcal F^tW=0$ for $t$ sufficiently positive),
exhaustive ($\mathcal F^tW=W$ for $t$ sufficiently negative), and
left continuous (the function $t\mapsto\rang(\mathcal F^tW)$ is
locally constant on left). The set of all filtrations of $W$ is
denoted by $\mathbf{Fil}(W)$. If $\mathcal F$ is a filtration of
$W$, we denote by $\lambda_{\mathcal F}:W\rightarrow\mathbb
R\cup\{+\infty\}$ the map which sends $x\in W$ to $\sup\{t\in\mathbb
R\,|\,x\in\mathcal F^tW\}$. This function takes finite values on
$W\setminus\{0\}$.

\subsection{Filtration as a weighted flag}\label{SubSec:weighted} Let $W$ be a finite dimensional vector space over ${K}$.
A filtration $\mathcal F$ of $W$ can be considered as an increasing
flag of $W$~:
\begin{equation}\label{Equ:flag}0=W_0\subsetneq W_1\subsetneq\ldots\subsetneq W_n=W\end{equation}
together with a strictly decreasing sequence of real numbers
\[a_1>\ldots>a_n\]
which describes the jumps of the filtration. One has $\mathcal
F^tW=W$ if $t\leqslant a_n$, $\mathcal F^tW=0$ if $t>a_1$; and if
$i\in\{1,\ldots,n-1\}$ and $t\in\,]a_{i+1},a_{i}]$, one  has
$\mathcal F^tW=W_i$. We say that a basis $\mathbf{e}$ of $W$ is
\emph{compatible} with the filtration $\mathcal F$ if for any
$t\in\mathbb R$ one has
\[\rang(\mathcal F^tW)=\#(\mathbf{e}\cap\mathcal F^tW).\]
Note that this definition only depends on the flag associated to the
filtration. In fact, the basis $\mathbf{e}$ is compatible with the
filtration $\mathcal F$ if and only if it is \emph{compatible} with
the flag \eqref{Equ:flag}, namely for any $i\in\{1,\ldots, n\}$ one
has $\#(\mathbf{e}\cap W_i)=\rang(W_i)$. Moreover, by  Bruhat
decomposition for general linear groups, we obtain that, for all
filtrations $\mathcal F_1$ and $\mathcal F_2$ of $W$, there always
exists a basis of $W$ which is simultaneously compatible with
$\mathcal F_1$ and $\mathcal F_2$.

Given a basis $\mathbf{e}=(e_j)_{j=1}^r$ of a finite dimensional
vector space $W$ over ${K}$ and a map
$\varphi:\mathbf{e}\rightarrow\mathbb R$, one can construct a
filtration $\mathcal F_{\mathbf{e},\varphi}$ of $W$ as follows. For
any $t\in\mathbb R$, $\mathcal F_{\mathbf{e},\varphi}^t(W)$ is taken
as the vector subspace of $W$ generated by the elements
$e_j\in\mathbf{e}$ such that $\varphi(e_j)\geqslant t$. The basis
$\mathbf{e}$ is compatible with the filtration $\mathcal
F_{\mathbf{e},\varphi}$, and the restriction of $\lambda_{\mathcal
F_{\mathbf{e},\varphi}}$ on $\mathbf{e}$ coincides with $\varphi$.
The filtration $\mathcal F_{\mathbf{e},\varphi}$ is said to be
\emph{associated} to the basis $\mathbf{e}$ and the function
$\varphi$.

Conversely, given a finite dimensional vector space $W$ over $K$
equipped with a filtration $\mathcal F$, and a basis
$\mathbf{e}=(e_j)_{j=1}^r$ of $W$ which is compatible with $\mathcal
F$. If we denote by $\varphi$ the restriction of $\lambda_{\mathcal
F}$ on $\mathbf{e}$, then the filtration associated to the basis
$\mathbf{e}$ and the function $\varphi$ coincides with $\mathcal F$.
In particular, for any element $x=a_1e_1+ \cdots+a_re_r\in W$, one
has
\begin{equation}\label{Equ:lambdaFx}\lambda_{\mathcal F}(x)=\min\{\lambda_{\mathcal F}(e_j)
\,|\,1\leqslant j\leqslant r,\,a_j\neq 0\}.\end{equation}

\subsection{Filtration as a norm} Let $W$ be a finite dimensional vector space over $K$. If $\mathcal F$ is a filtration of $W$, one has
\[\lambda_{\mathcal F}(x+y)\geqslant\min(\lambda_{\mathcal F}(x),\lambda_{\mathcal F}(y)),\qquad\lambda_{\mathcal F}(ax)=
\lambda_{\mathcal F}(x)\] for any $a\in K\setminus\{0\}$ and all
$x,y\in W$. Therefore the function \[(x\in W)\mapsto
\|x\|:=\exp(-\lambda_{\mathcal F}(x))\] is actually a norm on $W$,
where we have considered the trivial absolute value $|.|$ on $K$
such that $|a|=1$ for any $a\in K\setminus\{0\}$.

Conversely, given a norm $\|.\|$ on the vector space $W$, one
obtains a filtration $\mathcal F_{\|.\|}$ such that
\[\mathcal F^t_{\|.\|}(W)=\{x\in W\,:\,\|x\|\leqslant \exp(-t)\}.\]
For any $x\in W$, one has
\[\|x\|=\exp(-\lambda_{\mathcal F_{\|.\|}}(x)).\]

Given a filtration $\mathcal F$ of $W$ which corresponds to the norm
$\|.\|$. By \eqref{Equ:lambdaFx} we obtain that a basis
$\mathbf{e}=(e_j)_{j=1}^r$ is compatible with the filtration
$\mathcal F$ if and only if it is an orthogonal basis of $W$ with
respect to the norm $\|.\|$, namely for any
$x=a_1e_1+\cdots+a_re_r\in W$ one has \[\|x\|=\max_{1\leqslant
i\leqslant r}\|a_ie_i\|=\max_{\begin{subarray}{c}
1\leqslant i\leqslant r\\
a_i\neq 0
\end{subarray}}\|e_i\|.\]

\subsection{Expectation}
Let $W$ be a finite dimensional vector space over ${K}$ and
$\mathcal F$ be a filtration of $W$. For any $t\in\mathbb R$, we
denote by $\mathrm{sq}_{\mathcal F}^tW$ (or simply $\mathrm{sq}^tW$)
the sub-quotient $\mathcal F^tW/\mathcal F^{t+}W$, where $\mathcal
F^{t+}W$ is defined as $\bigcup_{\varepsilon>0}\mathcal
F^{t+\varepsilon}W$. Note that there exists a finite set such that,
for any $t\in\mathbb R$ outside of this set, one has
$\mathrm{sq}_{\mathcal F}^tW=0$. Hence the sum
\[\sum_{t\in\mathbb R}t\rang(\mathrm{sq}_{\mathcal F}^tW)\]
is well defined (since it is actually a finite sum). If $W$ is
non-zero, we define the \emph{expectation} of $\mathcal F$ as
\begin{equation}\label{Equ:exp}\mathbb E[\mathcal F]:=\frac{1}{\rang(W)}\sum_{t\in\mathbb R}t\rang(\mathrm{sq}_{\mathcal F}^tW).\end{equation}
Assume that the filtration $\mathcal F$ corresponds to the flag
\[0=W_0\subsetneq W_1\subsetneq\ldots\subsetneq W_n=W\]
together with the sequence
\[a_1>\ldots>a_n.\]
One has
\[\mathbb E[\mathcal F]=\frac{1}{\rang(W)}\sum_{i=1}^na_i\rang(W_i/W_{i-1}).\]
We can also write $\mathbb E[\mathcal F]$ as an integral
\begin{equation}\mathbb E[\mathcal F]=-\frac{1}{\rang(W)}\int_{\mathbb R}
t\,\mathrm d\rang(\mathcal F^tW)=a+\int_a^{+\infty}
\frac{\rang(\mathcal F^tW)}{\rang(W)}\,\mathrm{d}t,\end{equation}
where $a$ is a sufficient negative number (such that $\mathcal
F^aW=W$).

Assume that $\mathbf{e}=(e_j)_{j=1}^r$ is a basis of $W$ which is
compatible with the filtration $\mathcal F$. The projection of
$\mathcal F^tW$ onto $\mathrm{sq}^t(W)$ induces a bijection between
$\{e_j\,|\,\lambda_{\mathcal F}(e_j)=t\}$ and its image. Moreover,
the image of $\{e_j\,|\,\lambda_{\mathcal F}(e_j)=t\}$ in the
quotient space $\mathrm{sq}^t(W)$ forms a basis of the latter.
Therefore, one has
\[\mathbb E[\mathcal F]=\frac{1}{r}\sum_{j=1}^r\lambda_{\mathcal F}(e_j).\]

If $\mathbf{e}'=(e_j')_{j=1}^r$ is another basis of $W$ which is not
necessarily compatible with the filtration $\mathcal F$, one has
\begin{equation}\label{Equ:ineqliat}
\mathbb E[\mathcal F]\geqslant
\frac{1}{r}\sum_{j=1}^r\lambda_{\mathcal F}(e_j').
\end{equation}

The zero vector space over ${K}$ has only one filtration. Its
expectation is defined to be zero by convention.

\subsection{Random variable associated to a filtration}
Let $W$ be a non-zero vector space of finite rank over ${K}$ and
$\mathcal F$ be a filtration of $W$. We associate to the filtration
$\mathcal F$ a random variable $Z_{\mathcal F}$ on $\{1,\ldots,r\}$
equipped with the equidistributed probability measure such that
\[Z_{\mathcal F}(i):=\sup\{t\in\mathbb R\,|\,\rang(\mathcal F^tW)\geqslant i\}.\]
If the filtration $\mathcal F$ corresponds to the flag
\[0=W_0\subsetneq W_1\subsetneq\ldots\subsetneq W_n=W\]
together with the decreasing sequence $a_1>\ldots>a_n$, then
$\{a_1,\ldots,a_n\}$ is just the image of the random variable
$Z_{\mathcal F}$. By definition $\mathbb E[\mathcal F]$ coincides
with the expectation of the random variable $Z_{\mathcal F}$. Denote
by $\|\mathcal F\|$ the number $\mathbb E[Z_{\mathcal F}^2]^{1/2}$.
We say that $\mathcal F$ is \emph{trivial} if $\|\mathcal F\|=0$,
namely the filtration $\mathcal F$ has only a jump point which is
$0$. We say that the filtration $\mathcal F$ is \emph{degenerated}
if the random variable $Z_{\mathcal F}$ is constant. If $\mathcal F$
and $\mathcal G$ are two filtrations of $W$, we denote by
$\langle\mathcal F,\mathcal G\rangle$ the expectation of
$Z_{\mathcal F}Z_{\mathcal G}$. By definition one has
$\langle\mathcal F,\mathcal F\rangle=\|\mathcal F\|^2$.

\subsection{Construction of filtrations} We explain below how to construct filtrations from the existing ones.

\subsubsection{Restricted filtration}
Let $W$ be a finite dimensional vector space over ${K}$ and $V$ be a
vector subspace of $W$. If $\mathcal F$ is a filtration of $W$, we
denote by $\mathcal F|_{V}$ the filtration of $V$ such
that\[\forall\,t\in\mathbb R,\quad(\mathcal F|_{V})^tV:=V\cap
\mathcal F^tW,\] called the \emph{restriction} of $\mathcal F$ on
$V$. The norm corresponding to $\mathcal F|_V$ is just the induced
norm on $V$ (from that corresponding to $\mathcal F$).

\subsubsection{Quotient filtration}
Let $W$ be a finite dimensional vector space over $K$, $V$ be a
vector subspace of $W$ and $\pi:W\rightarrow W/V$ be the quotient
map. Any filtration $\mathcal F$ of $W$ leads to a \emph{quotient
filtration} $\pi(\mathcal F)$ of $W/V$ such that
\[\forall\,t\in\mathbb R,\quad \pi(\mathcal F)^t(W/V)=\pi(\mathcal F^t(W)).\]
The norm on $W/V$ corresponding to $\pi(\mathcal F)$ is the quotient
norm.

\subsubsection{Tensor product}Let $E$ and $F$ be two finite dimensional vector spaces over ${K}$. Suppose given a filtration $\mathcal F$ of $E$ and a filtration $\mathcal G$ of $F$. We define their \emph{tensor product}  as the filtration $\mathcal F\otimes\mathcal G$ of $E\otimes F$ such that
\[(\mathcal F\otimes\mathcal G)^t(E\otimes F):=
\sum_{a+b=t}\mathcal F^aE\otimes\mathcal G^bF.\]

\begin{prop}\label{Pro:subquotient tp}
For any $t\in\mathbb R$, the sub-quotient $\mathrm{sq}^t_{\mathcal
F\otimes\mathcal G}(E\otimes F)$ of the tensor product filtration is
isomorphic to
\[\bigoplus_{a+b=t}\mathrm{sq}^a_{\mathcal F}(E)\otimes
\mathrm{sq}^b_{\mathcal G}(F).\] In particular, the following
equality holds
\begin{equation}\label{Equ:expect}\mathbb E[\mathcal F\otimes\mathcal G]=\mathbb E[\mathcal F]+\mathbb E[\mathcal G]\end{equation}
provided that neither $E$ nor $F$ is the zero vector space.
\end{prop}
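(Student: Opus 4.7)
The plan is to reduce everything to an explicit basis computation, using the fact (recalled just above the statement, via Bruhat decomposition) that one can always find a basis of a vector space simultaneously compatible with finitely many filtrations.

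First I would fix a basis $\mathbf{e} = (e_i)_{1\leqslant i \leqslant m}$ of $E$ compatible with $\mathcal F$ and a basis $\mathbf{f} = (f_j)_{1\leqslant j \leqslant n}$ of $F$ compatible with $\mathcal G$, and consider the product basis $\mathbf{e} \otimes \mathbf{f} := (e_i \otimes f_j)_{i,j}$ of $E \otimes F$. The key technical lemma to establish is the identity
\[
\lambda_{\mathcal F \otimes \mathcal G}(e_i \otimes f_j) = \lambda_{\mathcal F}(e_i) + \lambda_{\mathcal G}(f_j)
\]
for every $(i,j)$, together with the fact that $\mathbf{e} \otimes \mathbf{f}$ is compatible with $\mathcal F \otimes \mathcal G$. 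The inequality $\geqslant$ is immediate from the definition of $\mathcal F \otimes \mathcal G$ as a sum. For the reverse inequality, I would observe that if we set $H^t := \sum_{a+b \geqslant t} \mathcal F^a E \otimes \mathcal G^b F$, then $H^t$ is spanned by $\{e_i \otimes f_j : \lambda_{\mathcal F}(e_i) + \lambda_{\mathcal G}(f_j) \geqslant t\}$ (since compatibility of $\mathbf{e}$ with $\mathcal F$ means $\mathcal F^aE$ is spanned by the $e_i$ with $\lambda_{\mathcal F}(e_i)\geqslant a$, and similarly for $\mathbf{f}$). Because $(e_i \otimes f_j)$ is a basis of $E \otimes F$, this description pins down $H^t$ exactly, and therefore $\lambda_{\mathcal F \otimes \mathcal G}(e_i\otimes f_j) = \lambda_{\mathcal F}(e_i) + \lambda_{\mathcal G}(f_j)$.

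Next, to get the sub-quotient decomposition, I would define for each pair $(a,b)$ the natural bilinear map $\mathcal F^a E \times \mathcal G^b F \to \mathrm{sq}^{a+b}_{\mathcal F \otimes \mathcal G}(E\otimes F)$ coming from the inclusion $\mathcal F^a E \otimes \mathcal G^b F \hookrightarrow (\mathcal F \otimes \mathcal G)^{a+b}(E\otimes F)$ followed by the projection onto the sub-quotient. It vanishes on $\mathcal F^{a+}E \times \mathcal G^b F$ and on $\mathcal F^a E \times \mathcal G^{b+}F$, hence factors through a linear map $\mathrm{sq}^a_{\mathcal F}(E) \otimes \mathrm{sq}^b_{\mathcal G}(F) \to \mathrm{sq}^{a+b}_{\mathcal F \otimes \mathcal G}(E \otimes F)$. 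Summing over $a+b=t$ yields the candidate isomorphism. Using the compatible bases: the left-hand side has a basis indexed by pairs $(i,j)$ with $\lambda_{\mathcal F}(e_i)=a$, $\lambda_{\mathcal G}(f_j)=b$, $a+b=t$, and the map sends $[e_i]\otimes[f_j]$ to $[e_i\otimes f_j]$, which by the first step forms a basis of $\mathrm{sq}^t_{\mathcal F\otimes\mathcal G}(E\otimes F)$. Hence the map is a bijection on bases, proving the first assertion.

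For the expectation identity, assuming $E, F \neq 0$, I would use the basis formula $\mathbb E[\mathcal F] = \frac{1}{m}\sum_i \lambda_{\mathcal F}(e_i)$ and the analogous one for $\mathcal G$, together with the identity established above, to compute
\[
\mathbb E[\mathcal F \otimes \mathcal G] = \frac{1}{mn}\sum_{i,j}\bigl(\lambda_{\mathcal F}(e_i)+\lambda_{\mathcal G}(f_j)\bigr) = \mathbb E[\mathcal F] + \mathbb E[\mathcal G].
\]
The only mildly delicate point in the whole argument is ensuring that the description of $H^t$ via the product basis is exact (not merely an inclusion of spans), but this follows at once from the fact that $(e_i\otimes f_j)_{i,j}$ is a basis of $E\otimes F$, so that any subspace spanned by a subset of basis elements is determined by that subset.
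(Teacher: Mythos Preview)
Your proof is correct and follows essentially the same approach as the paper: both arguments fix bases of $E$ and $F$ compatible with $\mathcal F$ and $\mathcal G$, verify that the product basis is compatible with $\mathcal F\otimes\mathcal G$ with $\lambda_{\mathcal F\otimes\mathcal G}(e_i\otimes f_j)=\lambda_{\mathcal F}(e_i)+\lambda_{\mathcal G}(f_j)$, and read off both the sub-quotient decomposition and the expectation identity from this. The only cosmetic differences are that you construct the isomorphism via an explicit bilinear map (the paper just exhibits the bijection on basis elements) and that you derive $\mathbb E[\mathcal F\otimes\mathcal G]=\mathbb E[\mathcal F]+\mathbb E[\mathcal G]$ from the basis formula for expectation, whereas the paper goes through the rank identity $\rang(\mathrm{sq}^t_{\mathcal F\otimes\mathcal G}(E\otimes F))=\sum_{a+b=t}\rang(\mathrm{sq}^a_{\mathcal F}(E))\rang(\mathrm{sq}^b_{\mathcal G}(F))$; these are equivalent one-line computations. (One incidental remark: you do not need Bruhat here, since you are choosing separate bases on $E$ and on $F$, each compatible with a single filtration.)
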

\begin{proof}
Let $\mathbf{e}=(e_i)_{i=1}^n$ and $\mathbf{f}=(f_j)_{j=1}^m$ be
respectively bases of $E$ and $F$ which are compatible with the
filtrations $\mathcal F$ and $\mathcal G$. Let $\varphi$ and $\psi$
be respectively the restriction of $\lambda_{\mathcal F}$ and
$\lambda_{\mathcal G}$ on $\mathbf{e}$ and on $\mathbf{f}$. Then the
filtration $\mathcal F\otimes\mathcal G$ is associated to the basis
\[\mathbf{e}\otimes\mathbf{f}:=\{e_i\otimes f_j\,|\,1\leqslant i\leqslant n,\;1\leqslant j\leqslant m\}\]
of $E\otimes F$ and the function $\varphi\otimes\psi$ which sends
$e_i\otimes f_j$ to $\varphi(e_i)+\psi(f_j)$. For any $t\in\mathbb
R$, let $\mathbf{e}_t=\{e_i\,|\,\lambda_{\mathcal F}(e_i)=t\}$ and
$\mathbf{f}_t=\{f_j\,|\,\lambda_{\mathcal G}(f_j)=t\}$. The
bijection between $\bigcup_{a+b=t}\mathbf{e}_a\times\mathbf{f}_b$
and
\[(\mathbf{e}\otimes\mathbf{f})_t=\{e_i\otimes f_j\,|\,
\lambda_{\mathcal F}(e_i)+\lambda_{\mathcal G}(f_j)=t\}=
\{e_i\otimes f_j\,|\,\lambda_{\mathcal F\otimes\mathcal
G}(e_i\otimes f_j)=t\}\] which sends $(e_i,f_j)$ to $e_i\otimes f_j$
induces an isomorphism between the $K$-vector spaces
$\bigoplus_{a+b=t}\mathrm{sq}_{\mathcal
F}^a(E)\otimes\mathrm{sq}_{\mathcal G}^b(F)$ and
$\mathrm{sq}_{\mathcal F\otimes\mathcal G}^t(E\otimes F)$.

The equality \eqref{Equ:expect} is a direct consequence of the
relation
\[\rang(\mathrm{sq}^t_{\mathcal F\otimes\mathcal G}(E\otimes F))=\sum_{a+b=t}\rang(\mathrm{sq}^a_{\mathcal F}(E))\rang(\mathrm{sq}^b_{\mathcal G}(F))\]
and the definition of the expectation \eqref{Equ:exp}.
\end{proof}
From the proof of the previous proposition, we observe that, if
$\mathbf{e}$ (resp. $\mathbf{f}$) is a basis of $E$ (resp. $F$)
which is compatible with the filtration $\mathcal F$ (resp.
$\mathcal G$), then $\mathbf{e}\otimes\mathbf{f}$ is a basis of
$E\otimes F$ which is compatible with the tensor product filtration
$\mathcal F\otimes\mathcal G$. From the metric point of view, this
means that the norm on $E\otimes F$ corresponding to the tensor
product filtration $\mathcal F\otimes\mathcal G$ is the tensor
product norm.

\subsubsection{Exterior product} Let $E$ be a finite dimensional vector space over $K$ and $n\geqslant 1$ be an integer. Denote by $\pi_n:E^{\otimes n}\rightarrow\Lambda^nE$ the natural projection. If $\mathcal F$ is a filtration of $E$, we denote by $\Lambda^n\mathcal F$ the quotient filtration $\pi_*(\mathcal F^{\otimes n})$, called the \emph{exterior product filtration}. If $\mathbf{e}=(e_i)_{i=1}^r$ is a basis of $E$ which is compatible with the filtration $\mathcal F$, then $(e_{i_1}\wedge\cdots\wedge e_{i_n})_{1\leqslant i_1<\ldots<i_n\leqslant r}$ is a basis of $\Lambda^nE$ which is compatible with $\Lambda^n\mathcal F$. Moreover, one has
\[\lambda_{\Lambda^n\mathcal F}(e_{i_1}\wedge\cdots\wedge e_{i_n})=\lambda_{\mathcal F}(e_{i_1})+\cdots+\lambda_{\mathcal F}(e_{i_n}).\]
In particular, one has
\[\binom{r}{n}\mathbb E[\Lambda^n\mathcal F]=\sum_{1\leqslant i_1<\ldots<i_n\leqslant r}\sum_{j=1}^n\lambda_{\mathcal F}(e_{i_j})=\binom{r-1}{n-1}\sum_{i=1}^r\lambda_{\mathcal F}(e_i)=r\binom{r-1}{n-1}\mathbb E[\mathcal F],\]
where the second equality can be proved by induction on $n$. Hence
we obtain
\begin{equation}\label{Equ:explamabda}
\mathbb E[\Lambda^n\mathcal F]=n\mathbb E[\mathcal F].
\end{equation}

\subsubsection{Direct sum}
Let $E$ and $F$ be two finite dimensional vector spaces over ${K}$.
Suppose given a filtration $\mathcal F$ of $E$ and a filtration
$\mathcal G$ of $F$. We define the {\it direct sum} of $\mathcal F$
and $\mathcal G$ as the filtration $\mathcal F\oplus\mathcal G$ of
$E\oplus F$ such that
\[(\mathcal F\oplus\mathcal G)^t(E\oplus F)=\mathcal F^tE\oplus\mathcal G^tF.\]
One has
\[(\rang(E)+\rang(F))\mathbb E[\mathcal F\oplus\mathcal G]=\rang(E)\mathbb E[\mathcal F]+\rang(F)\mathbb E[
\mathcal G].\]

\subsubsection{Refinement}\label{Subsubsec:refinement}
Let $W$ be a finite dimensional vector space over ${K}$ which is
non-zero. Let $\mathcal F$ be a filtration of $W$. Suppose given,
for any $t\in\mathbb R$, a filtration $\mathcal G_{(t)}$ of
$\mathrm{sq}_{\mathcal F}^t(W)$. Let $\widetilde{\mathbf{e}}_{t}$ be
a basis of $\mathrm{sq}_{\mathcal F}^t(W)$ which is compatible with
the filtration $\mathcal G_{(t)}$. The vector family
$\widetilde{\mathbf{e}}_{t}$ gives rise to a linearly independent
family $\mathbf{e}_t$ in $\mathcal F^t(W)$. It turns out that
$\mathbf{e}:=\bigcup_{t\in\mathbb R}\mathbf{e}_t$  is a basis of
$W$. For any $t\in\mathbb R$ and any $x\in\mathbf{e}_t$, let
$\varphi(x)=\lambda_{\mathcal G_{(t)}}(\bar{x})$, where $\bar{x}$
denotes the class of $x$ in $\mathrm{sq}_{\mathcal F}^t(W)$. Let
$\mathcal G$ be the filtration associated to the basis $\mathbf{e}$
and the function $\varphi:\mathbf{e}\rightarrow\mathbb R$. By
definition one has
\begin{equation}\label{Equ:EG}\mathbb E[\mathcal G]=\frac{1}{\rang(W)}\sum_{t\in\mathbb R}\sum_{x\in\mathbf{e}_t}\varphi(x)=\sum_{t\in\mathbb R}\frac{\rang(\mathrm{sq}_{\mathcal F}^t(W))}{\rang(W)}\mathbb E[\mathcal G_{(t)}].\end{equation}
Moreover, the basis $\mathbf{e}$ is simultaneously compatible with
the filtrations $\mathcal F$ and $\mathcal G$. In particular, one
has
\begin{equation}\label{Equ:innerFG}\langle\mathcal F,\mathcal G\rangle=\frac{1}{\rang(W)}
\sum_{t\in\mathbb R}t\sum_{x\in\mathbf{e}_t}\varphi(x)
=\sum_{t\in\mathbb R}\frac{\rang(\mathrm{sq}_{\mathcal
F}^t(W))}{\rang(W)}t\mathbb E[\mathcal G_{(t)}].\end{equation}

\subsubsection{Translation and dilation}

Let $W$ be a finite dimensional vector space over $K$ and $\mathcal
F$ be a filtration of $W$. For any real number $a$, let
$\tau_a\mathcal F$ be the filtration of $W$ such that
\[(\tau_a\mathcal F)^tW=\mathcal F^{a+t}(W).\]
One has $\mathbb E[\tau_a\mathcal F]=\mathbb E[\mathcal F]+a$. For
any positive real number $\varepsilon$, let $\varepsilon\mathcal F$
be the filtration of $W$ such that
\[(\varepsilon\mathcal F)^tW=\mathcal F^{\varepsilon t}W.\]
One has $\mathbb E[\varepsilon\mathcal F]=\varepsilon\mathbb
E[\mathcal F]$. Moreover, the following relation between the
translation and the dilation holds. For all $a\in\mathbb R$ and
$\varepsilon>0$, one has
\begin{equation}\label{Equ:taua}\tau_{a\varepsilon}(\varepsilon\mathcal F)=\varepsilon(\tau_a\mathcal F).\end{equation}
If $V$ is a vector subspace of $W$, one has
\begin{equation}\label{Equ:tauv}\tau_a(\mathcal F|_V)=(\tau_a\mathcal F)|_V\quad\text{and}\quad
\varepsilon(\mathcal F|_V)=(\varepsilon\mathcal F)|_V.\end{equation}

Let $E$ and $F$ be two finite dimensional vector spaces
over $K$, equipped with filtrations $\mathcal F$ and $\mathcal G$
respectively. For any real number $a$ one has
\begin{equation}\label{Equ:tauprten}\tau_a(\mathcal F\oplus\mathcal G)=(\tau_a\mathcal F)\oplus(\tau_a\mathcal G),\quad
\tau_a(\mathcal F\otimes\mathcal G)=(\tau_a\mathcal
F)\otimes\mathcal G=\mathcal F\otimes(\tau_a\mathcal
G).\end{equation} For any $\varepsilon>0$, one has
\begin{equation}\label{Equ:varepsion}\varepsilon(\mathcal F\oplus\mathcal G)=(\varepsilon\mathcal F)\oplus(\varepsilon\mathcal G),\quad
\varepsilon(\mathcal F\otimes\mathcal G)=(\varepsilon\mathcal
F)\otimes(\varepsilon\mathcal G).\end{equation}

Let $W$ be a finite dimensional vector space over $K$, and
$\mathcal F$ and $\mathcal G$ be two filtrations of $W$. For
any real number $a$ and any $\varepsilon>0$, one has
\begin{gather}
\label{Equ:correl}Z_{\tau_a\mathcal F}=Z_{\mathcal F}+a,\quad\langle\tau_a\mathcal F,\mathcal G\rangle=\langle\mathcal F,\mathcal G\rangle+a\mathbb E[\mathcal G],\\
\label{Equ:epscoro} Z_{\varepsilon\mathcal F}=\varepsilon
Z_{\mathcal F},\quad\langle\varepsilon\mathcal F,\mathcal
G\rangle=\varepsilon\langle\mathcal F,\mathcal G\rangle.
\end{gather}

\subsubsection{Dual filtration}

Let $W$ be a finite dimensional vector space over $K$ and $\mathcal
F$ be a filtration of $W$. We define a filtration $\mathcal F^\vee$
of the dual space $W^\vee$ as follows. For any $t\in\mathbb R$,
\[(\mathcal F^\vee)^tW^\vee:=(\mathcal F^{-t}W)^{\perp},\]
where for any vector subspace $V$ of $W$, $V^\perp$ denotes the
subset of $W^\vee$ of linear forms $\varphi$ such that
$\varphi|_{V}$ is zero. By definition, for any element
$\varphi\in W^\vee$, one has
\[\lambda_{\mathcal F^\vee}(\varphi)=-\inf_{\begin{subarray}{c}x\in W\\\varphi(x)\neq 0\end{subarray}} \lambda_{\mathcal F}(x).\]
Moreover, one has $\mathbb E[\mathcal F^\vee]=-\mathbb E[\mathcal
F]$.

\subsection{An expectation inequality}

Let $E$ and $F$ be two non-zero finite dimensional vector space over
$K$, and $\mathcal F$ and $\mathcal G$ be respectively filtrations
of $E$ and $F$. Let $\mathbf{e}=(e_i)_{i=1}^n$ and
$\mathbf{f}=(f_j)_{j=1}^m$ be bases of $E$ and $F$ which are
respectively compatible with filtrations $\mathcal F$ and $\mathcal
G$. By virtue of the proof of Proposition \ref{Pro:subquotient tp},
the set
\[\mathbf{e}\otimes\mathbf{f}=\{e_i\otimes f_j\,|\,1\leqslant i\leqslant n,\;1\leqslant j\leqslant m\}\]
is a basis of $E\otimes F$ which is compatible with $\mathcal
F\otimes\mathcal G$. Consider a vector subspace $V$ of $E\otimes F$
which is of dimension one. Let $\varphi$ be a non-zero element in
$V$, which is written as
\[\varphi=\sum_{i,j}a_{ij}e_i\otimes f_j,\]
where $a_{ij}$ are elements in $K$. By \eqref{Equ:lambdaFx}, one has
\[\mathbb E[(\mathcal F\otimes\mathcal G)|_V]=\lambda_{\mathcal F\otimes\mathcal G}(\varphi)=
\min\{\lambda_{\mathcal F}(e_i)+\lambda_{\mathcal
G}(f_j)\,|\,a_{ij}\neq 0\}.\] Note that the tensor vector $\varphi$
corresponds to a $K$-linear map from $E^\vee$ to $F$, which we
denote by $T_{\varphi}$.

\begin{prop}\label{Pro:semist}
With the notation above, if $T_\varphi$ is an isomorphism of
$K$-vector spaces, then the following inequality holds
\begin{equation}\label{Equ:semistabilit}
\mathbb E[(\mathcal F\otimes\mathcal G)|_V]\leqslant\mathbb
E[\mathcal F]+\mathbb E[\mathcal G].
\end{equation}
\end{prop}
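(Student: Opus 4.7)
The plan is to expand $\varphi$ in the basis $\mathbf{e}\otimes\mathbf{f}$ and exploit that the isomorphism hypothesis on $T_\varphi$ is equivalent to the non-vanishing of $\det(a_{ij})$, which as a sum over the symmetric group forces at least one ``diagonal'' of coefficients to be entirely non-zero.

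Set $n:=\rang(E)=\rang(F)$ (this equality follows from $T_\varphi$ being an isomorphism), and write $\alpha_i:=\lambda_{\mathcal F}(e_i)$, $\beta_j:=\lambda_{\mathcal G}(f_j)$. Since $\mathbf{e}$ and $\mathbf{f}$ are compatible with $\mathcal F$ and $\mathcal G$ respectively, one has
\[\mathbb E[\mathcal F]=\frac{1}{n}\sum_{i=1}^n\alpha_i,\qquad \mathbb E[\mathcal G]=\frac{1}{n}\sum_{j=1}^n\beta_j,\]
and by the discussion preceding the statement (see formula \eqref{Equ:lambdaFx} applied to the basis $\mathbf{e}\otimes\mathbf{f}$ of $E\otimes F$),
\[\mathbb E[(\mathcal F\otimes\mathcal G)|_V]=\lambda_{\mathcal F\otimes\mathcal G}(\varphi)=\min\{\alpha_i+\beta_j\,:\,a_{ij}\neq 0\}.\]

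The key step is the following observation: under the identification $E\otimes F\simeq\Hom_K(E^\vee,F)$, the matrix of $T_\varphi$ in the bases $\mathbf{e}^\vee$ and $\mathbf{f}$ is exactly $(a_{ij})$. Since $T_\varphi$ is an isomorphism, $\det(a_{ij})\neq 0$; expanding
\[\det(a_{ij})=\sum_{\sigma\in\mathfrak S_n}\sgn(\sigma)\prod_{i=1}^na_{i,\sigma(i)},\]
at least one term must be non-zero, so there exists $\sigma\in\mathfrak S_n$ with $a_{i,\sigma(i)}\neq 0$ for every $i\in\{1,\ldots,n\}$. For this permutation, each pair $(i,\sigma(i))$ satisfies
\[\alpha_i+\beta_{\sigma(i)}\geqslant\min\{\alpha_k+\beta_l\,:\,a_{kl}\neq 0\}=\mathbb E[(\mathcal F\otimes\mathcal G)|_V].\]
Summing over $i$ and using that $\sigma$ is a bijection of $\{1,\ldots,n\}$,
\[\sum_{i=1}^n\alpha_i+\sum_{j=1}^n\beta_j=\sum_{i=1}^n\bigl(\alpha_i+\beta_{\sigma(i)}\bigr)\geqslant n\cdot\mathbb E[(\mathcal F\otimes\mathcal G)|_V],\]
which, after dividing by $n$, is precisely the desired inequality \eqref{Equ:semistabilit}.

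There is no real obstacle here: the whole content of the statement is the permanent-style expansion of the determinant, and the assumption of invertibility of $T_\varphi$ is used exactly once, to guarantee the existence of a non-vanishing diagonal. The proof does not use the particular field $K$ nor any metric structure, only the combinatorics of the expansion and the additivity of exponents in the tensor product filtration (as recorded in Proposition \ref{Pro:subquotient tp}). One could alternatively phrase the same argument by applying $\Lambda^n$: the image of $\varphi^{\otimes n}$ in $\Lambda^nE\otimes\Lambda^nF$ equals (up to the factor $n!$) $\det T_\varphi\neq 0$, and \eqref{Equ:explamabda} combined with the fact that $\lambda_{\mathcal F\otimes\mathcal G}(\varphi^{\otimes n})=n\lambda_{\mathcal F\otimes\mathcal G}(\varphi)$ yields the same bound; but the permutation argument is the shortest and most transparent.
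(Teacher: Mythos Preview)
Your proof is correct. The approach is close in spirit to the paper's but packaged differently: the paper takes the dual basis $(e_i^\vee)$, observes that $\lambda_{\mathcal G}(T_\varphi(e_i^\vee))=\min\{\beta_j:a_{ij}\neq 0\}$, and then invokes the general inequality \eqref{Equ:ineqliat} (valid for \emph{any} basis of $F$, compatible or not) to bound $\frac{1}{n}\sum_i\lambda_{\mathcal G}(T_\varphi(e_i^\vee))\leqslant\mathbb E[\mathcal G]$. You instead extract a single permutation $\sigma$ from the nonvanishing of $\det(a_{ij})$, which makes the sum over the second index collapse exactly to $\sum_j\beta_j$ without appeal to \eqref{Equ:ineqliat}. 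Your route is slightly more self-contained and makes the role of invertibility more transparent; the paper's route has the mild advantage of isolating a reusable lemma about arbitrary bases. Either way the content is the same averaging argument.
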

\begin{proof} Let $\mathbf{e}=(e_i)_{i=1}^n$ be a basis of $W$ of $E$ which is compatible with the filtration $\mathcal F$.
Denote by $\mathbf{e}^\vee=(e_i^\vee)_{i=1}^n$ the dual basis of
$\mathbf{e}$. For any $i\in\{1,\ldots,n\}$, one has
\begin{equation}\label{Equ:lambdai}\lambda_{\mathcal F}(e_i)+\lambda_{\mathcal G}(T_{\varphi}(e_i^\vee))=
\lambda_{\mathcal F}(e_i)+\min\{\lambda_{\mathcal
G}(f_j)\,|\,a_{ij}\neq 0\}\geqslant \lambda_{\mathcal
F\otimes\mathcal G}(\varphi).\end{equation} Since $\mathbf{e}$ is a
basis of $E$ which is compatible with the filtration $\mathcal F$,
one has
\[\frac{1}{n}\sum_{i=1}^n\lambda_{\mathcal F}(e_i)=\mathbb E[\mathcal F].\]
Moreover, since $T_{\varphi}$ is an isomorphism,
$(T_{\varphi}(e_i^\vee))_{i=1}^n$ is a basis of $F$. By
\eqref{Equ:ineqliat} one has
\[\frac{1}{n}\sum_{i=1}^n\lambda_{\mathcal G}(T_{\varphi}(e_i^\vee))\leqslant\mathbb E[\mathcal G].\]
By taking the average of \eqref{Equ:lambdai} with respect to
$i\in\{1,\ldots,n\}$, one obtains \eqref{Equ:semistabilit}.
\end{proof}

\section{Geometric semistability} In this section, we introduce several (semi)stability conditions for vector subspaces in the tensor product of two vector spaces, and discuss their properties.
\subsection{Conditions of semistability}
Let $E$ and $F$ be two finite dimensional vector spaces and $V$ be a
non-zero vector subspace of $E\otimes F$.
\begin{enumerate}[1)]
\item We say that $V$ is \emph{left  semistable} (resp. \emph{left  stable}) if for any non-degenerated filtration $\mathcal F$ of $E$, one has
\[\mathbb E[(\mathcal F\otimes\mathcal G_0)|_V]\leqslant\mathbb E[\mathcal F]\quad (\text{resp. }\mathbb E[(\mathcal F\otimes\mathcal G_0)|_V]<\mathbb E[\mathcal F]),\]
where $\mathcal G_0$ denotes the trivial filtration of $F$.
\item We say that
$V$ is \emph{right  semistable} (resp. \emph{right stable}) if for
any non-degenerated filtration $\mathcal G$ of $F$, one has
\[\mathbb E[(\mathcal F_0\otimes\mathcal G)|_V]\leqslant\mathbb E[\mathcal G]\quad (\text{resp. }\mathbb E[(\mathcal F_0\otimes\mathcal G)|_V]<\mathbb E[\mathcal G]),\]
where $\mathcal F_0$ denotes the trivial filtration of $E$.
\item We say that $V$ is \emph{both-sided  semistable} (resp. \emph{both-sided  stable}) if for any filtration $\mathcal F$ of $E$ and any filtration $\mathcal G$ of $F$ such that at least one filtration between them is non-degenerated, one has
    \[\mathbb E[(\mathcal F\otimes\mathcal G)|_V]\leqslant\mathbb E[\mathcal F]+\mathbb E[\mathcal G]\quad (\text{resp. }\mathbb E[(\mathcal F\otimes\mathcal G)|_V]<\mathbb E[\mathcal F]+\mathbb E[\mathcal G]),\]
\end{enumerate}
Remind that a filtration is said to be non-degenerated if its
associated random variable does not reduce to a constant function.

\begin{rema}
Denote by $\mathbb P(V,\mathcal F,\mathcal G)$ the relation $\mathbb
E[(\mathcal F\otimes\mathcal G)|_V]\leqslant\mathbb E[\mathcal
F]+\mathbb E[\mathcal G]$. By \eqref{Equ:taua} and
\eqref{Equ:tauprten}, for any $a\in\mathbb R$, one has
\begin{equation}\mathbb P(V,\mathcal F,\mathcal G)\Longleftrightarrow\mathbb P(V,\tau_a\mathcal F,\mathcal G)\Longleftrightarrow\mathbb P(V,\mathcal F,\tau_a\mathcal G)
\end{equation}
Therefore, $V$ is both-sided semistable if and only if, for any
filtration $\mathcal F$ of $E$ and any filtration $\mathcal G$ of
$F$ satisfying $\mathbb E[\mathcal F]=\mathbb E[\mathcal G]=0$, one
has $\mathbb E[(\mathcal F\otimes\mathcal G)|_V]\leqslant 0$. Note
that the inclusion map of $V$ into $E\otimes F$ defines a rational
point $x$ of the projective space $\mathbb
P(\Lambda^{\rang(V)}(E\otimes F)^\vee)$ (with the convention of
Grothendieck). The argument above shows that the vector subspace $V$
is both-sided semistable if and only if the point $x$ is semistable
in the sense of the geometric invariant theory under the action of
$\mathrm{SL}(E)\times\mathrm{SL}(F)$ (with respect to the universal
line bundle).
\end{rema}

\begin{prop}\label{Pro:semistable}
Let $E$ and $F$ be two finite dimensional vector spaces over ${K}$,
and $V$ be a non-zero vector subspace of $E\otimes F$. Then $V$ is
left  semistable (resp. left stable) if and only if, for any vector
subspace $E_1$ of $E$ such that $0\subsetneq E_1\subsetneq E$, one
has
\begin{equation}\label{Equ:stable}\frac{\rang(V\cap(E_1\otimes F))}{\rang(V)}\leqslant\frac{\rang(E_1)}{\rang(E)}\qquad
(\text{resp. }\frac{\rang(V\cap(E_1\otimes
F))}{\rang(V)}<\frac{\rang(E_1)}{\rang(E)}).\end{equation} A similar
result also holds for the right (semi)stability.
\end{prop}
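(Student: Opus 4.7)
The plan is based on recognizing that the definition of left (semi)stability, when specialized against the trivial filtration of $F$, reduces by Abel summation to a system of inequalities on ranks indexed by the intermediate subspaces of any flag of $E$. First, I would recall (from \S\ref{SubSec:weighted}) that a non-degenerate filtration $\mathcal{F}$ of $E$ corresponds to a strictly increasing flag $0=E_0\subsetneq E_1\subsetneq\cdots\subsetneq E_n=E$ with $n\geqslant 2$, equipped with a strictly decreasing sequence of jumps $a_1>\cdots>a_n$. Since the trivial filtration $\mathcal{G}_0$ of $F$ satisfies $\mathcal{G}_0^b F=F$ for $b\leqslant 0$ and $\mathcal{G}_0^bF=0$ otherwise, the tensor product filtration collapses to
\[(\mathcal{F}\otimes\mathcal{G}_0)^t(E\otimes F)=\mathcal{F}^tE\otimes F,\]
so its restriction to $V$ has jumps contained in $\{a_1,\ldots,a_n\}$ and is governed by the subspaces $V_i:=V\cap(E_i\otimes F)$.

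Next I would compute, using the weighted-flag form of the expectation,
\[\mathbb{E}[\mathcal{F}]=\frac{1}{\rang E}\sum_{i=1}^n a_i\rang(E_i/E_{i-1}),\qquad \mathbb{E}[(\mathcal{F}\otimes\mathcal{G}_0)|_V]=\frac{1}{\rang V}\sum_{i=1}^n a_i\rang(V_i/V_{i-1}),\]
and rewrite each quantity by Abel summation. Since $\sum_i\rang(E_i/E_{i-1})/\rang E=\sum_i\rang(V_i/V_{i-1})/\rang V=1$, both sums take the form
\[a_n+\sum_{i=1}^{n-1}(a_i-a_{i+1})\,\tfrac{\rang(\cdot)_i}{\rang(\cdot)}.\]
Because the differences $a_i-a_{i+1}$ are arbitrary positive reals (subject only to the strictly decreasing condition), the inequality $\mathbb{E}[(\mathcal{F}\otimes\mathcal{G}_0)|_V]\leqslant\mathbb{E}[\mathcal{F}]$ holds for \emph{every} non-degenerate $\mathcal{F}$ if and only if
\[\frac{\rang V_i}{\rang V}\leqslant\frac{\rang E_i}{\rang E}\qquad(i=1,\ldots,n-1)\]
for every intermediate subspace of every flag, which is precisely condition (\ref{Equ:stable}).

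To make this equivalence precise, the forward direction is handled by specializing to $n=2$: given a proper nonzero $E_1\subset E$, I would take the flag $0\subsetneq E_1\subsetneq E$ with any two jumps $a_1>a_2$ (e.g.\ $a_1=1$, $a_2=0$); the resulting filtration is non-degenerate, and the defining inequality of left semistability reads exactly (\ref{Equ:stable}) for $E_1$. The converse, as explained above, is the Abel-summation reduction applied in the opposite direction. The left stable case is strictly parallel, using strict inequalities at every step and noting that any non-degenerate flag contributes at least one intermediate subspace to which the strict version of (\ref{Equ:stable}) applies, so that at least one of the terms $(a_i-a_{i+1})(\rang E_i/\rang E-\rang V_i/\rang V)$ is strictly positive. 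Finally, the right (semi)stability statement follows by exchanging the roles of $E$ and $F$ (equivalently, replacing $E\otimes F$ by $F\otimes E$ via the canonical isomorphism).

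I do not foresee any serious obstacle: the argument is essentially a formal manipulation once one observes that tensoring with the trivial filtration preserves the flag structure of the other factor, so the expectation inequality becomes affine in the increments $a_i-a_{i+1}$. The only points requiring some care are matching the normalizations of the two expectations and verifying that the constant term $a_n$ cancels on both sides after Abel summation, so that the comparison reduces cleanly to the ranks of the $V_i$ and $E_i$.
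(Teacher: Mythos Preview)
Your proposal is correct and takes essentially the same approach as the paper: the forward direction specializes to the two-step flag $0\subsetneq E_1\subsetneq E$ with jumps $1>0$, and the converse compares the two expectations termwise via Abel summation. The paper phrases the converse using the integral formula $\mathbb{E}[\mathcal{F}]=a+\int_a^{+\infty}\rang(\mathcal{F}^tE)/\rang(E)\,\mathrm{d}t$ (and the analogous formula for $(\mathcal{F}\otimes\mathcal{G}_0)|_V$), which for these step functions is precisely your discrete Abel summation $a_n+\sum_{i=1}^{n-1}(a_i-a_{i+1})\rang(E_i)/\rang(E)$; the strict case is handled identically.
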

\begin{proof}
``$\Longrightarrow$'': Let $\mathcal F$ be the filtration of $E$
corresponding to the flag $0\subsetneq E_1\subsetneq E$ together
with the jump points $\{1,0\}$, and let $\mathcal G_0$ be the
trivial filtration of $F$. One has
\[\mathbb E[(\mathcal F\otimes\mathcal G_0)|_V]=\frac{\rang(V\cap(E_1\otimes F))}{\rang(V)}\qquad\text{and}\qquad\mathbb E[\mathcal F]=\frac{\rang(E_1)}{\rang(E)}.\]
Therefore, if $V$ is left (semi)stable, then the inequality
\eqref{Equ:stable} holds.

``$\Longleftarrow$'': Assume that the inequality \eqref{Equ:stable}
holds for any vector subspace $E_1$ of $E$ such that $0\subsetneq
E_1\subsetneq E$. Let $\mathcal F$ be a filtration of $E$. One has,
for sufficient negative number $a$,
\[\mathbb E[(\mathcal F\otimes\mathcal G_0)|_V]=a+\int_{a}^{+\infty}
\frac{\rang(V\cap(\mathcal F^tE\otimes
F))}{\rang(V)}\,\mathrm{d}t\leqslant a+\int_a^{+\infty}
\frac{\rang(\mathcal F^tE)}{\rang(E)}\,\mathrm{d}t=\mathbb
E[\mathcal F].\] The inequality is strict when $V$ is left stable
since there then exists an interval $I$ of positive length such that
$0\subsetneq \mathcal F^tE\subsetneq E$ for any $t\in I$.
\end{proof}

\begin{rema}
We observe from the above definition that, if $V$ is both-sided
(semi)stable, then it is left and right (semi)stable. However, the
converse is not true in general. The following is a counter-example.
Let $E$ and $F$ be two vector spaces of dimension $3$ over ${K}$.
Let $(x_1,x_2,x_3)$ and $(y_1,y_2,y_3)$ be respectively a basis of
$E$ and of $F$. Denote by $E_1$ (resp. $F_1$) the vector subspace of
$E$ (resp. $F$) generated by $x_1$ (resp. $y_1$), and let $V$ be the
two-dimensional vector subspace of $E\otimes F$ generated by
$x_1\otimes y_2+x_2\otimes y_1$ and $x_1\otimes y_3+x_3\otimes y_1$.
Consider the filtration $\mathcal F$ of $E$ corresponding to the
flag $0\subsetneq E_1\subsetneq E$ and the jump points $\{1,0\}$,
and the filtration $\mathcal G$ of $F$ corresponding to $0\subsetneq
F_1\subsetneq F$ and $\{1,0\}$. By definition, the filtration
$\mathcal F\otimes\mathcal G$ corresponds to $0\subsetneq E_1\otimes
F_1\subsetneq E_1\otimes F+E\otimes F_1\subsetneq E\otimes F$ and
$\{2,1,0\}$. Therefore $(\mathcal F\otimes\mathcal G)|_V$ is the
filtration with only one jump point at $1$. Hence
\[\mathbb E[(\mathcal F\otimes\mathcal G)|_V]=1>\mathbb E[\mathcal F]+\mathbb E[\mathcal G]=2/3,\]
which shows that $V$ is not both-sided semistable. However, for any
subspace $E'$ of $E$ such that $0\subsetneq E'\subsetneq E$, if
$\rang(E')=1$, then $V\cap(E'\otimes F)=0$; if $\rang(E')=2$, then
$\rang(V\cap(E'\otimes F))\leqslant 1$. Therefore, we always have
\[\frac{\rang(V\cap(E'\otimes F))}{\rang(V)}\leqslant\frac{\rang(E')}{\rang(E)}.\]
By Proposition \ref{Pro:semistable}, we obtain that $V$ is left
stable. Similarly we can verify that it is also right stable.
\end{rema}

The following propositions describe the link between the geometric
(semi)stability and the tensorial rank of lines in the tensor product.

\begin{prop}
Let $E$ and $F$ be two finite dimensional vector spaces over ${K}$.
Let $n$ and $m$ be respectively the rank of $E$ and $F$. Let $V$ be
a one-dimensional vector subspace of $E\otimes F$. Denote by
$\rho(V)$ the tensorial rank of $V$, namely the rank of an arbitrary
non-zero vector in $V$, considered as a $K$-linear map from $E^\vee$
to $F$.
\begin{enumerate}[1)]
\item The following conditions are equivalent~:
\begin{enumerate}[i)]
\item $V$ is left (resp. right) stable,
\item $V$ is left (resp. right) semistable,
\item $\rho(V)=n$ (resp. $\rho(V)=m$).
\end{enumerate}
\item If $V$ is left and right semistable, then it is both-sided semistable.
\end{enumerate}
\end{prop}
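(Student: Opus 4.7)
The plan is to reduce everything to the observation that, for a one-dimensional $V = K\alpha \subset E\otimes F$, the tensorial rank $\rho(V)$ equals the rank of $\alpha$ regarded as a linear map $T_\alpha: E^\vee \to F$, and also equals the dimension of the smallest subspace $E_\alpha \subset E$ for which $\alpha \in E_\alpha \otimes F$ (and symmetrically for $F_\alpha \subset F$).

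For part 1), by the symmetry between $E$ and $F$, it is enough to handle the ``left'' case. Using Proposition \ref{Pro:semistable}, since $V$ is one-dimensional, $V \cap (E_1 \otimes F)$ is either $0$ or $V$ for every subspace $E_1$ of $E$, and it equals $V$ precisely when $E_\alpha \subset E_1$. One then checks the three implications:
\begin{enumerate}[i)]
\item If $\rho(V) = n$, then $E_\alpha = E$, so there is no proper subspace $E_1 \subsetneq E$ with $V \cap (E_1 \otimes F) = V$; hence for every $0 \subsetneq E_1 \subsetneq E$, $\rk(V\cap(E_1\otimes F))/\rk(V) = 0 < \rk(E_1)/n$ strictly, so $V$ is left stable.
\item Left stable trivially implies left semistable.
\item If $V$ is left semistable but $\rho(V) < n$, take $E_1 = E_\alpha$; then $V \subset E_1 \otimes F$ gives $1 \leqslant \rho(V)/n < 1$, a contradiction.
\end{enumerate}

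For part 2), assume $V$ is both left and right semistable. By part 1) this means $\rho(V) = n = m$, so $T_\alpha: E^\vee \to F$ is an isomorphism of $K$-vector spaces. Then Proposition \ref{Pro:semist} applies to any filtrations $\mathcal F$ of $E$ and $\mathcal G$ of $F$ and yields
\[\mathbb E[(\mathcal F \otimes \mathcal G)|_V] \leqslant \mathbb E[\mathcal F] + \mathbb E[\mathcal G],\]
which is exactly the both-sided semistability of $V$.

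No step looks subtle: once one uses the equivalence between $\rho(V)$ and the minimal $E_1$ with $\alpha \in E_1 \otimes F$, the combinatorics of Proposition \ref{Pro:semistable} for one-dimensional $V$ collapse to the dichotomy ``$V \subset E_1 \otimes F$ or $V \cap (E_1\otimes F) = 0$,'' and Proposition \ref{Pro:semist} absorbs all the tensor-filtration bookkeeping in part 2). The only mild point to record is that the ``at least one non-degenerate'' proviso in the definition of both-sided semistability is automatic: when both $\mathcal F$ and $\mathcal G$ are degenerate, the inequality in question is a trivial equality of constants.
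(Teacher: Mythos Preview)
Your proof is correct and follows essentially the same route as the paper: the same dichotomy $V\cap(E_1\otimes F)\in\{0,V\}$ combined with Proposition~\ref{Pro:semistable} for part~1), and the reduction to Proposition~\ref{Pro:semist} via $\rho(V)=n=m$ for part~2). Your closing remark on the degenerate case is a harmless clarification not present in the paper's version.
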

\begin{proof}
1) The implication ``i)$\Rightarrow$ii)'' is trivial.

``ii)$\Rightarrow$iii)'': let $E_1$ be the image of $V$ in $E$,
namely $E_1$ is the smallest vector subspace of $E$ such that
$V\subset E_1\otimes F$. If $\rho(V)<n$, then $0\subsetneq
E_1\subsetneq E$. One has
\[\frac{\rang(V\cap(E_1\otimes F))}{\rang(V)}=1>\frac{\rang(E_1)}{\rang(E)}.\]
By Proposition \ref{Pro:semistable}, $V$ is not left semistable.

``iii)$\Rightarrow$i)'': Assume that $\rho(V)=n$. For any vector
subspace $E_1$ of $E$ such that $0\subsetneq E_1\subsetneq E$, one
has $V\cap(E_1\otimes F)=0$. Hence
\[\frac{\rang(V\cap(E_1\otimes F))}{\rang(V)}=0<\frac{\rang(E_1)}{\rang(E)}.\]
Again by Proposition \ref{Pro:semistable}, we obtain that $V$ is
left stable.

2) Let $\varphi$ be a non-zero element in $V$. If $V$ is left and
right semistable, then $\varphi$ is an isomorphism (as a $K$-linear
map from $E^\vee$ to $F$). By Proposition \ref{Pro:semist}, $V$ is
both-sided semistable.
\end{proof}

\begin{rema}
Note that even in the case where $V$ is one-dimensional, the
both-sided semistability is in general \emph{not} equivalent to the
both-sided stability. Consider the following example. Let $E$ and
$F$ be vector spaces of rank two over ${K}$, and let $(x_1,x_2)$ and
$(y_1,y_2)$ be respectively a basis of $E$ and $F$. Consider the
subspace $V$ of $E\otimes F$ generated by $x_1\otimes y_1+x_2\otimes
y_2$. Let $\mathcal F$ be the filtration of $E$ corresponding to the
flag $0\subsetneq E_1\subsetneq E$ together with the jump set
$\{1,0\}$, where $E_1$ is the vector subspace of $E$ generated by
$x_1$. Let $\mathcal G$ be the filtration of $F$ corresponding to
the flag $0\subsetneq F_1\subsetneq F$ together with the jump set
$\{1,0\}$, where $F_1$ is the vector subspace of $F$ generated by
$y_1$. The filtration $(\mathcal F\otimes\mathcal G)|_V$ has only
one jump at $1$ since $V$ is contained in $E_1\otimes F+E\otimes
F_1$. Hence $\mathbb E[(\mathcal F\otimes\mathcal G)|_V]= 1=\mathbb
E[\mathcal F]+\mathbb E[\mathcal G]$.
\end{rema}

\begin{prop}
Let $E$ and $F$ be two finite dimensional vector spaces over $K$ and
$V$ be a non-zero vector subspace of $E\otimes F$. If $V$ is left
semistable, then the image of $V$ in $E$ is equal to $E$. Similarly,
if $V$ is right semistable, then the image of $V$ in $F$ is equal to
$F$.
\end{prop}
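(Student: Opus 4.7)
The plan is to reduce to Proposition \ref{Pro:semistable} and obtain a contradiction from the defining inclusion. Let $E_1 \subseteq E$ denote the image of $V$ in $E$, that is, the smallest vector subspace of $E$ such that $V \subseteq E_1 \otimes F$. Equivalently, viewing elements of $V$ as $K$-linear maps $F^\vee \to E$, $E_1$ is the sum of their images. My goal is to show $E_1 = E$ under the assumption that $V$ is left semistable, and the symmetric statement follows by swapping the roles of $E$ and $F$.

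First I would observe that $E_1 \neq 0$. Indeed, $V \neq 0$ contains some nonzero tensor $v \in E \otimes F$, and any nonzero tensor admits a linear form $\phi \in F^\vee$ with $(\mathrm{id}_E \otimes \phi)(v) \neq 0$; this element lies in $E_1$.

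Now suppose, for contradiction, that $E_1 \subsetneq E$. Then we have $0 \subsetneq E_1 \subsetneq E$, so $E_1$ is an admissible test subspace for the left semistability criterion of Proposition \ref{Pro:semistable}. By definition of $E_1$, we have $V \subseteq E_1 \otimes F$, hence $V \cap (E_1 \otimes F) = V$, so
\[\frac{\rang(V \cap (E_1 \otimes F))}{\rang(V)} = 1 > \frac{\rang(E_1)}{\rang(E)},\]
the strict inequality coming from $\rang(E_1) < \rang(E)$. This contradicts the left semistability of $V$ as characterized by Proposition \ref{Pro:semistable}, so we must have $E_1 = E$.

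The statement for right semistability is obtained by the canonical isomorphism $E \otimes F \cong F \otimes E$, which exchanges the left and right roles. There is no serious obstacle here: the argument is essentially a one-line application of the already-proven numerical criterion, and the proof is complete.
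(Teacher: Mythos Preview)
Your proof is correct and follows essentially the same approach as the paper: define $E_1$ as the image of $V$ in $E$, observe that $V \subseteq E_1 \otimes F$, and apply the numerical criterion of Proposition~\ref{Pro:semistable} to force $\rang(E_1) \geqslant \rang(E)$. Your version is slightly more explicit (checking $E_1 \neq 0$ and framing the argument as a contradiction), but the content is identical.
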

\begin{proof}
Let $E_1$ be the image of $V$ in $E$ (namely the smallest vector
subspace of $E$ such that $V\subset E_1\otimes F$). One has
$V\subset E_1\otimes F$. Hence by the left semistability of $V$ one
has (see Proposition \ref{Pro:semistable}) $\rang(E_1)\geqslant
\rang(E)$. Therefore $E_1=E$.
\end{proof}

\begin{prop}
Let $E$ and $F$ be two finite dimensional vector spaces over $K$ and
$V$ be a non-zero vector subspace of $E\otimes F$. Let $E_1$ and
$F_1$ be respectively vector subspaces of $E$ and $F$ such that
$0\subsetneq E_1\subsetneq E$ and $0\subsetneq F_1\subsetneq F$, and
\[
r_1=\rang(V\cap(E_1\otimes F_1)),\quad r_2=\rang(V\cap(E_1\otimes
F+E\otimes F_1)).
\]
Assume that $V$ is both-sided stable, then one has
\begin{equation}
\frac{r_1+r_2}{\rang(V)}<\frac{\rang(E_1)}{\rang(E)}+\frac{\rang(F_1)}{\rang(F)}.
\end{equation}
\end{prop}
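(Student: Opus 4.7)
The plan is to choose the natural two-step filtrations picking out $E_1$ and $F_1$, compute the expectation of their tensor product restricted to $V$ in terms of $r_1$ and $r_2$, and apply the both-sided stability hypothesis.

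More precisely, I would take $\mathcal F$ to be the filtration of $E$ associated with the flag $0\subsetneq E_1\subsetneq E$ and the jump points $\{1,0\}$, and $\mathcal G$ the filtration of $F$ associated with $0\subsetneq F_1\subsetneq F$ and jump points $\{1,0\}$. By the very definition of the tensor product of filtrations, $(\mathcal F\otimes\mathcal G)^t(E\otimes F)$ equals $E\otimes F$ for $t\leqslant 0$, equals $E_1\otimes F+E\otimes F_1$ for $0<t\leqslant 1$, equals $E_1\otimes F_1$ for $1<t\leqslant 2$, and vanishes for $t>2$. Hence the restricted filtration $(\mathcal F\otimes\mathcal G)|_V$ has at most three jumps, at $0$, $1$, $2$, with subquotient ranks $\rang(V)-r_2$, $r_2-r_1$ and $r_1$ respectively.

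Using the formula \eqref{Equ:exp} for the expectation, this gives
\[
\mathbb E[(\mathcal F\otimes\mathcal G)|_V]=\frac{0\cdot(\rang(V)-r_2)+1\cdot(r_2-r_1)+2\cdot r_1}{\rang(V)}=\frac{r_1+r_2}{\rang(V)}.
\]
On the other hand one computes directly $\mathbb E[\mathcal F]=\rang(E_1)/\rang(E)$ and $\mathbb E[\mathcal G]=\rang(F_1)/\rang(F)$. Since $0\subsetneq E_1\subsetneq E$ and $0\subsetneq F_1\subsetneq F$, both filtrations $\mathcal F$ and $\mathcal G$ are non-degenerated, so the both-sided stability hypothesis on $V$ applies and yields the strict inequality $\mathbb E[(\mathcal F\otimes\mathcal G)|_V]<\mathbb E[\mathcal F]+\mathbb E[\mathcal G]$. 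Substituting the three expectations computed above gives precisely the claimed estimate.

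There is no real obstacle here: the proof is a direct unwinding of the definitions, parallel to the argument used to prove the implication ``$\Longrightarrow$'' in Proposition \ref{Pro:semistable}, except that one now tests both-sided stability against a rank-one flag on each factor simultaneously rather than against a trivial filtration on one side. The only thing to double-check is that the identification of $(\mathcal F\otimes\mathcal G)^t$ uses the correct convention $(\mathcal F\otimes\mathcal G)^t=\sum_{a+b=t}\mathcal F^a E\otimes\mathcal G^b F$, which it does, and that one is entitled to the strict inequality, which follows from the non-degeneracy of $\mathcal F$ (or of $\mathcal G$) together with the definition of both-sided stability.
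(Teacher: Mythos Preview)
Your proof is correct and follows essentially the same approach as the paper: both choose the two-step filtrations on $E$ and $F$ with jump points $1>0$, identify the tensor product filtration with the flag $0\subsetneq E_1\otimes F_1\subsetneq E_1\otimes F+E\otimes F_1\subsetneq E\otimes F$, compute $\mathbb E[(\mathcal F\otimes\mathcal G)|_V]=(r_1+r_2)/\rang(V)$, and apply both-sided stability. Your write-up is slightly more detailed in justifying non-degeneracy and checking the tensor product convention, but the argument is identical.
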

\begin{proof}
Let $\mathcal F$ be the filtration of $E$ corresponding to the flag
$0\subsetneq E_1\subsetneq E$ and the sequence $1>0$. Let $\mathcal
G$ be the filtration of $F$ corresponding to the flag $0\subsetneq
F_1\subsetneq F$ and the sequence $1>0$. Then the tensor product
filtration $\mathcal F\otimes\mathcal G$ corresponds to the flag
\[0\subsetneq E_1\otimes F_1\subsetneq E_1\otimes F+E\otimes F_1\subsetneq E\otimes F\]
and the sequence $2>1>0$. Therefore, the expectation of the
restricted filtration $(\mathcal F\otimes\mathcal G)|_V$ is
\[\frac{2r_1+(r_2-r_1)}{\rang(V)}=\frac{r_1+r_2}{\rang(V)}.\]
Since $V$ is both-sided stable, one has
\[\mathbb E[(\mathcal F\otimes\mathcal G)|_V]<\mathbb E[\mathcal F]+\mathbb E[\mathcal G]=\frac{\rang(E_1)}{\rang(E)}+\frac{\rang(F_1)}{\rang(F)}.\]
The proposition is thus proved.
\end{proof}

\begin{coro}\label{Cor:leng}
Let $E$ and $F$ be finite dimensional vector spaces over $K$ and $V$
be a non-zero vector subspace of $E\otimes F$. Assume that $V$ is
both-sided stable and contains a line $M$ of tensorial rank $\rho$, then one
has
\[\frac{2}{\rang(V)}<\frac{\rho}{\rang(E)}+\frac{\rho}{\rang(F)}.\]
\end{coro}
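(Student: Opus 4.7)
I would apply the preceding proposition with $E_1 \subseteq E$ and $F_1 \subseteq F$ chosen as the images of the line $M$, that is, the smallest subspaces for which $M \subseteq E_1 \otimes F$ and $M \subseteq E \otimes F_1$; each has rank $\rho$ by the definition of tensorial rank. Writing a generator of $M$ as $\sum_{i=1}^{\rho} e_i \otimes f_i$ with $(e_i)$ a basis of $E_1$ and $(f_i)$ a basis of $F_1$ shows $M \subseteq E_1 \otimes F_1 \subseteq E_1 \otimes F + E \otimes F_1$, so both integers $r_1$ and $r_2$ in the statement of the preceding proposition are at least $1$.

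In the generic case $\rho < \rang(E)$ and $\rho < \rang(F)$, the subspaces $E_1$ and $F_1$ are proper, the preceding proposition applies, and it yields directly
\[
\frac{2}{\rang(V)} \leq \frac{r_1 + r_2}{\rang(V)} < \frac{\rang(E_1)}{\rang(E)} + \frac{\rang(F_1)}{\rang(F)} = \frac{\rho}{\rang(E)} + \frac{\rho}{\rang(F)}.
\]

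The remaining boundary cases $\rho = \rang(E)$ or $\rho = \rang(F)$ need a separate argument, since the natural $E_1, F_1$ are no longer proper. By symmetry, assume $\rho = \rang(E) =: n$; then $\rho/\rang(E) + \rho/\rang(F) = 1 + n/\rang(F) > 1$, so it is enough to show $\rang(V) \geq 2$. I would obtain this from the claim that a single line $M$ in $E \otimes F$ can never be both-sided stable. When $\rho < \rang(F)$, the trivial filtration $\mathcal F_0$ of $E$ together with the filtration $\mathcal G$ of $F$ along $0 \subsetneq F_1 \subsetneq F$ with jumps $\{1,0\}$ already yields $\mathbb E[(\mathcal F_0 \otimes \mathcal G)|_M] = 1 > n/\rang(F)$, contradicting even semistability. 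The main obstacle is the extremal subcase $\rho = \rang(E) = \rang(F) = n$, where the generator $\alpha$ is invertible as a map $E^\vee \to F$; here one takes a one-dimensional $E_1' \subset E$ together with $F_1' := \alpha((E_1')^\perp)$ of rank $n-1$ and the corresponding filtrations with jumps $\{1,0\}$. The invertibility of $\alpha$ ensures $M \subset E_1' \otimes F + E \otimes F_1'$ while $M \not\subset E_1' \otimes F_1'$, producing the equality $\mathbb E[(\mathcal F' \otimes \mathcal G')|_M] = 1 = 1/n + (n-1)/n = \mathbb E[\mathcal F'] + \mathbb E[\mathcal G']$, contradicting the strict both-sided stability. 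This forces $V \neq M$, hence $\rang(V) \geq 2$ and $2/\rang(V) \leq 1 < 1 + n/\rang(F)$, completing the proof.
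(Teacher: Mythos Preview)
Your generic-case argument is exactly the paper's proof: apply the preceding proposition to the images $E_1$, $F_1$ of $M$ (both of rank $\rho$) and use $r_1, r_2 \geq 1$. The paper's own proof consists of precisely those three lines and does \emph{not} treat the boundary cases $\rho = \rang(E)$ or $\rho = \rang(F)$ at all; in every later application the line $M$ has tensorial rank strictly below $\min(\rang E,\rang F)$, so the images are automatically proper. Your additional boundary analysis is thus more scrupulous than the paper requires (and is correct, apart from the degenerate situation $\rang E = \rang F = 1$, where every filtration is degenerate, $V=M$ is vacuously both-sided stable, and the stated inequality $2<2$ actually fails---a corner case the paper tacitly excludes).
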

\begin{proof}
Let $E_1$ and $F_1$ be respectively the images of $M$ in $E$ and in
$F$. One has $\rang(V\cap(E_1\otimes F_1))\geqslant 1$ and
$\rang(V\cap(E_1\otimes F+E\otimes F_1))\geqslant1$. Note that
$\rang(E_1)=\rang(F_1)=\rho$. By the previous proposition, one
obtains the required inequality.
\end{proof}

\begin{prop}\label{Pro:ell}
Let $E$ and $F$ be two finite dimensional vector spaces over $K$ and
$V$ be a non-zero vector subspace of $E\otimes F$. Assume that $V$
is left stable and contains a line of tensorial rank $\rho$, then one has
\[\frac{1}{\rang(V)}<\frac{\rho}{\rang(E)}.\]
Similarly, if $V$ is right stable and contains a line of tensorial rank
$\rho$, then one has
\[\frac{1}{\rang(V)}<\frac{\rho}{\rang(F)}.\]
\end{prop}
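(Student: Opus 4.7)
My plan is to reduce the statement to an immediate application of the characterization of left stability given in Proposition \ref{Pro:semistable}. Let $M \subset V$ be the hypothesized line of tensorial rank $\rho$, and let $E_1 \subset E$ denote the image of $M$ in $E$, i.e., the smallest vector subspace of $E$ such that $M \subset E_1 \otimes F$. By the very definition of tensorial rank, $\rang(E_1) = \rho$.

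The inclusions $M \subset V$ and $M \subset E_1 \otimes F$ together give $M \subset V \cap (E_1 \otimes F)$, and in particular $\rang(V \cap (E_1 \otimes F)) \geqslant 1$. In the generic case $\rho < \rang(E)$, the subspace $E_1$ satisfies $0 \subsetneq E_1 \subsetneq E$, so the left-stability hypothesis on $V$ combined with Proposition \ref{Pro:semistable} yields the strict inequality
\[
\frac{\rang(V \cap (E_1 \otimes F))}{\rang(V)} < \frac{\rang(E_1)}{\rang(E)} = \frac{\rho}{\rang(E)}.
\]
Using $\rang(V \cap (E_1 \otimes F)) \geqslant 1$, this immediately gives the desired bound $\frac{1}{\rang(V)} < \frac{\rho}{\rang(E)}$.

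The right-stable statement follows by a symmetric argument: replace $E_1$ by the image $F_1 \subset F$ of $M$ in $F$ (of rank $\rho$) and invoke the right-stability version of Proposition \ref{Pro:semistable} applied to $F_1$. I do not anticipate any serious obstacle in this proof; essentially the only content is the observation that the image in $E$ (resp.~$F$) of a line of tensorial rank $\rho$ is a natural test subspace of rank exactly $\rho$, after which Proposition \ref{Pro:semistable} does all the work. The only mild subtlety is the extremal situation $\rho = \rang(E)$, in which $E_1 = E$ and Proposition \ref{Pro:semistable} does not apply; this case is implicitly excluded from the scope of the statement since the claimed strict inequality would then force $\rang(V) \geqslant 2$, which has to be verified separately from left stability alone.
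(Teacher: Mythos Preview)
Your proof is correct and follows essentially the same argument as the paper: take the image $E_1$ of the line $M$ in $E$, note that $M \subset V \cap (E_1 \otimes F)$ so this intersection has rank at least $1$, and apply the strict inequality from Proposition~\ref{Pro:semistable}. Your closing remark about the boundary case $\rho = \rang(E)$ is well taken; the paper's own proof tacitly assumes $E_1 \subsetneq E$ as well, and in every application of the proposition in the paper one indeed has $\rho < \rang(E)$, so the edge case never arises.
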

\begin{proof}
By symmetry it suffices to prove the first inequality. Let $M$ be a
one-dimensional subspace of $V$ which has tensorial rank $\rho$, and let
$E_1$ be its image in $E$. Since $V$ is left stable, by Proposition
\ref{Pro:semistable} one obtains
\[\frac{\rang(V\cap(E_1\otimes F))}{\rang(V)}<\frac{\rang(E_1)}{\rang(E)}=\frac{\rho}{\rang(E)}.\]
Since $V\cap(E_1\otimes F)$ contains $M$, it has rank $\geqslant 1$.
The required inequality is thus proved.
\end{proof}

\subsection{Kempf's filtration pair}

We recall below a result of Totaro \cite[Proposition 2]{Totaro96}
which proves the existence of Kempf's filtration pair for a vector
subspace of the tensor product of two vector spaces which is not
both-sided semistable.

\begin{prop}\label{Pro:destabilite}
Let $E$ and $F$ be two finite dimensional vector spaces over ${K}$,
and $\mathbf{S}$ be the subset of
$\mathbf{Fil}(E)\times\mathbf{Fil}(F)$ consisting of filtration
pairs $(\mathcal F,\mathcal G)$ such that $\mathbb E[\mathcal
F]=\mathbb E[\mathcal G]=0$ and that both of filtrations $\mathcal
F$ and $\mathcal G$ are not trivial. Assume that $V$ is a non-zero
vector subspace of $E\otimes F$ which is not both-sided stable. Then
the function $\Theta:\mathbf{S}\rightarrow\mathbb R$ which sends
$(\mathcal F,\mathcal G)$ to $-{\mathbb E[(\mathcal F\otimes\mathcal
G)|_V]}{(\|\mathcal F\|^2+\|\mathcal G\|^2)^{-1/2}}$ attains its
minimum value $c$ which is non-positive. Moreover, if $(\mathcal
F_1,\mathcal G_1)$ is a point in $\mathbf{S}$ at which the function
$\Theta$ takes its minimum value, then for all filtrations $\mathcal
F\in\mathbf{Fil}(E)$ and $\mathcal G\in\mathbf{Fil}(F)$, one has
\begin{equation}\mathbb E\mathbb[(\mathcal F\otimes\mathcal G)|_V]\leqslant \mathbb E[\mathcal F]+\mathbb E[\mathcal G]-c\frac{\langle\mathcal F_1,\mathcal F\rangle+\langle\mathcal G_1,\mathcal G\rangle}{(\|\mathcal F_1\|^2+\|\mathcal G_1\|^2)^{1/2}}.\end{equation}
\end{prop}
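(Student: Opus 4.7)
I would follow the Kempf--Totaro variational approach (Proposition 2 of the cited Totaro paper). The plan is: reduce to a compact problem by scale invariance, establish existence of the minimum by a combinatorial stratification argument, and derive the inequality at the minimum by a first-order variational calculation.

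\emph{Scale reduction and existence.} First I would observe that $\Theta$ is homogeneous of degree zero under joint positive dilation $(\mathcal{F},\mathcal{G})\mapsto(\lambda\mathcal{F},\lambda\mathcal{G})$: by \eqref{Equ:varepsion} and \eqref{Equ:tauv} the numerator $-\mathbb{E}[(\mathcal{F}\otimes\mathcal{G})|_V]$ scales by $\lambda$, and by \eqref{Equ:epscoro} the denominator $\sqrt{\|\mathcal{F}\|^2+\|\mathcal{G}\|^2}$ also scales by $\lambda$. Thus I may minimize on the slice $\|\mathcal{F}\|^2+\|\mathcal{G}\|^2=1$. I would stratify this slice by combinatorial type, each type recording the flag types of $\mathcal{F}$ and $\mathcal{G}$ together with the sequence of dimensions $\dim(V\cap(\mathcal{F}\otimes\mathcal{G})^s(E\otimes F))$ at each jump value $s$ of the tensor filtration; there are only finitely many such types. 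On each type the weights vary in a bounded semi-algebraic set (cut out by the linear equalities $\mathbb{E}[\mathcal{F}]=\mathbb{E}[\mathcal{G}]=0$, strict linear inequalities from the flag, and the normalizing quadric), and the numerator is an affine function of the weights. Its infimum is therefore attained on the closure of each stratum, so the global minimum $c$ is attained; the hypothesis that $V$ is not both-sided stable gives $c\leqslant 0$.

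\emph{Variational inequality at the minimum.} At a minimizer $(\mathcal{F}_1,\mathcal{G}_1)$, I would first translate $\mathcal{F}$ and $\mathcal{G}$ so that $\mathbb{E}[\mathcal{F}]=\mathbb{E}[\mathcal{G}]=0$; the sought inequality is invariant under such translation by \eqref{Equ:tauprten} and \eqref{Equ:correl}, using $\mathbb{E}[\mathcal{F}_1]=\mathbb{E}[\mathcal{G}_1]=0$. By Bruhat decomposition (\S\ref{SubSec:weighted}), I would pick bases $\mathbf{e}$ of $E$, $\mathbf{f}$ of $F$, and $\mathbf{v}$ of $V$ simultaneously compatible with $\{\mathcal{F}_1,\mathcal{F}\}$, $\{\mathcal{G}_1,\mathcal{G}\}$, and $\{(\mathcal{F}_1\otimes\mathcal{G}_1)|_V,(\mathcal{F}\otimes\mathcal{G})|_V\}$ respectively. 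Define the one-parameter family $\mathcal{F}_t$ on $E$ with weights $\lambda_{\mathcal{F}_1}(e)+t\lambda_{\mathcal{F}}(e)$ on $\mathbf{e}$, and $\mathcal{G}_t$ analogously on $F$; then $\psi(t):=\sqrt{\|\mathcal{F}_t\|^2+\|\mathcal{G}_t\|^2}$ is smooth near $t=0$ with $\psi'(0)=(\langle\mathcal{F}_1,\mathcal{F}\rangle+\langle\mathcal{G}_1,\mathcal{G}\rangle)/\psi(0)$. Applying the elementary super-additivity $\min_S(\alpha_i+t\beta_i)\geqslant\min_S\alpha_i+t\min_S\beta_i$ (valid for $t\geqslant 0$) to each $v_k\in\mathbf{v}$ expressed in the basis $\mathbf{e}\otimes\mathbf{f}$, and using compatibility of $\mathbf{v}$ with both restricted tensor filtrations, I obtain
\[\mathbb{E}[(\mathcal{F}_t\otimes\mathcal{G}_t)|_V]\geqslant\mathbb{E}[(\mathcal{F}_1\otimes\mathcal{G}_1)|_V]+t\,\mathbb{E}[(\mathcal{F}\otimes\mathcal{G})|_V]\quad(t\geqslant 0).\]
Combining this with the minimality condition $-\mathbb{E}[(\mathcal{F}_t\otimes\mathcal{G}_t)|_V]\geqslant c\,\psi(t)$ for all $t\geqslant 0$, which is an equality at $t=0$, then subtracting the equality, dividing by $t>0$, and letting $t\to 0^+$, produces
\[\mathbb{E}[(\mathcal{F}\otimes\mathcal{G})|_V]\leqslant -c\cdot\frac{\langle\mathcal{F}_1,\mathcal{F}\rangle+\langle\mathcal{G}_1,\mathcal{G}\rangle}{\sqrt{\|\mathcal{F}_1\|^2+\|\mathcal{G}_1\|^2}},\]
which is the claim after un-translating.

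\emph{Main obstacle.} The principal technical point is arranging the three Bruhat compatibilities simultaneously (for $E$, $F$, and especially for $V$ with the two restricted tensor filtrations on it); this is precisely what lets the computation of $\lambda_{\mathcal{F}_t\otimes\mathcal{G}_t}(v_k)$ reduce to the elementary min-additivity identity, and lets the averaged sum recover $\mathbb{E}[(\mathcal{F}\otimes\mathcal{G})|_V]$ exactly rather than some strictly smaller quantity that would fail to imply the sharp inequality. Once these compatibilities are in place, the remaining steps are routine first-order calculus.
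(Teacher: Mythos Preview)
Your approach is correct and is essentially the same as the paper's: the paper does not give a self-contained proof but defers to Totaro's Proposition~2 and singles out the Ramanan--Ramanathan convexity lemma as the key ingredient, and your super-additivity step $\min_i(\alpha_i+t\beta_i)\geqslant\min_i\alpha_i+t\min_i\beta_i$ together with the three Bruhat compatibilities is precisely that convexity lemma made explicit in the language of filtrations. Your first-order variation at the minimizer is then the standard derivation of the linear inequality from convexity plus optimality, so there is nothing to add.
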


The key point of the proof is a convexity lemma which has been
introduced by Ramanan and Ramanathan (cf. \cite[Lemma
1.1]{Ramanan_Ramanathan}, see also \cite[Lemma 3]{Totaro96}). Note
that in \cite[Proposition 2]{Totaro96}, Totaro has assumed that $V$
is not both-sided semistable. In this case, the minimizing
filtration pair $(\mathcal F_1,\mathcal G_1)$ is unique up to
dilation, and the minimum value $c$ of the function $\Theta$ is
negative. In the case where $V$ is not both-sided stable but
both-sided semistable, the proposition also holds and the proof is
the same as in \cite{Totaro96}. However, in this case, the minimum
value $c$ equals $0$, and the minimizing filtration pair need not be
unique up to dilation.

\section{Applications in arithmetic semistability}

In this section, we discuss the relationship between the geometric
and arithmetic (semi)stability conditions.

\subsection{A criterion of the arithmetic semistability}

Let $\overline E$ be a non-zero hermitian vector bundle over
$\overline{\mathbb Q}$. For any filtration $\mathcal F$ of $E$, we
define
\[\mathbb E_{\hat{\mu}}[\mathcal F]:=-\frac{1}{\rang(E)}\int_{\mathbb R}t\,\mathrm{d}\ndeg(\mathcal F^t\overline E).\]
The following is a criterion of semistablity which is essentially
due to Bogomolov (see \cite{Raynaud81}).

\begin{prop}
A non-zero hermitian vector bundle $\overline E$ over
$\overline{\mathbb Q}$ is semistable (resp. stable) if and only if,
for any non-degenerate filtration $\mathcal F$ of $E$, one has
\[\mathbb E_{\hat{\mu}}[\mathcal F]\leqslant \hat{\mu}(\overline E) \mathbb E[\mathcal F]\qquad(\text{resp. } \mathbb E_{\hat{\mu}}[\mathcal F]<\hat{\mu}(\overline E)\mathbb E[\mathcal F]).\]
\end{prop}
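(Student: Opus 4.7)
The plan is to unwind the quantity $\mathbb E_{\hat\mu}[\mathcal F]$ by writing $\mathcal F$ as a weighted flag, apply summation by parts, and thereby reduce the desired inequality to the standard slope inequalities $\hat{\mu}(\overline E_i) \leq \hat{\mu}(\overline E)$ (resp.\ $<$) on the proper non-zero steps of the flag.

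Concretely, I would associate to $\mathcal F$ its flag $0 = E_0 \subsetneq E_1 \subsetneq \ldots \subsetneq E_n = E$ with jumps $a_1 > \ldots > a_n$, and observe that non-degeneracy of $\mathcal F$ amounts to $n \geq 2$. Since the left-continuous step function $t \mapsto \ndeg(\mathcal F^t \overline E)$ takes the value $\ndeg(\overline E_i)$ on $(a_{i+1}, a_i]$, the Stieltjes integral in the definition of $\mathbb E_{\hat\mu}$ collapses to
$$\mathbb E_{\hat\mu}[\mathcal F] = \frac{1}{\rang(E)} \sum_{i=1}^n a_i \bigl(\ndeg(\overline E_i) - \ndeg(\overline E_{i-1})\bigr),$$
while
$$\mathbb E[\mathcal F] = \frac{1}{\rang(E)} \sum_{i=1}^n a_i \bigl(\rang(E_i) - \rang(E_{i-1})\bigr).$$
Setting $A_i := \ndeg(\overline E_i) - \hat{\mu}(\overline E)\, \rang(E_i) = \rang(E_i)\bigl(\hat{\mu}(\overline E_i) - \hat{\mu}(\overline E)\bigr)$ and noting $A_0 = A_n = 0$, Abel summation yields
$$\rang(E) \bigl(\mathbb E_{\hat\mu}[\mathcal F] - \hat{\mu}(\overline E)\, \mathbb E[\mathcal F]\bigr) = \sum_{i=1}^{n-1} (a_i - a_{i+1})\, A_i,$$
where each coefficient $a_i - a_{i+1}$ is strictly positive, and the range is nonempty because $n \geq 2$.

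With this identity in hand, both directions are clean. For the forward direction, semistability (resp.\ stability) of $\overline E$ gives $A_i \leq 0$ (resp.\ $< 0$) for each $i \in \{1,\ldots,n-1\}$, so the right-hand side is $\leq 0$ (resp.\ $< 0$). For the converse, it suffices to test the hypothesis on the two-step filtration associated to an arbitrary flag $0 \subsetneq F \subsetneq E$ with jumps $1 > 0$ (which is non-degenerate): the hypothesis specializes to $\ndeg(\overline F)/\rang(E) \leq \hat{\mu}(\overline E)\cdot\rang(F)/\rang(E)$ (resp.\ $<$), i.e.\ $\hat{\mu}(\overline F) \leq \hat{\mu}(\overline E)$ (resp.\ $<$). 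I do not anticipate any serious obstacle; the only care needed is with the sign convention inherited from the definition of $\mathbb E_{\hat\mu}$ and the bookkeeping in the Abel summation.
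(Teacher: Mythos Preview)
Your proof is correct and follows essentially the same strategy as the paper's: both reduce the difference $\mathbb E_{\hat\mu}[\mathcal F]-\hat\mu(\overline E)\,\mathbb E[\mathcal F]$ via summation by parts to a positively weighted combination of the quantities $\ndeg(\overline E_i)-\rang(E_i)\hat\mu(\overline E)$, and the converse direction (testing on a two-step flag with jumps $1>0$) is identical. The only cosmetic difference is that the paper carries this out in integral form, writing $\mathbb E_{\hat\mu}[\mathcal F]=a\hat\mu(\overline E)+\frac{1}{\rang(E)}\int_a^{+\infty}\ndeg(\mathcal F^t\overline E)\,\mathrm dt$ and bounding the integrand pointwise, whereas you work with the discrete Abel identity on the flag; your version arguably makes the strict-inequality case for stability a touch more transparent.
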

\begin{proof}
``$\Longleftarrow$'': Let $F$ be a vector subspace of $E$ such that
$0\subsetneq F\subsetneq E$. Denote by $\mathcal F$ the filtration
of $E$ corresponding to the flag $0\subsetneq F\subsetneq E$ and the
jump points $1>0$. One has
\[\mathbb E_{\hat{\mu}}[\mathcal F]=\frac{\ndeg(\overline F)}{\rang(E)}\qquad\text{and}\qquad
\mathbb E[\mathcal F]\hat{\mu}(\overline
E)=\frac{\rang(F)}{\rang(E)}\hat{\mu}(\overline E).\] Hence we
obtain the desired result.

``$\Longrightarrow$'': Let $\mathcal F$ be a non-degenerate
filtration of $E$. If $\overline E$ is semistable, for sufficiently
negative number $a$, one has
\[\begin{split}&\quad\;\mathbb E_{\hat{\mu}}[\mathcal F]=-a\hat{\mu}(\overline E)+\int_{a}^{+\infty}\frac{\ndeg(\mathcal F^t\overline E)}{\rang(E)}\,\mathrm{d}t
\\&\leqslant -a\hat{\mu}(\overline E)+\hat{\mu}(\overline E)\int_a^{+\infty}\frac{\rang(\mathcal F^t(E))}{\rang(E)}\,\mathrm{d}t=\hat{\mu}(\overline E)\mathbb E[\mathcal F],
\end{split}\]
where the inequality is strict if $\overline E$ is stable.
\end{proof}

\subsection{Harder-Narasimhan filtration}

The $\mathbb R$-indexed Harder-Narasimhan filtration has been
introduced  \cite[\S 2.2.2]{Chen10b}. Let $\overline E$ be a
non-zero hermitian vector bundle over $\overline{\mathbb Q}$ and
$0=E_0\subsetneq E_1\subsetneq\ldots\subsetneq E_n=E$ be its
Harder-Narasimhan flag (see \S\ref{Subsec:slopes}). We define a
filtration $\mathcal F$ of $E$ as follows:
\[\mathcal F^tE=\bigcup_{\begin{subarray}{c}1\leqslant i\leqslant n\\ \mu_i\geqslant t
\end{subarray}}E_i,\]
where for each $i\in\{1,\ldots,n\}$, $\mu_i:=\hat{\mu}(\overline
E_i/\overline E_{i-1})$. It can be shown that (see \cite{Chen10b}
Corollary 2.2.3)
\[\mathcal F^tE=\sum_{\begin{subarray}{c}
0\neq F\subset E\\
\hat{\mu}_{\min}(\overline F)\geqslant t
\end{subarray}}F.\]
Moreover, we observe from the definition that the expectation of
$\mathcal F$ is just $\hat{\mu}(\overline E)$. The filtration
$\mathcal F$ is called the ($\mathbb R$-indexed) Harder-Narasimhan
filtration of $\overline E$.

Note that, if $\overline E$ is a non-zero hermitian vector bundle
over $\overline{\mathbb Q}$ and if $\mathcal F$ is the
Harder-Narasimhan filtration of it, then for any $t\in\mathbb R$,
either the subquotient \[\mathrm{sq}_{\mathcal F}^t(\overline
E)=\mathcal F^t\overline E/\mathcal F^{t+}\overline E\] is either
zero or  semistable of slope $t$.

\subsection{Strong slope inequalities under geometric (semi)stability}

Let $r,n,m$ be three integers which are $\geqslant 1$. We say that
\emph{ the condition $B(r,n,m)$ holds} if for all hermitian vector bundles
$\overline E$ and $\overline F$ of rank $n$ and $m$ on
$\Spec{\overline{\mathbb Q}}$ respectively, and any subspace $V$ of
rank $r$ of $E\otimes F$, one has
\[\hat{\mu}(V)\leqslant\hat{\mu}_{\max}(\overline E)+\hat{\mu}_{\max}(\overline F).\]
Denote by $B(r)$ the condition
\[\forall\,n\geqslant 1,\;\forall\,m\geqslant1,\quad B(r,n,m).\]
With this notation, the general validity of the conjectural inequality (\ref{equamaxbis}) --- or equivalently, a positive answer to  Problems  \ref{mumaxconj}-\ref{HNtens} --- may be rephrased as: 
\[\forall\,r\geqslant 1,\quad B(r).\]

\begin{lemm}\label{Lem:semstable}
Let $p\geqslant 1$ and $r\geqslant 1$ be integers, $(\overline
E_i)_{i=1}^{p}$ and $(\overline F_i)_{i=1}^{p}$ be two families of
hermitian vector bundles over $\overline{\mathbb Q}$, and $V$ be a
vector subspaces of rank $r$ of
\[(\overline E_1\otimes\overline F_1)\oplus\cdots\oplus(\overline E_p\otimes\overline F_p).\]
Assume that the condition $B(s,n,m)$ holds for all
integers $s$, $n$ and $m$ such that
\[1\leqslant s\leqslant r,\quad n\leqslant\max_{1\leqslant i\leqslant p}\rang(E_i),\quad
m\leqslant\max_{1\leqslant i\leqslant p}\rang(F_i).\] For any
$i\in\{1,\ldots, p\}$, let $\mathcal F_i$ and $\mathcal G_i$ be
respectively the Harder-Narasimhan filtrations of $\overline E_i$
and $\overline F_i$. Let $\mathcal H=(\mathcal F_1\otimes\mathcal
G_1)\oplus\cdots\oplus(\mathcal F_p\otimes\mathcal G_p)$. Then one
has $\hat{\mu}(\overline V)\leqslant\mathbb E[\mathcal H|_V]$.
\end{lemm}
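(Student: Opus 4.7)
My plan is to analyse the sub-quotients of the restricted filtration $\mathcal H|_V$ and bound their slopes via the hypothesis $B(s,n,m)$. By Proposition \ref{Pro:subquotient tp} (compatibility of subquotients with tensor product filtrations) together with the analogous compatibility for direct sums, one has a canonical identification
\[\mathrm{sq}^t_{\mathcal H}\Big(\bigoplus_{j=1}^p E_j\otimes F_j\Big)\cong\bigoplus_{j=1}^p\bigoplus_{a+b=t}\mathrm{sq}^a_{\mathcal F_j}(\overline E_j)\otimes\mathrm{sq}^b_{\mathcal G_j}(\overline F_j)=:B_t,\]
endowed with the orthogonal direct sum of the hermitian tensor product metrics on the sub-quotient bundles. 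The sub-quotients $V_t:=\mathrm{sq}^t_{\mathcal H|_V}(V)$ embed into $B_t$, with $\rang(V_t)\leqslant\rang(V)=r$. Since $\ndeg(\overline V)=\sum_t\ndeg(\overline V_t)$ by additivity of the Arakelov degree along a filtration, and $\sum_t t\,\rang(V_t)=r\,\mathbb E[\mathcal H|_V]$ by definition of the expectation, the lemma reduces to the pointwise estimate $\ndeg(\overline V_t)\leqslant t\cdot\rang(V_t)$ for every $t\in\mathbb R$.

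Next, I would exploit the defining property of the $\mathbb R$-indexed Harder--Narasimhan filtration: each non-zero $\mathrm{sq}^a_{\mathcal F_j}(\overline E_j)$ is \emph{semistable} of slope $a$, and similarly $\mathrm{sq}^b_{\mathcal G_j}(\overline F_j)$ is semistable of slope $b$. For a single summand $S_{j,a,b}:=\mathrm{sq}^a_{\mathcal F_j}(\overline E_j)\otimes\mathrm{sq}^b_{\mathcal G_j}(\overline F_j)$ and an arbitrary subspace $W\subseteq S_{j,a,b}$ of rank $s\leqslant r$, the standing hypothesis $B(s,\rang(\mathrm{sq}^a_{\mathcal F_j}),\rang(\mathrm{sq}^b_{\mathcal G_j}))$ applies, because $\rang(\mathrm{sq}^a_{\mathcal F_j})\leqslant\rang(E_j)\leqslant\max_i\rang(E_i)$ and analogously on the right. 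Combined with semistability (which collapses $\hat{\mu}_{\max}$ into $\hat{\mu}$), this yields $\hat{\mu}(\overline W)\leqslant a+b=t$.

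I would then lift this estimate to the orthogonal direct sum $B_t$ by induction on the number of non-zero summands. Given $W\subseteq B_t$ of rank $\leqslant r$, splitting off one summand $S$ with complement $B_t'$, the short exact sequence
\[0\longrightarrow W\cap B_t'\longrightarrow W\longrightarrow W/(W\cap B_t')\longrightarrow 0,\]
taken with induced and quotient metrics, yields $\ndeg(\overline W)=\ndeg(\overline{W\cap B_t'})+\ndeg(\overline{W/(W\cap B_t')})$. The inductive hypothesis bounds the first term by $t\cdot\rang(W\cap B_t')$. For the second, the projection $W/(W\cap B_t')\hookrightarrow S$ is injective; one checks place by place that the quotient norm on $W/(W\cap B_t')$ dominates the subspace norm pulled back from $S$, so the corresponding Arakelov degree is at most that of the same $\overline{\mathbb Q}$-vector space equipped with the subspace metric from $S$, which is bounded by $t\cdot\rang(W/(W\cap B_t'))$ via the previous step. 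Summation gives $\ndeg(\overline W)\leqslant t\cdot\rang(W)$; applied to $W=V_t$ this establishes the required pointwise estimate and concludes the proof.

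The main technical point is the local norm comparison between the quotient and subspace metrics on $W/(W\cap B_t')$. At an archimedean place a Pythagorean identity on the orthogonal sum $S\oplus B_t'$ (minimising $\|s\|^2+\|b'+b''\|^2$ over $b''\in W\cap B_t'$) immediately gives the inequality; at a non-archimedean place, where the direct-sum norm is a maximum, the inequality is trivial. Both cases are elementary, but the estimate must hold uniformly across all places of the underlying number field so that the global Arakelov degree inequality is valid.
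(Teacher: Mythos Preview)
Your proof is correct and follows essentially the same approach as the paper: pass to the sub-quotients $V_t$ of the restricted filtration, use that the Harder--Narasimhan sub-quotients of $\overline E_j$ and $\overline F_j$ are semistable of the prescribed slopes, apply $B(s,n,m)$, and sum over $t$. The paper compresses your direct-sum induction and the norm comparison for $V_t\hookrightarrow B_t$ into the single remark that ``the height of the inclusion map is non-positive'', whereas you spell these out explicitly; note that the same quotient-versus-subspace norm comparison you give at the end is also what justifies passing from the subquotient metric on $V_t$ (coming from $\overline V$) to the subspace metric induced from $B_t$, a step you leave implicit.
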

\begin{proof}
For any $t\in\mathbb R$, the subquotient $\mathrm{sq}_{\mathcal
H|_V}^t(V)$ identifies with a vector subspace of
\[\mathrm{sq}_{\mathcal H}^t((E_1\otimes F_1)\oplus\cdots\oplus(E_p\otimes F_p))=\bigoplus_{i=1}^p\bigoplus_{a+b=t}\mathrm{sq}_{\mathcal F_i}^a(E_i)\otimes\mathrm{sq}_{\mathcal G_i}^{b}(F_i).\]
Moreover, the height of the inclusion map is non-positive. Since
$\mathrm{sq}_{\mathcal F_i}^a(\overline E_i)$ (resp.
$\mathrm{sq}_{\mathcal G_i}^{b}(\overline F_i)$) is either zero or
semistable of slope $a$ (resp. $b$), by the hypothesis of the lemma,
for any non-zero vector subspace $W$ of $\mathrm{sq}_{\mathcal
F_i}^aE\otimes\mathrm{sq}_{\mathcal G_i}^bF$ such that
$\rang(W)\leqslant r$, one has $\hat{\mu}(\overline W)\leqslant
t$. Therefore, under the hypothesis of the lemma, one has
\[\widehat{\deg}_{\mathrm{n}}(\mathrm{sq}_{\mathcal H|_V}^t(\overline V))\leqslant t\rang(\mathrm{sq}_{\mathcal H|_V}^t(V)).\]
By taking the sum with respect to $t$ one obtains
$\widehat{\deg}_{\mathrm{n}}(\overline V)\leqslant \rang(V)\mathbb
E[\mathcal H|_V]$ and hence $\hat{\mu}(\overline
V)\leqslant\mathbb E[\mathcal H|_V]$.
\end{proof}

\begin{theo}\label{Thm:semi}
Let $r\geqslant 1$ be an integer, $\overline E$ and $\overline F$ be
two hermitian vector bundles over $\overline{\mathbb Q}$, and $V$ be
a subspace of rank $r$ of $E\otimes F$. Assume that $B(s)$ holds for
any $s\in\{1,\ldots,r\}$.
\begin{enumerate}[1)]
\item If $V$ is  left semistable, then $\hat{\mu}(\overline V)\leqslant\hat{\mu}(\overline E)+\hat{\mu}_{\max}(\overline F)$.
\item If $V$ is right semistable, then $\hat{\mu}(\overline V)\leqslant\hat{\mu}_{\max}(\overline E)+\hat{\mu}(\overline F)$.
\item If $V$ is both-sided semistable, then $\hat{\mu}(\overline V)\leqslant\hat{\mu}(\overline E)+\hat{\mu}(\overline F)$.
\end{enumerate}
\end{theo}
\begin{proof}
1) Let $\mathcal F$ be the Harder-Narasimhan filtration of $E$
(indexed by $\mathbb R$) and $\mathcal G_0$ be the trivial
filtration of $F$. For any $t\in\mathbb R$, the subquotient
$\mathrm{sq}^t(V)$ with respect to the induced filtration $(\mathcal
F\otimes\mathcal G_0)|_V$ can be considered as a vector subspace of
$\mathrm{sq}_{\mathcal F}^t(E)\otimes F$. Moreover, the height of
the inclusion map $\mathrm{sq}^t(V)\rightarrow\mathrm{sq}_{\mathcal
F}^t(E)\otimes F$ is non-positive. Since the rank of
$\mathrm{sq}^t(V)$ is no more than $r$, by the hypothesis of the
theorem we obtain
\[\ndeg(\mathrm{sq}^t(\overline V))\leqslant \rang(\mathrm{sq}^t(V))(t+\hat{\mu}_{\max}(\overline F)),\]
where we have used the fact that either $\mathrm{sq}_{\mathcal
F}^t(E)=0$ or $\mathrm{sq}_{\mathcal F}^t(\overline E)$ is
semistable of slope $t$. By taking the sum with respect to $t$, we
obtain
\[\ndeg(\overline V)\leqslant\rang(V)\mathbb E[(\mathcal F\otimes\mathcal G_0)|_V]
+\rang(V)\hat{\mu}_{\max}(\overline F)\leqslant\rang(V)(\mathbb
E[\mathcal F]+\hat{\mu}_{\max}(\overline F)),\] where the second
inequality comes from the condition that $V$ is left semistable.
Since $\mathcal F$ is the Harder-Narasimhan filtration, one has
$\mathbb E[\mathcal F]=\hat{\mu}(\overline E)$. So we obtain the
desired inequality.

2) The proof is quite similar to that of 1). We need to consider the
Harder-Narasimhan filtration of $\overline F$.

3) Let $\mathcal F$ be the Harder-Narasimhan filtration of
$\overline E$ and $\mathcal G$ be that of $\overline F$. Denote by
$\mathcal H$ the tensor product filtration $\mathcal
F\otimes\mathcal G$. By Lemma \ref{Lem:semstable}, one has
\[\hat{\mu}(\overline V)\leqslant\mathbb E[\mathcal H|_V]
\leqslant\mathbb E[\mathcal F]+\mathbb E[\mathcal G]=
\hat{\mu}(\overline E)+\hat{\mu}(\overline F),\] where the
second inequality comes from the both-sided semistability of $V$.
\end{proof}

\begin{prop}\label{Pro:stable}
Let $r\geqslant 1$ be an integer, $\overline E$ and $\overline F$ be
two hermitian vector bundles over $\overline{\mathbb Q}$ which are
semistable, and $V$ be a subspace of rank $r$ of $E\otimes F$.
Assume that condition $B(s,n',m')$ holds for\footnote{Namely, $n'\leqslant n$,
$m'\leqslant m$, and $n'+m'<n+m$.} $(n',m')<(n,m)$ and $1\leqslant s\leqslant r$. If $V$ is not
both-sided stable, then
\begin{equation}\label{Equ:majoration}\hat{\mu}(\overline V)\leqslant\hat{\mu}(\overline E)+\hat{\mu}(\overline F).\end{equation}
\end{prop}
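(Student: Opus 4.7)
The plan is to apply Proposition \ref{Pro:destabilite} (Kempf's instability flag) to reduce to strictly smaller ranks, at which point the inductive hypothesis $B(s, n', m')$ for $(n', m') < (n, m)$ becomes available, and then to finish via Lemma \ref{Lem:semstable} together with a second use of Proposition \ref{Pro:destabilite}. Since $V$ is not both-sided stable, Proposition \ref{Pro:destabilite} supplies a Kempf pair $(\mathcal F_1, \mathcal G_1)$, both non-trivial with $\mathbb E[\mathcal F_1] = \mathbb E[\mathcal G_1] = 0$, at which the functional $\Theta$ attains its minimum value $c \leqslant 0$; in particular $\mathbb E[(\mathcal F_1 \otimes \mathcal G_1)|_V] = -c (\|\mathcal F_1\|^2 + \|\mathcal G_1\|^2)^{1/2} \geqslant 0$. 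The non-triviality forces the associated flags $0 = E_0 \subsetneq \cdots \subsetneq E_p = E$ and $0 = F_0 \subsetneq \cdots \subsetneq F_q = F$ to have $p, q \geqslant 2$, so every successive quotient $\overline{E_i/E_{i-1}}$ (resp.\ $\overline{F_j/F_{j-1}}$) has rank $n_i' < n$ (resp.\ $m_j' < m$).

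Now filter $V$ by the restriction of $\mathcal F_1 \otimes \mathcal G_1$: degree additivity across the short exact sequences of successive hermitian subbundles gives $\ndeg(\overline V) = \sum_t \ndeg(\overline{V^t})$, where each subquotient $V^t$ injects as a hermitian vector subbundle into $\bigoplus_{a_i + b_j = t} \overline{E_i/E_{i-1}} \otimes \overline{F_j/F_{j-1}}$. Lemma \ref{Lem:semstable} applied to each $V^t$ (with $B(s, n_i', m_j')$ available by induction since $(n_i', m_j') < (n, m)$) yields $\hat\mu(\overline{V^t}) \leqslant \mathbb E[\mathcal H_t|_{V^t}]$, where $\mathcal H_t$ is the direct sum of tensor products of Harder-Narasimhan filtrations of the $\overline{E_i/E_{i-1}}$ and $\overline{F_j/F_{j-1}}$. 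Summing over $t$ and invoking the expectation formula of \S\ref{Subsubsec:refinement}, this translates into
\[ \hat\mu(\overline V) \leqslant \mathbb E[(\widetilde{\mathcal F} \otimes \widetilde{\mathcal G})|_V] - \mathbb E[(\mathcal F_1 \otimes \mathcal G_1)|_V], \]
where $\widetilde{\mathcal F}$ (resp.\ $\widetilde{\mathcal G}$) denotes the refinement of $\mathcal F_1$ (resp.\ $\mathcal G_1$) obtained by grafting onto each successive quotient the relevant Harder-Narasimhan filtration, so that $\mathbb E[\widetilde{\mathcal F}] = \mu_E := \hat\mu(\overline E)$ and $\mathbb E[\widetilde{\mathcal G}] = \mu_F := \hat\mu(\overline F)$.

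To finish, I would bound $\mathbb E[(\widetilde{\mathcal F} \otimes \widetilde{\mathcal G})|_V]$ by applying Proposition \ref{Pro:destabilite} a second time, to the translated pair $(\tau_{-\mu_E}\widetilde{\mathcal F}, \tau_{-\mu_F}\widetilde{\mathcal G})$ of vanishing expectations, getting
\[ \mathbb E[(\widetilde{\mathcal F} \otimes \widetilde{\mathcal G})|_V] - (\mu_E + \mu_F) \leqslant -c \cdot \frac{\langle \mathcal F_1, \widetilde{\mathcal F}\rangle + \langle \mathcal G_1, \widetilde{\mathcal G}\rangle}{(\|\mathcal F_1\|^2 + \|\mathcal G_1\|^2)^{1/2}}. \]
The hard part, on which the whole argument hinges, is the inner-product inequality $\langle \mathcal F_1, \widetilde{\mathcal F}\rangle + \langle \mathcal G_1, \widetilde{\mathcal G}\rangle \leqslant \|\mathcal F_1\|^2 + \|\mathcal G_1\|^2$; I would establish it by a basis-compatibility computation exploiting Bogomolov's semistability criterion, which for the semistable bundle $\overline E$ gives $\mathbb E_{\hat\mu}[\mathcal F_1] \leqslant \mu_E \cdot \mathbb E[\mathcal F_1] = 0$ (and analogously for $\overline F$). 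Granting this, the right-hand side above is at most $-c (\|\mathcal F_1\|^2 + \|\mathcal G_1\|^2)^{1/2} = \mathbb E[(\mathcal F_1 \otimes \mathcal G_1)|_V]$, and substituting back, the two Kempf expectation terms cancel, giving $\hat\mu(\overline V) \leqslant \mu_E + \mu_F$ as required.
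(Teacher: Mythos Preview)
Your overall strategy --- Kempf pair, refine each factor by the Harder--Narasimhan filtrations of its Kempf subquotients, apply Lemma~\ref{Lem:semstable} on each level, then feed the refined pair back into Proposition~\ref{Pro:destabilite} and control the inner products via semistability --- is exactly the paper's. But the intermediate inequality is misstated. With the refinement convention of \S\ref{Subsubsec:refinement} (so that $\lambda_{\widetilde{\mathcal F}}$ on each chosen basis vector equals the HN slope of its image in the corresponding $\mathcal F_1$-subquotient, \emph{not} that slope shifted by the Kempf jump), the filtration that $\widetilde{\mathcal F}\otimes\widetilde{\mathcal G}$ induces on each subquotient of $\mathcal F_1\otimes\mathcal G_1$ is already the unshifted direct sum of HN$\otimes$HN filtrations. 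Summing the level-$t$ bounds via \eqref{Equ:EG} therefore gives
\[
\hat\mu(\overline V)\;\leqslant\;\mathbb E\big[(\widetilde{\mathcal F}\otimes\widetilde{\mathcal G})\big|_V\big],
\]
with no subtraction of $\mathbb E[(\mathcal F_1\otimes\mathcal G_1)|_V]$; this is what the paper obtains. (Your displayed formula with the subtraction would be right only if $\widetilde{\mathcal F}$ were the \emph{shifted} refinement, which is not the construction of \S\ref{Subsubsec:refinement}.)

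Correspondingly, the inner-product inequality you actually need is the sharper
\[
\langle\mathcal F_1,\widetilde{\mathcal F}\rangle\leqslant 0,\qquad \langle\mathcal G_1,\widetilde{\mathcal G}\rangle\leqslant 0,
\]
not merely $\leqslant\|\mathcal F_1\|^2+\|\mathcal G_1\|^2$. This stronger bound is in fact the same statement as the Bogomolov criterion you already invoked: a short computation (which the paper carries out, using \eqref{Equ:innerFG} and integration by parts) shows $\langle\mathcal F_1,\widetilde{\mathcal F}\rangle=\mathbb E_{\hat\mu}[\mathcal F_1]$, and semistability of $\overline E$ together with $\mathbb E[\mathcal F_1]=0$ gives $\mathbb E_{\hat\mu}[\mathcal F_1]\leqslant 0$. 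Since $-c\geqslant 0$ and both inner products are $\leqslant 0$, the Kempf correction term in \eqref{Equ:FG-} is nonpositive and the conclusion $\hat\mu(\overline V)\leqslant\hat\mu(\overline E)+\hat\mu(\overline F)$ follows directly --- no cancellation of two $\mathbb E[(\mathcal F_1\otimes\mathcal G_1)|_V]$ terms is needed.
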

\begin{proof}
Let $\mathbf{S}$ be the subset of
$\mathbf{Fil}(E)\times\mathbf{Fil}(F)$ consisting of filtration
pairs $(\mathcal F,\mathcal G)$ such that $\mathbb E[\mathcal
F]=\mathbb E[\mathcal G]=0$ and that both filtrations $\mathcal F$
and $\mathcal G$ are not trivial. Let
$\Theta:\mathbf{S}\rightarrow\mathbb R$ be the function defined as
\[\Theta(\mathcal F,\mathcal G)=-\frac{\mathbb E[(\mathcal F\otimes\mathcal G)|_V]}{(\|\mathcal F\|^2+\|\mathcal G\|^2)^{1/2}}.\] By Proposition \ref{Pro:destabilite}, there exists a filtration pair $(\mathcal F_1,\mathcal G_1)$ such that $c:=\Theta(\mathcal F_1,\mathcal G_1)$ is the minimum value of the function $\Theta$. One has $c\leqslant 0$. Moreover, for any filtration $\mathcal F$ of $E$ and any filtration $\mathcal G$ of $F$ one has
\begin{equation}\label{Equ:FG-}\mathbb E[\mathcal F]+\mathbb E[\mathcal G]-\mathbb E[(\mathcal F\otimes\mathcal G)|_V]\geqslant c\frac{\langle\mathcal F_1,\mathcal F\rangle+\langle\mathcal G_1,\mathcal G\rangle}{
(\|\mathcal F_1\|^2+\|\mathcal G_1\|^2)^{1/2}}.\end{equation}

We construct filtrations $\mathcal F$ and $\mathcal G$ as in
\S\ref{Subsubsec:refinement} such that, for any $t\in\mathbb R$, the
filtration $\mathcal F$ (resp. $\mathcal G$) induces by subquotient
the Harder-Narasimhan filtration of $\mathrm{sq}_{\mathcal
F_1}^t(\overline E)$ (resp. $\mathrm{sq}_{\mathcal G_1}^t(\overline
F)$). Let $\mathcal H_1$ be the restriction of $\mathcal
F_1\otimes\mathcal G_1$ on $V$. For any $t\in\mathbb R$, the
subquotient $\mathrm{sq}_{\mathcal H_1}^t(V)$ identifies with a
vector subspace of
\[\mathrm{sq}_{\mathcal F_1\otimes\mathcal G_1}^t(E\otimes F)=
\bigoplus_{a+b=t}\mathrm{sq}_{\mathcal F_1}^a(E)\otimes
\mathrm{sq}_{\mathcal G_1}^{b}(F).\] Moreover, the height of the
inclusion map $\mathrm{sq}^t_{\mathcal
H_1}(V)\rightarrow\mathrm{sq}^t_{\mathcal F_1\otimes\mathcal
G_1}(V)$ is non-positive. Since both filtrations $\mathcal F_1$ and
$\mathcal G_1$ are not degenerated, by Lemma \ref{Lem:semstable},
one has
\[\widehat{\deg}_{\mathrm{n}}(\mathrm{sq}_{\mathcal H_1}^t(\overline V))
\leqslant \rang(\mathrm{sq}_{\mathcal H_1}^t(V))\mathbb
E[\widetilde{\mathcal H}_t],\] where $\widetilde{\mathcal H}_t$ is
the restriction of the subquotient filtration of
$\mathrm{sq}_{\mathcal F_1\otimes\mathcal G_1}^t(E\otimes F)$
(induced by $\mathcal F\otimes\mathcal G$) on
$\mathrm{sq}^t_{\mathcal H_1}(V)$. By taking the sum with respect to
$t$, one obtains
\[\widehat{\deg}_{\mathrm{n}}(\overline V)\leqslant\rang(V)
\mathbb E[(\mathcal F\otimes\mathcal G)|_V].\] Hence
\[\hat{\mu}(\overline V)\leqslant\mathbb E[(\mathcal F\otimes\mathcal G)|_V]\leqslant -c\frac{\langle\mathcal F_1,\mathcal F\rangle+\langle\mathcal G_1,\mathcal G\rangle}{
(\|\mathcal F_1\|^2+\|\mathcal G_1\|^2)^{1/2}}+\mathbb E[\mathcal
F]+\mathbb E[\mathcal G].\] For any $t\in\mathbb R$, the filtration
$\mathcal F$ induces by subquotient a filtration on
$\mathrm{sq}_{\mathcal F_1}^t(E)$ which coincides with the
Harder-Narasimhan filtration of $\mathrm{sq}_{\mathcal
F_0}^t(\overline E)$. By \eqref{Equ:EG}, one has
\[\mathbb E[\mathcal F]=\sum_{t\in\mathbb R}\frac{\rang(\mathrm{sq}_{\mathcal F_1}^t(E))}{\rang(E)}\hat{\mu}(\mathrm{sq}_{\mathcal F_1}^t(\overline E))=\hat{\mu}(\overline E).\]
Similarly, one has $\mathbb E[\mathcal G]=\hat{\mu}(\overline
F)$. Moreover, by \eqref{Equ:innerFG}, one has
\[\begin{split}\langle\mathcal F_1,\mathcal F\rangle&=\sum_{t\in\mathbb R}
\frac{\rang(\mathrm{sq}_{\mathcal F_1}^t(E))}{\rang(E)}t\hat{\mu}(\mathrm{sq}_{\mathcal F_0}^t(\overline E))=\frac{1}{\rang(E)}\sum_{t\in\mathbb R}t\widehat{\deg}_{\mathrm{n}}(\mathrm{sq}_{\mathcal F_1}^t(\overline E))\\
&=-\frac{1}{\rang(E)}\int_{\mathbb
R}t\,\mathrm{d}\widehat{\deg}_{\mathrm{n}}(\mathcal F_1^t(\overline
E))=a\hat{\mu}(\overline
E)+\frac{1}{\rang(E)}\int_a^{+\infty}\widehat{\deg}_{\mathrm{n}}(\mathcal
F_1^t(\overline E))\,\mathrm{d}t,
\end{split}\]
where $a$ is a sufficiently negative number (such that $\mathcal
F_1^{a}(E)=E$). Since $\overline E$ is semistable, one obtains that
\[\langle\mathcal F_1,\mathcal F\rangle\leqslant a\hat{\mu}(E)+\hat{\mu}(E)\int_a^{\infty}\frac{\rang(\mathcal F_1^t(E))}{\rang(E)}\,\mathrm{d}t=\hat{\mu}(\overline E)\mathbb E[\mathcal F_1]=0.\]
Similarly, one has $\langle\mathcal G_1,\mathcal G\rangle \leqslant
0$. Therefore, $\hat{\mu}(\overline
V)\leqslant\hat{\mu}(\overline E)+\hat{\mu}(\overline F)$.
\end{proof}

\begin{theo}\label{Thm:recurrence}
Let $r\geqslant 1$ be an integer. Assume that
\begin{enumerate}[(1)]
\item $B(s)$ holds for any $s\in\{1,\ldots,r-1\}$,
\item for any stable hermitian vector bundles $\overline E$ and $\overline F$ over $\overline{\mathbb Q}$ such that $\rang(E)\geqslant 2$, $\rang(F)\geqslant 2$ and $\rang(E)\rang(F)\geqslant 2r$, and any vector subspace $V\subset E\otimes F$ of rank $r$ which is both-sided stable, one has
    \[\hat{\mu}({\overline V})\leqslant\hat{\mu}(\overline E)+\hat{\mu}(\overline F).\]
\end{enumerate}
Then the condition $B(r)$ holds.
\end{theo}
\begin{proof}
Let $\overline E$ and $\overline F$ be two hermitian vector bundles
over $\overline{\mathbb Q}$, and $V$ be a subspace of rank $r$ of
$E\otimes F$. Let $\mathcal A=(\mathbb N\setminus\{0\})^2$, equipped
with the order
\[(a_1,b_1)\leqslant (a_2,b_2)\text{ if and only if }a_1\leqslant a_2\text{ and }b_1\leqslant b_2.\] We shall prove the inequality \begin{equation}\label{Equ:muV1}\hat{\mu}(\overline V)\leqslant\hat{\mu}_{\max}(\overline E)+\hat{\mu}_{\max}(\overline F)\end{equation}
by induction on $(\rang(E),\rang(F))\in\mathcal A$. The case where
$\rang(E)=1$ or $\rang(F)=1$ is trivial. Assume that
\eqref{Equ:muV1} is verified for vector subspaces of rank $r$ in a
tensor product of hermitian vector bundles of ranks
$<(\rang(E),\rang(F))$. It suffices to treat the case where the
image of $V$ in $E$ (resp. $F$) equals $E$ (resp. $F$) since
otherwise the inequality \eqref{Equ:muV1} follows from the induction
hypothesis by replacing $\overline E$ and $\overline F$ by the
images of $V$ in $E$ and in $F$ (equipped with induced norms)
respectively.

If $\overline E$ is not stable, there exists a non-zero vector
subspace $E_1$ of $E$ such that $\overline E/\overline E_1$ is
stable and of slope $\leqslant\hat{\mu}(\overline E)$. Let
$V_1=V\cap (E_1\otimes F)$. One has $\rang(V_1)<\rang(V)$ (since $V$
is left stable, see Proposition \ref{Pro:semistable}). Hence (by the
hypothesis (1) of the theorem)
\[\ndeg(\overline V_1)\leqslant\rang(V_1)(\hat{\mu}_{\max}(\overline E_1)+\hat{\mu}_{\max}(\overline F))\leqslant\rang(V_1)(\hat{\mu}_{\max}(\overline E)+\hat{\mu}_{\max}(\overline F)).\]
Moreover, the quotient space $V/V_1$ identifies with a vector
subspace of $(E/E_1)\otimes F$, and the height of the inclusion map
is non-positive. Hence
\[\ndeg(\overline V/\overline V_1)\leqslant\rang(\overline V/\overline V_1)(\hat{\mu}_{\max}(\overline E/\overline E_1)+\hat{\mu}_{\max}(\overline F))\leqslant\rang(\overline V/\overline V_1)(\hat{\mu}_{\max}(\overline E)+\hat{\mu}_{\max}(\overline F)),\]
where we have used the hypothesis (1) of the theorem if $V_1\neq 0$,
and the induction hypothesis if $V_1=0$. The sum of the above two
inequalities gives
\[\ndeg(\overline V)\leqslant\rang(V)(\hat{\mu}_{\max}(\overline E)+\hat{\mu}_{\max}(\overline F)).\]
In a similar way, we can prove the same inequality in the case where
$\overline F$ is not stable.

In the following, we assume that both $\overline E$ and $\overline
F$ are stable. If $V$ is not both-sided
stable, by Proposition \ref{Pro:stable} we obtain the
inequality \eqref{Equ:muV1}.
By hypothesis (2) of the theorem, it remains the case where $V$ is both-sided stable and $r>\frac 12\rang(E)\rang(F)$. Let $W$ be the quotient space $(E\otimes F)/V$. One has $\rang(W)<\frac 12\rang(E)\rang(F)<r$. The dual space $W^\vee$ identifies with a subspace of $E^\vee\otimes F^\vee$. Thus induction hypothesis leads to the inequality $\hat{\mu}(\overline W^\vee)\leqslant\hat{\mu}(\overline E^\vee)+\hat{\mu}(\overline F^\vee)$ since $\overline E^\vee$ and $\overline F^\vee$ are stable. Consequently
\[\ndeg(\overline V)=\ndeg(\overline E\otimes\overline F)
-\ndeg(\overline W)\leqslant\ndeg(\overline E\otimes\overline
F)-\rang(W)(\hat{\mu}(\overline E)+\hat{\mu}(F)),\] which
implies $\hat{\mu}(\overline V)\leqslant \hat{\mu}(\overline
E)+\hat{\mu}(F)$.
\end{proof}

\section{Subbundles of small rank}\label{Sec:small rank}

This section is devoted to the proof of Theorem B. We will firstly
prove the assertion $B(r)$ for $r\leqslant 4$. The main tool is the
recursive argument that we have established in Theorem
\ref{Thm:recurrence}. Note that the assertion $B(1)$ has been proved
in Proposition \ref{Pro:majoration}.

\subsection{Subbundles of rank $2$}\label{SubSec:rang2}
In this subsection, we study planes in the tensor product of two
hermitian vector bundles over $\overline{\mathbb Q}$. The theorem
\ref{Thm:recurrence} permits us to reduce the problem to the
situation where the plane is both-sided stable in the tensor
product, which impose strong condition on the successive tensorial ranks of
the plan. We then apply the theorem of Zhang to obtain the desired
upper bound.

\begin{theo}\label{Thm:r2}
Let $\overline E$ and $\overline F$ be two hermitian vector bundles
over $\overline{\mathbb Q}$, and $V$ be a subspace of $E\otimes F$
which has rank $2$. Then one has \[\hat{\mu}(\overline V)
\leqslant\hat{\mu}_{\max}(\overline
E)+\hat{\mu}_{\max}(\overline F).\]
\end{theo}
\begin{proof}
By Theorem \ref{Thm:recurrence}, it suffices to treat the case where
$E$ and $F$ have ranks $\geqslant 2$, $V$ is both-sided stable, and
$\overline E$ and $\overline F$ are stable.

We claim that $\rho_{1}(V)\geqslant 2$. In fact, if
$\rho_{1}(V)=1$, then by Corollary \ref{Cor:leng} one obtains
\[1=\frac{2}{\rang(V)}<\frac{1}{\rang(E)}+\frac{1}{\rang(F)}\leqslant 1,\]
which leads to a contradiction. Thus one has
\[\rho_{1}(V)\rho_{2}(V)=4\geqslant \exp(2\ell(2))=\mathrm{e}.\]
By Proposition \ref{Pro:majo de mu}, we obtain the result.
\end{proof}

\subsection{Subbundles of rank $3$}

In this subsection, we study subbundles $V$ of rank $3$ in a tensor
product of two hermitian vector bundles over $\overline{\mathbb Q}$.
Just as the rank two case, the both-sided stability condition
imposes constraints to the successive tensorial ranks of $V$. In most
situations the theorem of Zhang permits us to conclude except two
cases where the successive tensorial ranks of $V$ are respectively $2,2,2$
and $2,2,3$. We shall treat these cases separately. The main idea is
to consider a suitably weighted average of the Zhang's upper bound
(\eqref{Equ:a2} and \eqref{Equ:aa}) for the Arakelov degree of
$\overline V$ and the upper bounds (\eqref{Equ:b1} and
\eqref{Equ:b}) obtained from the induction hypothesis.

\begin{theo}
Let $\overline E$ and $\overline F$ be two  hermitian vector bundles
over $\overline{\mathbb Q}$, and $V$ be a vector subspace of
$E\otimes F$ which has rank $3$. Then one has
\[\hat{\mu}(\overline V) \leqslant\hat{\mu}_{\max}(\overline
E)+\hat{\mu}_{\max}(\overline F).\]
\end{theo}
\begin{proof}
By Theorem \ref{Thm:recurrence}, it suffices to treat the case where
$E$ and $F$ have ranks $\geqslant 2$, $\rang(E)\rang(F)\geqslant 6$, $V$ is both-sided stable, and
$\overline E$ and $\overline F$ are stable.
In particular, at least one of the vector spaces $E$
and $F$ has rank $\geqslant 3$. By symmetry we may assume further
that $\rang(E)\geqslant\rang(F)$ (and hence $\rang(E)\geqslant 3$).
We claim that $\rho_{1}(V)\geqslant 2$, otherwise by Proposition
\ref{Pro:ell} one obtains
\[\frac{1}{3}=\frac{1}{\rang(V)}<\frac{1}{\rang(E)}\leqslant\frac 13,\]
which leads to a contradiction. Moreover, by Proposition
\ref{Pro:majo de mu}, the inequality
\[\hat{\mu}(\overline V)\leqslant\hat{\mu}_{\max}(\overline E)+\hat{\mu}_{\max}(\overline F).\]
holds once
\[\rho_{1}(V)\rho_{2}(V)\rho_{3}(V)\geqslant \exp(3\ell(3))=\mathrm{e}^{5/2}\approx 12.182493\ldots.\]
Hence it remains the following two cases.

\noindent 1) $\rho_{1}(V)=\rho_{2}(V)=\rho_{3}(V)=2$.

We fix a constant $\epsilon>0$ which is sufficiently small. By
Theorem \ref{Thm:Zhang} (see also Remark \ref{Rem:ssl}) we obtain
that there exist three lines $V_1$, $V_2$ and $V_3$ of tensorial rank $2$ in
$V$ such that $V=V_1+V_2+V_3$ and
\begin{equation}\label{Equ:a1}\ndeg(\overline V_1)+\ndeg(\overline V_2)+\ndeg(\overline V_3)\geqslant\ndeg(\overline V)-\frac{3}{2}\ell(3)-\epsilon.\end{equation}
Without loss of generality, we may assume $\ndeg(\overline
V_1)\geqslant\ndeg(\overline V_2)\geqslant\ndeg(\overline V_3)$. Let
$E_1$ and $F_1$ be the image of $V_1$ in $E$ and $F$ respectively.
The inequality \eqref{Equ:a1} implies
\begin{equation}\label{Equ:a2}\ndeg(\overline V)\leqslant 3\ndeg(\overline V_1)
+\frac32\ell(3)+\epsilon\leqslant 3\hat{\mu}(\overline E_1)
+3\hat{\mu}(\overline F_1)+\frac32\ell(3)-\frac
12\log(8)+\epsilon,\end{equation} where the second inequality comes
from Proposition \ref{Pro:majoration}.

The vector subspace $V$ being both-sided stable (hence it is left
stable), by Proposition \ref{Pro:semistable} one obtains
\[\frac{\rang(V\cap(E_1\otimes F))}{\rang(V)}<\frac{\rang(E_1)}{\rang(E)}\leqslant\frac{2}{3},\]
which implies that $\rang(V\cap(E_1\otimes F))=1$ (and therefore
$V\cap(E_1\otimes F)=V_1$). Thus we can identify $V/V_1$ with a
vector subspace of $(E/E_1)\otimes F$. Hence (by Theorem
\ref{Thm:r2})
\begin{equation*}\begin{split}
\ndeg(\overline V/\overline V_1)&\leqslant 2\hat{\mu}_{\max}(\overline E/\overline E_1)+2\hat{\mu}_{\max}(\overline F)\\
&\leqslant 6\hat{\mu}_{\max}(\overline
E)-4\hat{\mu}(\overline E_1)+2\hat{\mu}_{\max}(\overline F),
\end{split}
\end{equation*}
where the second inequality comes from Proposition
\ref{Pro:mumaxquot}. Therefore, by using the inequality (which
results from Proposition \ref{Pro:majoration})
\[\ndeg(\overline V_1)\leqslant\hat{\mu}(\overline E_1)+\hat{\mu}(\overline F_1)-\frac 12\log(2),\]
one obtains
\begin{equation}\label{Equ:b1}\ndeg(\overline V)\leqslant 6\hat{\mu}_{\max}(\overline E)-3\hat{\mu}(\overline E_1)+3\hat{\mu}_{\max}(\overline F)-\frac 12\log(2).\end{equation}
\eqref{Equ:a2}+\eqref{Equ:b1} gives
\[2\ndeg(\overline V)\leqslant 6\hat{\mu}_{\max}(\overline E)+6\hat{\mu}_{\max}(\overline F)+\frac32\ell(3)-\frac 12\log(8)-\frac 12\log(2)+\epsilon\]
Since $\frac{3}{2}\ell(3)-2\log(2)=-0.13629\ldots$, we obtain the result.
\vspace{3mm}

\noindent 2) $\rho_{1}(V)=\rho_{2}(V)=2$, $\rho_{3}(V)=3$.

We fix a constant $\epsilon>0$ which is sufficiently small. By
Theorem \ref{Thm:Zhang} (see also Remark \ref{Rem:ssl}) we obtain
that there exists three lines $V_1$, $V_2$ and $V_3$ in $V$ such
that $V=V_1+V_2+V_3$, $\rho(V_i)\geqslant\rho_{i}(V)$
($i\in\{1,2,3\}$) and
\begin{equation}\label{Equ:a}\ndeg(\overline V_1)+\ndeg(\overline V_2)+\ndeg(\overline V_3)\geqslant\ndeg(\overline V)-\frac32\ell(3)-\epsilon.\end{equation}
We may assume further that $\rho(V_1)=\rho(V_2)=2$ and $\rho(V_3)=3$
since the case where $\rho(V_1)=\rho(V_2)=\rho(V_3)=2$ has been
treated above.

Let $E_1$ and $F_1$ be the image of $V_1$ in $E$ and $F$
respectively. By Proposition \ref{Pro:majoration}, one has
\begin{gather*}
\ndeg(\overline V_1)\leqslant\hat{\mu}(\overline E_1)+\hat{\mu}_{\max}(\overline F)-\frac 12\log(2),\\
\ndeg(\overline V_2)\leqslant\hat{\mu}_{\max}(\overline E)+\hat{\mu}_{\max}(\overline F)-\frac 12\log(2),\\
\ndeg(\overline V_3)\leqslant\hat{\mu}_{\max}(\overline
E)+\hat{\mu}_{\max}(\overline F)-\frac 12\log(3).
\end{gather*}
Hence the inequality \eqref{Equ:a} implies
\begin{equation}\label{Equ:aa}\ndeg(\overline V)\leqslant \hat{\mu}(\overline E_1)
+2\hat{\mu}_{\max}(\overline E)+3\hat{\mu}_{\max}(\overline
F)+\frac32\ell(3)-\frac 12\log(12)+\epsilon.\end{equation} Similarly to
the case 1), the both-sided stability of $V$ implies that
$V\cap(E_1\otimes F)=V_1$ and thus $V/V_1$ identifies with a vector
subspace of $(E/E_1)\otimes F$. Hence
\begin{equation*}\begin{split}
\ndeg(\overline V/\overline V_1)&\leqslant 2\hat{\mu}_{\max}(\overline E/\overline E_1)+2\hat{\mu}_{\max}(\overline F)\\
&\leqslant 6\hat{\mu}_{\max}(\overline
E)-4\hat{\mu}(\overline E_1)+2\hat{\mu}_{\max}(\overline F),
\end{split}
\end{equation*}
where the second inequality comes from Proposition
\ref{Pro:mumaxquot}. Hence
\begin{equation}\label{Equ:b}\ndeg(\overline V)\leqslant 6\hat{\mu}_{\max}(\overline E)-3\hat{\mu}(\overline E_1)+3\hat{\mu}_{\max}(\overline F)-\frac 12\log(2).\end{equation}
Note that 3$\times$\eqref{Equ:aa}+\eqref{Equ:b} gives
\[4\ndeg(\overline V)\leqslant 12\hat{\mu}_{\max}(\overline E)+12\hat{\mu}_{\max}(\overline F)+\frac92\ell(3)-\frac{3}{2}\log(12)-\frac 12\log(2)+3\epsilon.\]
Since
\[\frac92\ell(3)-\frac{3}{2}\log(12)-\frac 12\log(2)=\frac{15}{4}-\frac{3}{2}\log(12)-\frac 12\log(2)=-0.323933\ldots,
\]
we obtain the desired result.
\end{proof}

\begin{rema}\label{Rem:conclure}
By Lemma \ref{Lem:dualite}, we obtain from the result of this subsection that, if $\overline E$ and $\overline F$ are hermitian vector bundles over $\overline{\mathbb Q}$ such that $\rang(E).\rang(F)\leqslant 6$, then for any non-zero subspace $V$ of $E\otimes F$, one has $\hat{\mu}(\overline V)\leqslant\hat{\mu}_{\max}(\overline E)+\hat{\mu}_{\max}(\overline F)$. In other words, the condition $B(r,m,n)$ holds for $mn\leqslant 6$ and for any $r$.
\end{rema}

\subsection{Subbundles of rank $4$}

Let $\overline E$ and $\overline F$ be two hermitian vector bundles
over $\overline{\mathbb Q}$, and $V$ a subspace of $E\otimes F$
which has rank $4$. Denote by $r=\rang(E)$ and $s=\rang(F)$. The aim of this section is to prove
$\hat{\mu}(\overline V)\leqslant\hat{\mu}_{\max}(\overline
E)+\hat{\mu}_{\max}(\overline F)$. By Theorem
\ref{Thm:recurrence}, we may assume in what follows that  $\overline
E$ and $\overline F$ are stable and have ranks $\geqslant 2$, $\rang(E)\rang(F)\geqslant 8$ and $V$ is both-sided stable.
Moreover, by Proposition \ref{Pro:majo de mu}, the required inequality holds
once
\[\rho_{1}(V)\rho_{2}(V)\rho_{3}(V)\rho_{4}(V)\geqslant \exp(4\ell(4))=\mathrm{e}^{13/3}\approx 76.19785\ldots.\]

In the following, we assume in addition that
\[
\rho_{1}(V)\rho_{2}(V)\rho_{3}(V)\rho_{4}(V)\leqslant 76.\]
In particular, one has $\rho_{1}(V)\leqslant 2$ (since
$3^4=81>76$). Without loss of generality, we assume
$\rang(E)\geqslant\rang(F)$. Hence $\rang(E)\geqslant 3$.

Compared to the rank $2$ or $3$ cases, the situation here is more
complicated. Firstly, the both-sided stability condition of $V$ does
not permit us to eliminate the case where $\rho_{1}(V)=1$. This
difficulty can be recovered by using the symmetry between $\overline
E$ and $\overline F$. Secondly, in the case where $\rho_{2}(V)=2$
and $r=s=3$, the devissage technic that we have applied in the
previous subsection does not provide the desired upper bound. Our
strategy is to prove the upper bound in a particular rank $5$ case
(see Lemma \ref{Lem:rank5}) and then use a duality argument to
reduced to the rank $4$ case.

The rest of this subsection is organized as follows. We firstly
treat the case where the vector space $V$ contains a line $V_1$
which has tensorial rank $1$. The crucial argument is to show that the
quotient space $V/V_1$ still have the geometric semistability. In
the second step, we  consider the case where $\rho_{1}(V)=2$. We
begin with an upper bound \eqref{Equ:4.0} of the Arakelov degree of
$\overline V$ by using the theorem of Zhang and the upper bound
\eqref{Equ:sudeg1} of the Arakelov degree of hermitian line
subbundle in a tensor product bundle. Then we establish upper bounds
\eqref{Equ:4.1}---\eqref{Equ:4.4} of $\ndeg(\overline V)$ by using
the induction hypothesis under diverse geometric conditions
(presented in items {\bf 1.} and {\bf 2.}). It turns out that a
suitable average of these two types of upper bounds give the desired
inequality in most situations (presented in items {\bf a.}, {\bf b.}
and {\bf c.}). The exceptional cases is where $r=s=3$, treated in
the item {\bf d.}, where we use the rank 5 trick, as explained
above.

\subsection*{Case 1: $\rho_{1}(V)=1$}
By Proposition \ref{Pro:ell} one obtains
$\rang(V)>\max(\rang(E),\rang(F))$. Hence the only possibility is
$\rang(E)=\rang(F)=3$. Let $V_1$ be a line of tensorial rank $1$ in $V$,
$E_1$ and $F_1$ be respectively the image of $V_1$ in $E$ and $F$.
Since $V$ is both-sided stable, by Proposition \ref{Pro:semistable}
one obtains
\[\rang(V\cap(E_1\otimes F))<\frac{\rang(V)\rang(E_1)}{\rang(E)}=\frac{4}{3}.\]
Hence $V\cap(E_1\otimes F)=V_1$. For the same reason, one has
$V\cap(E\otimes F_1)=V_1$. The quotient space $V/V_1$ then
identifies either with a vector subspace of $(E/E_1)\otimes F$, or a
vector subspace of $E\otimes (F/F_1)$. Moreover, by Proposition
\ref{Pro:semistable} one obtains that, for any subspace $E'$ of $E$
which has rank $2$, one has \[\rang(V\cap(E'\otimes
F))<\frac{\rang(V)\rang(E')}{\rang(E)}=8/3.\] Hence
$\rang(V\cap(E'\otimes F))\leqslant 2$. If in addition $E'$ contains
$E_1$, then \[\rang\big((V/V_1)\cap((E'/E_1)\otimes F)\big)\leqslant
1<\frac{\rang(V/V_1)\rang(E'/E_1)}{\rang(E/E_1)}=3/2.\] Hence (by
Proposition \ref{Pro:semistable}) $V/V_1$ is left stable as a vector
subspace of $(E/E_1)\otimes F$. For the same reason, $V/V_1$ is
right stable as a vector subspace of $E\otimes (F/F_1)$.

By the results obtained in previous subsections together with
Theorem \ref{Thm:semi} one obtains
\begin{gather}\label{Equ:v/v11}\ndeg(\overline V/\overline V_1)
\leqslant 3(\hat{\mu}(\overline E/\overline
E_1)+\hat{\mu}(\overline F))=
\frac{9}{2}\hat{\mu}(\overline E)+3\hat{\mu}(\overline F)-\frac32\hat{\mu}(\overline E_1),\\
\label{Equ:v/v12}\ndeg(\overline V/\overline V_1)\leqslant
3(\hat{\mu}(\overline E)+\hat{\mu}(\overline F/\overline
F_1))=\frac 92\hat{\mu}(\overline F)+3\hat{\mu}(\overline
E)-\frac 32\hat{\mu}(\overline F_1).\end{gather} Moreover, by
Proposition \ref{Pro:majoration} one has
\begin{equation}\label{Equ:muE1}
\ndeg(\overline V_1)\leqslant\hat{\mu}(\overline
E_1)+\hat{\mu}(\overline F_1).
\end{equation}
\eqref{Equ:v/v11}+\eqref{Equ:v/v12}+2$\times$\eqref{Equ:muE1} gives
\[2\ndeg(\overline V)\leqslant\frac{15}{2}(\hat{\mu}(\overline E)+\hat{\mu}(\overline F))+\frac 12(\hat{\mu}(\overline E)+\hat{\mu}(\overline F))\leqslant 8(\hat{\mu}(\overline E)+\hat{\mu}(\overline F)),\]
as required.

\subsection*{Case 2: $\rho_{1}(V)=2$.}
We fix a real number $\varepsilon>0$ which is sufficiently small. By
Theorem \ref{Thm:Zhang} (see also Remark \ref{Rem:ssl}), there
exists four lines $V_i$ ($i=1,2,3,4$) in $V$ which span $V$ as a
linear space over $\overline{\mathbb Q}$ and such that
\begin{enumerate}[(1)]
\item $\rho(V_1)\leqslant\rho(V_2)\leqslant\rho(V_3)\leqslant\rho(V_4)$,
\item for any $i\in\{1,2,3,4\}$, $\rho(V_i)\geqslant\rho_{i}(V)$,
\item $\ndeg(\overline V_1)+\ndeg(\overline V_2)+\ndeg(\overline V_3)+\ndeg(\overline V_4)\geqslant
    \ndeg(\overline V)-2\ell(4)-\varepsilon$.
\end{enumerate}
Let $a$ be the last index in $\{1,2,3,4\}$ such that $\rho(V_a)=2$.
By permuting the lines, we may assume that $\ndeg(\overline
V_1)\geqslant\ldots\geqslant\ndeg(\overline V_a)$.

Denote by $r$ and $s$ the rank of $E$ and $F$ over
$\overline{\mathbb Q}$ respectively. Remind that we have assumed
$r\geqslant s$ (and hence $r\geqslant 3$ since $rs\geqslant 8$).

We choose a line $V'$ in $V$ of tensorial rank $2$ such that
\begin{equation}\label{Equ:ndegV'}\ndeg(\overline V{}')\geqslant \sup_{M}\ndeg(\overline M)-\varepsilon,\end{equation}
where $M$ runs over all lines of tensorial rank $2$ in $V$. Let $E_1$ and
$F_1$ be respectively the images of $V'$ in $E$ and $F$. By
Proposition \ref{Pro:majoration}, one has
\begin{equation}\label{Equ:deg V'}\ndeg(\overline V{}')\leqslant\hat{\mu}(\overline E_1)
+\hat{\mu}(\overline F_1)-\frac 12\log(2).\end{equation} As a
consequence,
\begin{equation}\label{Equ:4.0}\begin{split}&\quad\;\ndeg(\overline V)\leqslant \sum_{i=1}^4\ndeg(\overline V_i)+2\ell(4)+\varepsilon\\
&\leqslant a(\hat{\mu}(\overline E_1)+\hat{\mu}(\overline F_1)-\log\sqrt{2}+\varepsilon)+(4-a)(\hat{\mu}(\overline E)+\hat{\mu}(\overline F)-\log\sqrt{3})+2\ell(4)+\varepsilon\\
&=a(\hat{\mu}(\overline E_1)+\hat{\mu}(\overline F_1))
+(4-a)(\hat{\mu}(\overline E)+\hat{\mu}(\overline F))
+a\log\sqrt{3/2}+4(\frac 12\ell(4)-\log\sqrt{3})+(a+1)\varepsilon\\
&\leqslant a(\hat{\mu}(\overline E_1)+\hat{\mu}(\overline
F_1)) +(4-a)(\hat{\mu}(\overline E)+\hat{\mu}(\overline F))
+a\log\sqrt{3/2}
\end{split}\end{equation}
provided that $\varepsilon$ is sufficiently small since
$\frac12\ell(4)-\log\sqrt{3}=-0.007639...<0$.

The rank of $E_1$ being $2$, $E$ contains $E_1$ strictly. Hence (by
Proposition \ref{Pro:ell}) one has
\begin{equation}\label{Equ:r}1\leqslant
\rang(V\cap(E_1\otimes
F))<\frac{\rang(V)\rang(E_1)}{\rang(E)}=\frac{8}{r},\end{equation}
which implies that $3\leqslant r\leqslant 7$. For the same reason,
one has $2\leqslant s\leqslant 7$. Moreover, \eqref{Equ:r} also
implies $\rang(V\cap(E_1\otimes F))\leqslant 2$, and
$\rang(V\cap(E_1\otimes F))=2$ only if $r=3$. The following are
several upper bounds of $\ndeg(\overline V)$ according to the values
of $\rang(V\cap(E_1\otimes F))$ and $\rang(V\cap(E\otimes F_1))$.

\vspace{2mm}\noindent{\bf 1.} Assume that $\rang(V\cap(E_1\otimes
F))=2$. Let $W$ be the intersection of $E_1\otimes F$ with $V$,
which has rank $2$. Since $\rho_{1}(\overline V)=2$, by Corollary
\ref{Cor:siegel} one has
\[\ndeg(\overline W)\leqslant 2(\ndeg(\overline V{}')+\varepsilon)+\ell(2)\leqslant 2(\hat{\mu}(\overline E_1)+
\hat{\mu}(\overline F_1)-\log\sqrt{2}+\frac 12\ell(2))+2\varepsilon,\]
where in the first inequality we have used \eqref{Equ:ndegV'}, and
the second inequality comes from \eqref{Equ:deg V'}. Note that
$\log\sqrt{2}-\frac 12\ell(2)=0.09657\ldots>0$. The quotient space $V/W$
identifies with a subspace of $(E/E_1)\otimes F$. Hence
\[\ndeg(\overline V/\overline W)\leqslant 2(\hat{\mu}(\overline E/\overline E_1)+\hat{\mu}(\overline F))=6\hat{\mu}(\overline E)
+2\hat{\mu}(\overline F)-4\hat{\mu}(\overline E_1),\] where
we have used the fact that $r=3$ (and hence $\rang(E/E_1)=1$).
Therefore
\begin{equation}\label{Equ:4.1}
\ndeg(\overline V)\leqslant 6\hat{\mu}(\overline
E)+2\hat{\mu}(\overline F)-2\hat{\mu}(\overline E_1)+2
\hat{\mu}(\overline F_1)-(\log(2)-\ell(2))+2\varepsilon.
\end{equation}
The same argument shows that, if $s\geqslant 3$ and if
$\deg_V(F_1)=2$ (and hence $s=3$), then
\begin{equation}\label{Equ:4.2}
\ndeg(\overline V)\leqslant 6\hat{\mu}(\overline
F)+2\hat{\mu}(\overline E)-2\hat{\mu}(\overline F_1)+2
\hat{\mu}(\overline E_1)-(\log(2)-\ell(2))+2\varepsilon.
\end{equation}

\vspace{2mm}\noindent{\bf 2.} Assume that $\rang(V\cap(E_1\otimes
F))=1$. Then $V\cap (E_1\otimes F)=V'$ and hence $V/V'$ identifies
with a subspace of $(E/E_1)\otimes F$. For any subspace $E'$ of $E$
such that $E_1\subsetneq E'\subsetneq E$, by Proposition
\ref{Pro:ell} one obtains
\[\rang(V\cap(E'\otimes F))<\frac{\rang(V)\rang(E')}{\rang(E)}=\frac{4r'}{r},\] where $r'$ is the rank of $E'$. Therefore \[\rang\big((V/V')\cap((E/E_1)\otimes F)\big)=\rang(V\cap(E'\otimes F))-1\leqslant \lceil 4r'/r\rceil-2,\]
which is no larger than
\[\frac{\rang(V/V')\rang(E'/E_1)}{\rang(E/E_1)}
=\frac{3(r'-2)}{r-2}.\]
%\marginpar{Ici je v\'erifie \`a la main. Mais un argument de combinatoire de r\'eseau qui calcule le nombre de point de r\'eseau \`a partir de l'aire du triangle devrait marcher.}
Hence $V/V_1$ is left semistable as a vector subspace of
$(E/E_1)\otimes F$. By the results that we have obtained in the
previous subsection, together with Theorem \ref{Thm:semi}, we obtain
\[\ndeg(\overline V/\overline V{}')\leqslant 3(\hat{\mu}(\overline E/\overline E_1)+\hat{\mu}(\overline F))=\frac{3r}{r-2}\hat{\mu}(\overline E)+3\hat{\mu}(\overline F)-\frac{6}{r-2}\hat{\mu}(\overline E_1).\]
By \eqref{Equ:deg V'}, we obtain
\begin{equation}\label{Equ:4.3}
\ndeg(\overline V)\leqslant\frac{3r}{r-2}\hat{\mu}(\overline
E)+3\hat{\mu}(\overline
F)-\frac{8-r}{r-2}\hat{\mu}(\overline
E_1)+\hat{\mu}(\overline F_1)-\log\sqrt{2}.
\end{equation}
Similarly, if $s\geqslant 3$ and if $\rang(V\cap (E\otimes F_1))=1$,
then
\begin{equation}\label{Equ:4.4}
\ndeg(\overline V)\leqslant\frac{3s}{s-2}\hat{\mu}(\overline
F)+3\hat{\mu}(\overline
E)-\frac{8-s}{s-2}\hat{\mu}(\overline
F_1)+\hat{\mu}(\overline E_1)-\log\sqrt{2}.
\end{equation}

We now show that these upper bounds imply the inequality
\[\ndeg(V)\leqslant 4(\hat{\mu}(\overline
E)+\hat{\mu}(\overline F)).\]

\vspace{2mm}\noindent{\bf a.} We first treat the case where
$\rang(V\cap (E_1\otimes F))=2$, which implies $r=3$. Since we have
assumed $rs\geqslant 8$ and $r\geqslant s$, one has $s=3$. If $\rang(V\cap(E\otimes
F_1))=2$, then the sum of \eqref{Equ:4.1} and \eqref{Equ:4.2} gives
\[2\ndeg(\overline V)\leqslant 8(\hat{\mu}(\overline E)+\hat{\mu}(\overline F))-2(\log(2)-\ell(2))+4\varepsilon
<8(\hat{\mu}(\overline E)+\hat{\mu}(\overline F))\] provided
that $\varepsilon$ is sufficiently small. If $\rang(V\cap(E\otimes
F_1))=1$, then
$3a\times$\eqref{Equ:4.1}+$2a\times$\eqref{Equ:4.4}+$4\times$\eqref{Equ:4.0}
gives
\[\begin{split}
&(5a+4)\ndeg(\overline V)\leqslant (20a+16)(\hat{\mu}(\overline
E)+\hat{\mu}(\overline F))+
4a\log\sqrt{3/2}\\&\qquad-3a(\log(2)-\ell(2))-a\log(2)+6a\varepsilon.
\end{split}\]
Since
\[4\log\sqrt{3/2}-3(\log(2)-\ell(2))-\log(2)=-0.4617\ldots<0,\]
we obtain the required result.

\vspace{2mm}\noindent{\bf b.} We then treat the case where
$r\geqslant 5$. Note that in this case $\rang(V\cap(E_1\otimes
F))=1$. Hence
$((8-r)/(r-2))\times$\eqref{Equ:4.0}+$a\times$\eqref{Equ:4.3} gives
\[\Big(\frac{8-r}{r-2}+a\Big)\ndeg(V)
\leqslant 4\Big(\frac{8-r}{r-2}+a\Big)(\hat{\mu}(\overline
E)+\hat{\mu}(\overline F))+\frac{a(8-r)}{2(r-2)}\log(3/2)
-\frac{a}{2}\log(2).\] Since $(8-r)/(r-2)\leqslant 1$ and
$\log(3/2)<\log(2)$, we obtain the required result.

\vspace{2mm}\noindent{\bf c.} Consider the case where $r\leqslant 4$
and $\rang(V\cap(E_1\otimes F))=1$. Note that in this case $s$
cannot be $2$ since otherwise $\rho_{1}(V)=1$ by a dimension
argument\footnote{The locus of lines of tensorial rank $1$ in $E\otimes F$
is a subvariety of dimension $(r-1)+(s-1)$ (since it is isomorphic
to $\mathbb P(E^\vee)\times\mathbb P(F^\vee)$) in $\mathbb
P(E^\vee\otimes F^\vee)$, which has dimension $rs-1$.  Any vector
subspace of rank $4$ in $E\otimes F$ defines a subvariety of
dimension $3$ in $\mathbb P(E^\vee\otimes F^\vee)$, which should
have non-empty intersection with the locus of lines of tensorial rank $1$ if
$r\leqslant 4$ and $s=2$.}.

If $\rang(V\cap(E\otimes F_1))=2$, then
$3a\times$\eqref{Equ:4.2}+$2(r-2)a\times$\eqref{Equ:4.3}+$2(5-r)
\times$\eqref{Equ:4.0} gives
\[\begin{split}
((2r-1&)a+(10-2r))\ndeg(\overline V)\leqslant 4((2r-1)a+(10-2r))
\\
&+2(5-r)a\log\sqrt{3/2}-3a(\log
2-\ell(2))-2(r-2)a\log\sqrt{2}+2a\varepsilon.
\end{split}\]
Since $r\geqslant 3$,
\[2(5-r)a\log\sqrt{3/2}-3a(\log 2-\ell(2))-2(r-2)a\log\sqrt{2}\leqslant (2\log(3/2)-2\log(2))a<0,\]
as required.

If $\rang(V\cap(E\otimes F_1))=1$, then
$(r-2)a\times$\eqref{Equ:4.3}+$(s-2)a\times$\eqref{Equ:4.4}+
$(10-r-s)\times$\eqref{Equ:4.0} gives
\[\begin{split}(a(r+s-&4)+10-r-s)\ndeg(\overline V)
\leqslant 4(a(r+s-4)+10-r-s)(\hat{\mu}(\overline E)+\hat{\mu}(\overline F))\\
&+a(10-r-s)\log\sqrt{3/2}-a(r+s-4)\log\sqrt{2}.
\end{split}\]
If $r+s\geqslant 7$, one has
\[(10-r-s)\log\sqrt{3/2}-(r+s-4)\log\sqrt{2}\leqslant 3\log\sqrt{3/2}-3\log(2)<0\]
as required.

\vspace{2mm}\noindent{\bf d.} It remains the case where $r=s=3$ and
$\rang(V\cap(E_1\otimes F))=\rang(V\cap (E\otimes F_1))=1$. Consider
the following lemma.

\begin{lemm}\label{Lem:rank5}
Let $\overline A$ and $\overline B$ be two stable hermitian vector
bundles of rank $3$, and $C$ be a vector subspace of rank $5$ of
$A\otimes B$. Then one has
\[\hat{\mu}(\overline C)\leqslant \hat{\mu}(\overline A)+\hat{\mu}(\overline B).\]
\end{lemm}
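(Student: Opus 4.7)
The plan is to mimic the strategy of the rank-$3$ and rank-$4$ proofs just carried out: reduce to the both-sided stable case, extract the forced intersection structure from dimension counting, and combine Zhang-type tensorial bounds with dévissage via a weighted average.

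First, I would reduce to $C$ both-sided stable. Since $\rang(A) = \rang(B) = 3$, Remark \ref{Rem:conclure} gives $B(s, n', m')$ for every $s$ whenever $(n', m') < (3,3)$ (equivalently $n'm' \leqslant 6$). Consequently Proposition \ref{Pro:stable}, applied to the stable bundles $\overline A$ and $\overline B$, shows that if $C$ is not both-sided stable then already $\hat{\mu}(\overline C) \leqslant \hat{\mu}(\overline A) + \hat{\mu}(\overline B)$. Henceforth assume $C$ is both-sided stable.

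Next, I would extract the forced intersection structure. For any rank-$2$ subspace $A_1 \subset A$, the rank-$6$ subspace $A_1 \otimes B$ of the rank-$9$ space $A \otimes B$ satisfies $\rang(C \cap (A_1 \otimes B)) \geqslant 5 + 6 - 9 = 2$ by dimension count, while Proposition \ref{Pro:semistable} combined with both-sided stability of $C$ forces $\rang(C \cap (A_1 \otimes B)) < 10/3$; hence this intersection has rank $2$ or $3$. Similarly, $\rang(C \cap (A_1 \otimes B)) \leqslant 1$ for any rank-$1$ $A_1$, with symmetric bounds on the $B$-side.

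The core of the proof is to combine two upper bounds. Zhang's Theorem \ref{Thm:Zhang} yields five lines $V_1, \ldots, V_5$ spanning $C$ with $\rho(V_i) \geqslant \rho_i(C)$ and $\sum_i \hat{\deg}(\overline V_i) \geqslant \hat{\deg}(\overline C) - \tfrac{5}{2}\ell(5) - \varepsilon$. Applying Proposition \ref{Pro:majoration} to each $V_i$ — and exploiting the \emph{strict} slope inequality $\hat{\mu}(\overline{A_1}) < \hat{\mu}(\overline A)$ for proper subbundles of the stable $\overline A$ (and symmetrically for $\overline B$) — furnishes a first, tensorial upper bound for $\hat{\deg}(\overline C)$ involving the corrections $-\tfrac{1}{2}\log\rho_i(C)$. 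In parallel, for a rank-$2$ subspace $A_1 \subset A$ selected according to the intersection structure above, the short exact sequence $0 \to C \cap (A_1 \otimes B) \to C \to C/(C \cap (A_1 \otimes B)) \to 0$ together with the established cases $B(s, n', m')$ for $s \leqslant 3$ and $n'm' \leqslant 6$ gives a second, dévissage-type upper bound. A suitably weighted average of the two bounds — in the spirit of the combinations \eqref{Equ:a2}$+$\eqref{Equ:b1} and \eqref{Equ:aa}$+$\eqref{Equ:b} used in the rank-$3$ and rank-$4$ proofs — should close the inequality.

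The main obstacle is that a single rank-$2$ dévissage on the $A$-side alone is insufficient: since $\overline A$ is stable, the quotient $\overline{A/A_1}$ satisfies $\hat{\mu}(\overline{A/A_1}) > \hat{\mu}(\overline A)$, so the resulting bound overshoots the target by a positive term proportional to $\hat{\mu}(\overline A) - \hat{\mu}(\overline{A_1})$. It will be essential to symmetrize the dévissage over both the $A$- and $B$-sides and to couple it with the tensorial-rank correction from the Zhang bound, so that the strict stability gaps of $\overline A$, $\overline B$ and the $-\tfrac{1}{2}\log\rho_i(C)$ terms together absorb the defect. If a pure weighted-average argument does not suffice, I would moreover appeal to the partial rank-$4$ inequalities (items a., b., c.\ of the rank-$4$ proof, which are independent of this lemma) applied to the orthogonal $C^\perp \subset A^\vee \otimes B^\vee$ of rank $4$, handling separately only the configuration where $C^\perp$ would fall into case d.
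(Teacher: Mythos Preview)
Your reduction to the both-sided stable case via Proposition~\ref{Pro:stable} and Remark~\ref{Rem:conclure} is correct and matches the paper. However, you miss the decisive geometric observation and consequently set up a much harder dévissage than necessary.

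The key point you overlook is that $C$ \emph{must contain a line of tensorial rank~$1$}. Indeed, the Segre variety $\mathbb{P}(A^\vee)\times\mathbb{P}(B^\vee)\hookrightarrow\mathbb{P}((A\otimes B)^\vee)$ has dimension $2+2=4$, hence codimension $4$ in $\mathbb{P}^8$; since $\mathbb{P}(C^\vee)$ has dimension $4$, the intersection is nonempty. The paper then takes such a line $C_1$ with images $A_1\subset A$ and $B_1\subset B$, both of rank~$1$. Both-sided stability forces $\rang(C\cap(A_1\otimes B))<5/3$, so $C\cap(A_1\otimes B)=C_1$, and $C/C_1$ becomes a rank-$4$ subspace of $(A/A_1)\otimes B$ with $\rang(A/A_1)=2$. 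A further application of Proposition~\ref{Pro:semistable} shows $C/C_1$ is left semistable there, and since $\rang(A/A_1)\cdot\rang(B)=6$, Theorem~\ref{Thm:semi} (whose proof only needs $B(s,n',m')$ for $n'm'\leqslant 6$, already established) gives $\ndeg(\overline C/\overline C_1)\leqslant 4(\hat\mu(\overline{A/A_1})+\hat\mu(\overline B))$. Adding $\ndeg(\overline C_1)\leqslant\hat\mu(\overline A_1)+\hat\mu(\overline B_1)$ yields one inequality; the symmetric inequality on the $B$-side cancels the stray $\pm(\hat\mu(\overline A_1)-\hat\mu(\overline B_1))$ terms upon averaging. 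No Zhang-type argument, no $\ell(5)$ constant, and no weighted combinations are needed.

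Your proposed route through rank-$2$ subspaces $A_1$ and five Zhang lines runs into exactly the obstacle you identify, and your fallback via $C^\perp$ is not a genuine escape: $C^\perp$ has rank $4$ in $A^\vee\otimes B^\vee$ with $\rang A^\vee=\rang B^\vee=3$, and the only rank-$4$ case you cannot borrow is precisely case~d., i.e.\ $r=s=3$ --- which is symmetric to the situation you started from. So the fallback reduces the problem to itself.
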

\begin{proof}
By Proposition \ref{Pro:stable} and Remark \ref{Rem:conclure}, we may assume that $C$ is
both-sided stable. Moreover, the locus of lines of tensorial rank in
$A\otimes B$ is a subvariety of codimension $4$ in $\mathbb
P(A^\vee\otimes B^{\vee})$, hence has a non-empty intersection with
$C$. Namely the vector space $C$ contains a line $C_1$ with tensorial rank
$1$. Let $A_1$ and $B_1$ be respectively the image of $C_1$ in $A$
and $B$. The stability of $C$ in $A\otimes B$ implies (see
Proposition \ref{Pro:semistable})
\[\rang(C\cap(A_1\otimes B))<\frac{\rang(C)\rang(A_1)}{\rang(A)}=\frac{5}{3},\]
and hence $C\cap(A_1\otimes B)=C_1$. Thus $C/C_1$ can be considered
as a vector subspace of $(A/A_1)\otimes B$. Moreover, if $A'$ is a
vector subspace of $A$ such that $A_1\subsetneq A'\subsetneq A$,
then (by Proposition \ref{Pro:semistable})
\[\rang(C\cap(A'\otimes B))<\frac{\rang(C)\rang(A')}{\rang(A)}=\frac{10}{3},\]
namely $\rang(C\cap(A'\otimes B))\leqslant 3$. This implies that
\[\rang\big((C/C_1)\cap((A'/A_1)\otimes B)\big)\leqslant 2=\frac{\rang(C/C_1)\rang(A'/A_1)}{\rang(A/A_1)}.\]
By Proposition \ref{Pro:semistable}, $C/C_1$ is left semistable.
Hence the result obtained above together with Theorem \ref{Thm:semi}
implies that
\[\ndeg(\overline C/\overline C_1)\leqslant 4(\hat{\mu}(\overline A/\overline A_1)+\hat{\mu}(\overline B))=6\hat{\mu}(\overline A)+
4\hat{\mu}(\overline B)-2\hat{\mu}(\overline A_1).\]
Combined with the estimate
\[\ndeg(\overline C_1)\leqslant\hat{\mu}(\overline A_1)+\hat{\mu}(\overline B_1)\]
this inequality leads to
\begin{equation}\label{Equ:degC1}\ndeg(\overline C)\leqslant 6\hat{\mu}(\overline A)+4
\hat{\mu}(\overline B)-\hat{\mu}(\overline
A_1)+\hat{\mu}(\overline B_1).\end{equation} Finally, by
symmetry one has
\begin{equation}\label{Equ:degC2}\ndeg(\overline C)\leqslant 4\hat{\mu}(\overline A)+6
\hat{\mu}(\overline B)+\hat{\mu}(\overline
A_1)-\hat{\mu}(\overline B_1).\end{equation} The average of
\eqref{Equ:degC1} and \eqref{Equ:degC2} gives the required
inequality.
\end{proof}

We apply the above lemma to $\overline A=\overline E^\vee$,
$\overline B=\overline F^\vee$ and $\overline C=((\overline
E\otimes\overline F)/V)^\vee$ to get
\[-\ndeg(\overline E\otimes\overline F)+\ndeg(\overline V)\leqslant -5(\hat{\mu}(\overline E)+\hat{\mu}(\overline F)).\]
Since $\ndeg(\overline E\otimes\overline
F)=9(\hat{\mu}(\overline E)+\hat{\mu}(\overline F))$, we
obtain the required result.

We conclude the article by the following result, which follows from
the first part of Theorem B and the above lemma and completes the
proof of Theorem B.

\begin{theo}
Let $\overline E$ and $\overline F$ be hermitian vector bundles of
rank $\leqslant 3$ over $\overline{\mathbb Q}$. One has
\[\hat{\mu}_{\max}(\overline E\otimes\overline F)=\hat{\mu}_{\max}(\overline E)+\hat{\mu}_{\max}(\overline F).\]
\end{theo}

\backmatter

\bibliography{chen,complJBB}
\bibliographystyle{smfplain}

\end{document}